\newtheorem{theorem}{Theorem}[section]
\newtheorem{lemma}{Lemma}[section]
\newtheorem{corollary}{Corollary}[section]
\newtheorem{conjecture}{Conjecture}[section]
\newtheorem{definition}{Definition}[section]
\newtheorem{question}{Question}[section]
\numberwithin{equation}{section}
\def\Z{\mathbb Z}
\def\R{\mathbb R}
\def\H{\mathbb H}
\def\d{\partial}
\def\a{\alpha}
\def\b{\beta}
\def\g{\gamma}
\def\s{\sigma}
\def\e{\epsilon}
\def\D{\Delta}
\title{Causal Holography in Application to the Inverse Scattering Problems}
\author{Gabriel Katz}
\address{MIT, Department of Mathematics, 77 Massachusetts Ave., Cambridge, MA 02139, U.S.A.}
\email{gabkatz@gmail.com}
\begin{document}
\maketitle
%\section{}
%\subsection{}

\begin{abstract} 
For a given smooth compact manifold $M$, we introduce an open class $\mathcal G(M)$ of Riemannian metrics, which we call \emph{metrics of the gradient type}. For such metrics $g$, the geodesic flow $v^g$ on the spherical tangent bundle $SM \to M$ admits a Lyapunov function (so the $v^g$-flow is traversing). It turns out, that metrics of the gradient type are exactly the non-trapping metrics. 

For every $g \in \mathcal G(M)$, the geodesic scattering along the boundary $\d M$ can be expressed in terms of the \emph{scattering map} $C_{v^g}: \d_1^+(SM) \to \d_1^-(SM)$. It acts from a domain $\d_1^+(SM)$ in the boundary $\d(SM)$ to the complementary domain $\d_1^-(SM)$, both domains being diffeomorphic.  We prove that, for a \emph{boundary generic} metric $g \in \mathcal G(M)$, the map $C_{v^g}$ allows for a reconstruction of $SM$ and of the geodesic foliation $\mathcal F(v^g)$ on it, up to a homeomorphism (often a diffeomorphism). 

Also, for such $g$,  the knowledge of the scattering map $C_{v^g}$ makes it possible to recover the homology of $M$, the Gromov simplicial semi-norm on it, and the fundamental group of $M$. Additionally, $C_{v^g}$ allows to reconstruct the naturally stratified topological type of the space of geodesics on $M$. 

We aim to understand the constraints on $(M, g)$, under which the scattering data allow for a reconstruction of $M$ and the metric $g$ on it, up to a natural action of the diffeomorphism group $\mathsf{Diff}(M, \d M)$. In particular, we consider 
a closed Riemannian $n$-manifold $(N, g)$ which is locally symmetric and of negative sectional curvature. Let $M$ is obtained from $N$ by removing an $n$-domain $U$, such that the metric $g|_M$ is \emph{boundary generic}, of the gradient type, and the homomorphism $\pi_1(U) \to \pi_1(N)$ of the fundamental groups is trivial. Then we prove that  the scattering map $C_{v^{g|_M}}$ makes it possible to recover $N$ and the metric $g$ on it. 
\end{abstract} 

\section{Introduction}

\noindent Let $M$ be a compact connected and smooth Riemannian $n$-manifold with boundary. In this paper, we apply the Holographic Causality Principle (\cite{K4}, Theorem 3.1) to the geodesic flow on the space $SM$ of unit tangent vectors on $M$.  
\smallskip

Our main observation is that the holographic causality is intimately linked to the classical \emph{inverse scattering problems}. So the geodesic scattering is the focus of our present investigation. \smallskip

Let us briefly explain what we mean by the \emph{scattering data} on a given compact connected Riemannian manifold $M$ with boundary $\d M$. For each geodesic curve $\g \subset M$ which ``enters" $M$ through a point $m \in \d M$ in the direction of an unitary tangent vector $u \in T_m(M)$, we register the first along $\g$ ``exit point" $m' \in \d M$ and the exit direction, given by a unitary tangent vector $u' \in T_{m'}(M)$ at $m'$. Of course, not for any geodesic $\g$ on $M$, this construction makes sense:  $\g \setminus m$ may belong to the interior of $M$. In such case, the geodesic $\g$ through $m \in \d M$ never reaches the boundary again.  

In any case, when available, we call the correspondence $\{(m, v) \Rightarrow (m', v')\}_{(m, v)}$ ``the metric-induced scattering data".\smallskip

We strive to restore the metric $g$ on $M$, up to the action of $M$-diffeomorphisms that are the identity maps on $\d M$, from the scattering data\footnote{This resembles the problem of reconstructing the mass distribution from the gravitational lensing.}. This restoration seems harder when $(M, g)$ has closed geodesics or geodesics that originate at a boundary point, but never reach the boundary $\d M$ again.  

In special cases, the restoration of $g$ is possible. This conclusion is very much inline with the results from \cite{Cr}, \cite{Cr1}, \cite{We}, as well as with \cite{SU}  - \cite{SU4} and \cite{SUV} - \cite{SUV3}. The recent paper \cite{SUV3}, which reflects the modern state of the art, contains the strongest results.  

Recall that there are examples of two analytic Riemannian manifolds with isometric boundaries and identical scattering (even lens) data, but with different $C^\infty$-jets of the metric tensors at the boundaries (see \cite{Zh}, Theorem 4.3)! However, these examples have \emph{trapped} geodesics; the metrics there are fundamentally different from the ones we study here.

Moving towards the goal of $g$-reconstruction from the scattering data, we introduce a class of metrics $g$ which we call \emph{metrics of the gradient type} (see Definition \ref{def9.1}). By Lemma \ref{lem9.2}, the the gradient type metrics are exactly the nontrapping metrics. In Theorem \ref{th9.0}, we prove that, given any compact connected Riemannian $n$-manifold $(N, g)$ with boundary that admits a $\e$-\emph{flat triangulation}, where $\e > 0$ is a universal constant that depends only on $\dim(N)$ (see Definition \ref{def9.2a}), it is possible to delete several smooth $n$-balls $\{B_\a\}$ from $N$, so that $M = N \setminus \big(\coprod_\a B_\a\big)$ is diffeomorphic to $N$, and the restriction $g|_M$ is of the gradient type. In particular, any connected $M$ with boundary admits a gradient type Riemannian metric $g$, provided that $M$ admits a $\e$-flat triangulation for a sufficiently small $\e >0$ and a different metric $\tilde g$. The gradient type metrics form an open nonempty set $\mathcal G(M)$ in the space $\mathcal R(M)$ of all Riemannian metrics on $M$ (Corollary \ref{cor9.3}).  \smallskip

Then we introduce another class of Riemannian metrics $g$ on $M$ such that the boundary $\d M$ is ``generically curved" in $g$ (see Definition \ref{def9.3}). We call such $g$ \emph{geodesically boundary generic}, or \emph{boundary generic} for short.  We denote by $\mathcal G^\dagger(M)$ the space of geodesically boundary generic metrics of the gradient type. We speculate (see Conjecture 2.2) that, for any $M$, the space $\mathcal G^\dagger(M)$ is open and dense in $\mathcal G(M)$ and prove that it is indeed open (see Theorem \ref{th9.1}). 

We also consider a subspace $\mathcal G^\ddagger(M)\subset \mathcal G^\dagger(M)$, formed by metrics $g$ for which the geodesic vector field $v^g$ on $SM$ is \emph{traversally generic} in the sense of Definition 3.2 from \cite{K2}. Again, $\mathcal G^\ddagger(M)$ is open in $\mathcal G^\dagger(M)$.
\smallskip

In Theorem \ref{th9.2}, the main result of this paper, we prove that, for a metric $g \in \mathcal {G^\dagger}(M)$,  the geodesic flow $v^g$ on $SM$ is \emph{topologically rigid} for given scattering data. This means that, when two scattering maps, $C_{v^{g_1}}$ and $C_{v^{g_2}}$, are conjugated with the help of a smooth diffeomorphism $\Phi^\d: \d(SM_1) \to  \d(SM_2)$, then the un-parametrized geodesic flows on $SM_1$ and $SM_2$ are conjugated with the help of an appropriate homeomorphism (often a diffeomorphism) $\Phi: SM_1 \to  SM_2$ which extends $\Phi^\d$.
\smallskip

In fact, for all metrics of the gradient type, the geodesic field $v^g$ on $SM$ allows for an arbitrary accurate $C^\infty$-\emph{approximations} by boundary generic and traversing (or even traversally generic) fields $w$ on $SM$.  For such $w$, the topological restoration of the $w$-induced $1$-dimensional oriented foliation $\mathcal F(w)$ on $SM$ from the new ``scattering data" $$C_w: \d_1^+SM(w) \to \d_1^-SM(w)$$ becomes possible. However, the difficulty is to find the approximating field $w$ in the form $v^{\tilde g}$ for some metric $\tilde g$ on $M$.
\smallskip

Let $\mathcal T(v^g)$ denote the \emph{space of geodesics} in $M$. In Theorem \ref{th9.3}, we prove that,  for any metric $g \in \mathcal {G^\dagger}(M)$, the scattering data are sufficient for a reconstruction of the stratified topological type of $\mathcal T(v^g)$. In general, $\mathcal T(v^g)$ is not a smooth manifold, but for $g \in \mathcal {G^\ddagger}(M)$, it is a compact $CW$-complex \cite{K4}. For $g \in \mathcal G^\dagger(M)$, the space $\mathcal T(v^g)$ carries some ``surrogate smooth structure" \cite{K4}. This structure is also captured by the scattering data. \smallskip

In Theorem \ref{th9.4}, we prove that, for any $g \in \mathcal {G^\dagger}(M)$, the geodesic scattering map $C_{v^g}$ allows for a reconstruction of the homology spaces $H_\ast(M; \R)$ and $H_\ast(M, \d M; \R)$, equipped with the Gromov simplicial semi-norms (see \cite{G} for the definition).  In particular, the simplicial volume of the relative fundamental cycle $[M, \d M]$ can be recovered from the scattering map. 

If $\dim M \geq 3$, the geodesic scattering map $C_{v^g}$ also allows for a reconstruction of  the fundamental group $\pi_1(M)$, together with all the homotopy groups $\{\pi_i(M)\}_{i < \dim M}$. Moreover, if the tangent bundle of $M$ is trivial, $C_{v^g}$ allows for a reconstruction of the stable topological type of the manifold $M$. \smallskip

Let $(N, g)$ be a closed smooth locally symmetric Riemannian $n$-manifold, $n \geq 3$, of negative sectional curvature. In Theorem \ref{th9.6}, we prove that, if  and $M$ is obtained from $N$ by removing the interior of a smooth $n$-ball so that $\tilde g =_{\mathsf{def}} g|_M$ is of the gradient type and boundary generic, then the knowledge of the scattering map $$C_{v^{\tilde g}}: S^{n-1}\times D^{n-1} \to S^{n-1}\times D^{n-1}$$ makes it possible to reconstruct $N$ and the metric $g$, up to a positive scalar factor. However, this result does not imply the possibility of reconstructing $\tilde g$ from $C_{v^{\tilde g}}$.
\smallskip

In Section 4, we study the inverse scattering problem in the presence of additional information about the lengths of geodesic curves that connect each point $(m, v) \in \d_1^+(SM)$ to the ``scattered" point $C_{v^g}(m, v) \in \d_1^-(SM)$. This information is commonly called ``\emph{the lens data}". Our main result here, Theorem \ref{th9.10}, claims the \emph{strong} topological rigidity (see Definition \ref{def9.4}) of the geodesic flow for given lens data. The proof of the theorem requires additional hypotheses about the metric $g$, which we call ``\emph{ballanced}" (see Definition \ref{def9.?}). We apply Theorem \ref{th9.10} to the case of manifolds $M$, obtained from closed Riemannian manifolds $(N, g)$ by removing special domains $U \subset N$. Then the scattering problem on $(M, g|_M)$ is intimately linked to the geodesic flow on $SN$ (see the ``Cut and Scatter" Theorem \ref{th9.11}).  By combining Theorem \ref{th9.11} with some classical results from \cite{CK}, \cite{BCG}, we are able to reconstruct $(N, g)$ from the scattering and lens data on $M$, provided that either $N$ is locally symmetric and of a negative sectional curvature, or admits a non-vanishing parallel vector field (see Corollary \ref{cor9.14} and Corollary \ref{cor9.15}).   
\smallskip

In general, our approach to the inverse scattering problem relies more on the methods of Differential Topology and Singularity Theory, and less on the more analytical methods of Differential Geometry and Operator Theory. Of course, this topological approach has its limitations: by itself it allows only for a reconstruction of the geodesic flow from the scattering and lens data.

The assumption that the Riemannian manifolds in this study are \emph{smooth} seems to be crucial for the effectiveness of our methods. Perhaps, similar results are valid under the weaker assumption that the $n$-manifolds we investigate have a $C^{2n}$-differentiable structure.  
\section{Boundary Generic Metrics of the Gradient Type}
 
Let $M$ be a compact $n$-dimensional smooth Riemannian manifold with boundary, and  $g$ a smooth Riemannian metric on $M$. Let $SM \to M$ denote the tangent spherical bundle. With the help of $g$, we may interpret the bundle $SM$ is a subbundle of the tangent bundle $TM$.  
\smallskip

The metric $g$ on $M$ induces a partially-defined one-parameter family of diffeomorphisms $\{\psi^t_g: SM \to SM\}$, the \emph{geodesic flow}. Each unit tangent vector $u$ at a point $m \in M \setminus \d M$ determines a unique geodesic curve $\g_{(m, u)} \subset M$ trough $m$ in the direction of $u$. When $m \in \d M$, the geodesic curve $\g_{(m, u)}$ is uniquely-defined for unit vectors $u \in T_mM$  that point inside of $M$.  

By definition,  $\psi^t_g(m, u)$ is the point $(m', u') \in SM$ such that the distance along $\g_{(m, u)}$ from $m' \in \g_{(m, u)}$ to $m$ is $t$, and $u'$ is the tangent vector to $\g_{(m, u)}$ at $m'$.  We stress that $\psi^t_g(m, u)$ may not be well-defined for all $t \in \R$ and all $u \in TM$: some geodesic curves $\g$ may reach the boundary $\d M$ in finite time, and some tangent vectors $u \in TM|_{\d M}$ may point outside of $M$. However, such constraints are common to our enterprise (\cite{K}, \cite{K1} - \cite{K5}), which deals with such boundary-induced complications. 
\smallskip

In the local coordinates $(x^1, \dots, x^n,\,  p^1, \dots p^n)$ on the tangent space $TM$, the equations of the geodesic flow are:
\begin{eqnarray}\label{eq9.5}
\dot x^\a & = & \sum_\b \; g_{\a\b}\,p^\b \nonumber \\
\dot p^\a & = & -\frac{1}{2} \sum_{\b, \g} \frac{\d g_{\b\g}(x)}{\d x^\a}\,p^\b p^\g, 
\end{eqnarray}
where $g_{\a\b}(x)$ is the metric tensor.  

This system can be rewritten in terms of the Hamiltonian function $$H^g(x, p) =_{\mathsf{def}}\, \frac{1}{2} \sum_{\a, \b}\, g_{\a\b}(x) p^\a p^\b$$---the kinetic energy---in the familiar Hamiltonian form: 
\begin{eqnarray}\label{eq9.6}
\dot x^\a & = & \;\, \frac{\d H^g}{\d p^\a} \nonumber \\
\dot p^\a & = & - \frac{\d H^g}{\d x^\a}
\end{eqnarray}

The projections of the trajectories of (\ref{eq9.5}) (or of (\ref{eq9.6})) on $M$ are the geodesic curves. 
\smallskip

Let  $v^g \in T(TM)$ be the field on the manifold $TM$, tangent to the trajectories of the geodesic flow $\Psi_t^g$ on $TM$. Note that $v^g$-flow is tangent to $SM \subset TM$,  so $v^g|_{SM} \neq 0$. 

In fact, the integral trajectories of $v^g$ are \emph{geodesic curves} in the Sasaki metric $\mathsf g = \mathsf g(g)$ on $T(TM)$ (\cite{Be}, Prop. 1.106).
\smallskip

Let $X$ be a compact smooth $(m+1)$-manifold with boundary. Any smooth vector field $v$ on $X$, which does not vanish along the boundary $\d X$, gives rise to a partition $\d_1^+X(v) \cup \d_1^-X(v)$ of the boundary $\d X$ into  two sets: the locus $\d_1^+X(v)$, where the field is directed inward of $X$ or is tangent to $\d X$, and  $\d_1^-X(v)$, where it is directed outwards or is tangent to $\d X$. We assume that $v|_{\d X}$, viewed as a section of the quotient  line bundle $T(X)/T(\d X)$ over $\d X$, is transversal to its zero section. This assumption implies that both sets $\d^\pm_1 X(v)$ are compact manifolds which share a common boundary $\d_2X(v) =_{\mathsf{def}} \d(\d_1^+X(v)) = \d(\d_1^-X(v))$. Evidently, $\d_2X(v)$ is the locus where $v$ is \emph{tangent} to the boundary $\d X$.

Morse has noticed (see \cite{Mo}) that, for a generic vector field $v$, the tangent locus $\d_2X(v)$ inherits a similar structure in connection to $\d_1^+X(v)$, as $\d X$ has in connection to $X$. That is, $v$ gives rise to a partition $\d_2^+X(v) \cup \d_2^-X(v)$ of  $\d_2X(v)$ into  two sets: the locus $\d_2^+X(v)$, where the field is directed inward of $\d_1^+X(v)$ or is tangent to $\d_2X(v)$, and  $\d_2^-X(v)$, where it is directed outward of $\d_1^+X(v)$ or is tangent to $\d_2X(v)$. Again, we assume that $v|_{\d_2X(v)}$, viewed as a section of the quotient  line bundle $T(\d X)/T(\d_2X(v))$ over $\d_2X(v)$, is transversal to its zero section.

For, so called, \emph{boundary generic} vector fields (see \cite{K1} for a formal definition), this structure replicates itself: the cuspidal locus $\d_3X(v)$ is defined as the locus where $v$ is tangent to $\d_2X(v)$; $\d_3X(v)$ is divided into two manifolds, $\d_3^+X(v)$ and $\d_3^-X(v)$. In  $\d_3^+X(v)$, the field is directed inward of $\d_2^+X(v)$ or is tangent to its boundary, in  $\d_3^-X(v)$, outward of $\d_2^+X(v)$ or is tangent to its boundary. We can repeat this construction until we reach the zero-dimensional stratum $\d_{m+1}X(v) = \d_{m+1}^+X(v) \cup  \d_{m+1}^-X(v)$.  

To achieve some uniformity in the notations, put $\d_0^+X =_{\mathsf{def}} X$ and $\d_1X =_{\mathsf{def}} \d X$. 

Thus a boundary generic vector field $v$  on $X$  gives rise to two stratifications: 
\begin{eqnarray}\label{eq2.0}
\d X =_{\mathsf{def}} \d_1X \supset \d_2X(v) \supset \dots \supset \d_{m +1}X(v), \nonumber \\ 
X =_{\mathsf{def}} \d_0^+ X \supset \d_1^+X(v) \supset \d_2^+X(v) \supset \dots \supset \d_{m+1}^+X(v),  \nonumber 
\end{eqnarray}
the first one by closed smooth submanifolds, the second one---by compact ones.  Here $\dim(X) = m+1$, and $\dim(\d_jX(v)) = \dim(\d_j^+X(v)) = m +1 - j$. 

We will use often the notation ``$\d_j^\pm X$" instead of ``$\d_j^\pm X(v)$" when the vector field $v$ is fixed or its choice is obvious. \smallskip

As any non-vanishing vector field, the geodesic field $v^g \in T(SM)$ divides the boundary $\d(SM)$ into two portions: $\d_1^+(SM)$, where $v^g$ points inside of $SM$ or is tangent to its boundary, and $\d_1^-(SM)$, where it points outside of $SM$ or is tangent to its boundary. In fact, $\d_1^+(SM)$ and $\d_1^-(SM)$ do not depend on $g$: the first locus is formed by the pairs $(m, v)$, where $m \in \d M$ and $v \in T_mM$ points inside of $M$ or is tangent to $\d M$. Therefore, both $\d_1^+(SM)$ and $\d_1^-(SM)$ are homeomorphic to the tangent $(n-1)$-disk bundle of the manifold $\d M$. The locus $\d_2(SM) = \d(\d_1^+(SM)) = \d(\d_1^-(SM))$ is also $g$-independent; it is the space of the sphere bundle, associated with the tangent $(n-1)$-bundle $T(\d M)$.

\begin{definition}\label{def9.1} Let $(M, g)$ be a compact connected smooth Riemannian manifold with boundary.

We say that a metric $g$ on $M$ is of the \emph{gradient type} if the vector field $v^g \in T(SM)$ that governs the geodesic flow is of the gradient type: that is, there exists a smooth function $F: SM \to \R$ such that $dF(v^g) > 0$. 

This condition is equivalent to the property $\{\tilde F, H^g\} > 0$, where $\tilde F: TM \setminus M \to \R$ is a smooth extension of $F$, the Hamiltonian $H^g$ is defined by equations (\ref{eq9.5}) and (\ref{eq9.6}), and $\{ , \}$ stands for the Poisson bracket of functions on $TM$. 
\smallskip

We denote by $\mathcal G(M)$ the space of the gradient-type Riemannian metrics on $M$. \hfill $\diamondsuit$
 \end{definition}
 
\begin{figure}[ht]\label{fig9.2}
\centerline{\includegraphics[height=1.8in,width=2in]{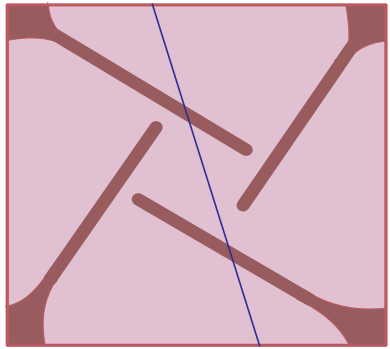}}
\bigskip
\caption{\small{The flat metric on torus becomes of the gradient type on the complement to a curvy disk (whose ``center" is at the corners of the square fundamental domain).}} 
\end{figure}

\smallskip

\noindent {\bf Example 2.1.} 
%\begin{example}\label{ex9.1} 
Consider a flat metric $g$ on the torus $T^2 = \R^2/\Z^2$ and form a punctured torus $M$ by removing an open disk $D^2$ from $T^2$. If $D^2$ is convex in the fundamental square domain $Q^2 \subset \R^2$, then there exist closed geodesics (with a rational slope with respect to the lattice $\Z^2$) that miss $D^2$. For such $M \subset T^2$, the flat metric is not of the gradient type. 

However, it is possible to position $D^2 \subset T^2$ so that its lift $\tilde D^2$ to $\R^2$ will have intersections with \emph{any} line that passes through $Q^2$ (see Figure 1).  
We restrict the flat metric $g$ to $M = T^2 \setminus D^2$. For such a choice of $(M, g|_M)$, thanks to Lemma \ref{lem9.2} below, the metric $g|_M$ is of the gradient type. Moreover, by Theorem \ref{th9.1}, any metric $g'$ on $M$, sufficiently close to this flat metric $g|_M$, is also of the gradient type. \hfill $\diamondsuit$
%\end{example}

\begin{lemma}\label{lem9.1} The set $\mathcal G(M)$ of Riemannian metrics $g$ of the gradient type is \emph{open} in the space $\mathcal R(M)$ of all Riemannian metrics on $M$, considered in the $C^\infty$-topology. 
\end{lemma}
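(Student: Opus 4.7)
The plan is to exploit the Hamiltonian reformulation already given in the definition and work on the fixed ambient manifold $TM \setminus M$, thereby sidestepping the nuisance that the sphere bundle $S_g M \subset TM$ itself depends on $g$. Fix $g_0 \in \mathcal G(M)$ with a Lyapunov function $F \colon SM \to \R$, and let $\tilde F \colon TM \setminus M \to \R$ be a smooth extension of $F$ (for instance one may take $\tilde F$ to be $F$ composed with the radial projection onto the unit sphere bundle of some fixed auxiliary background metric). The hypothesis $dF(v^{g_0}) > 0$ on $SM(g_0)$ is equivalent to the inequality $\{\tilde F, H^{g_0}\} > 0$ along the compact submanifold $SM(g_0) \subset TM \setminus M$.

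The first step is a compactness/continuity argument on $TM \setminus M$. Since $SM(g_0)$ is compact and $\{\tilde F, H^{g_0}\}$ is continuous and positive there, it is bounded below by some $\delta > 0$; by continuity, $\{\tilde F, H^{g_0}\} > \delta/2$ on some open neighborhood $U \subset TM \setminus M$ of $SM(g_0)$. The second step is to observe that the map $g \mapsto H^g = \tfrac{1}{2}\sum_{\a\b} g_{\a\b}(x)\, p^\a p^\b$ is continuous from $\mathcal R(M)$ (with its $C^\infty$-topology) into $C^\infty(TM)$ (restricted to any compact subset), and Poisson bracketing with the fixed $\tilde F$ involves only first derivatives in $x$ and $p$. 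Hence, for any $g$ sufficiently $C^2$-close to $g_0$, the function $\{\tilde F, H^{g}\}$ is $C^0$-close to $\{\tilde F, H^{g_0}\}$ uniformly on the compact closure of $U$, so in particular
\begin{equation}
\{\tilde F, H^{g}\} > \delta/4 \quad \text{on } U. \nonumber
\end{equation}

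The third step addresses the moving sphere bundle: since $S_g M$ is cut out of $TM \setminus M$ by the equation $H^g = 1/2$, and this hypersurface varies continuously with $g$, there is a neighborhood $\mathcal V \subset \mathcal R(M)$ of $g_0$ such that $S_g M \subset U$ for every $g \in \mathcal V$. Combining the two, for $g \in \mathcal V$ the Poisson bracket $\{\tilde F, H^{g}\}$ is positive along $S_g M$, which is exactly $dF_g(v^g) > 0$ where $F_g \colon S_g M \to \R$ is the restriction of $\tilde F$. Thus $g \in \mathcal G(M)$, proving openness.

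The main (mild) obstacle is the bookkeeping around the $g$-dependence of $SM$; this is dissolved by the Hamiltonian/Poisson formulation, which lets one check the Lyapunov condition on a $g$-independent open set $U \subset TM \setminus M$ rather than on the moving hypersurface itself. The continuity of $g \mapsto H^g$ and $g \mapsto S_gM$ in the $C^\infty$-topology then does all the real work.
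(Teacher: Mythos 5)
Your argument is correct and follows essentially the same route as the paper: extend $F$ to a function $\tilde F$ on a neighborhood $U$ of $SM$ in $TM \setminus \s(M)$, observe by continuity and compactness that the Lyapunov inequality persists on $U$ for nearby metrics, and then note that the perturbed unit sphere bundle $S_{g}M$ still lies inside $U$. The only cosmetic difference is that you phrase the openness inequality via the Poisson bracket $\{\tilde F, H^g\} > 0$ while the paper writes $d\tilde F(v^{g'}) > 0$ directly; Definition~\ref{def9.1} already records that these are the same condition, so this is a notational variant rather than a different strategy.
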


\begin{proof} Let $\s: M \to TM$ denote the zero section. If $dF(v^g) > 0$ on $SM$, then $F: SM \to \R$ extends in a compact neighborhood $U$ of $SM$ in $TM \setminus \s(M)$ to a smooth function $\tilde F: U \to \R$ so that $d\tilde F(v^g) > 0$ in $U$. In this neighborhood, $d\tilde F(v^{g'})|_U > 0$ for all metrics $g'$, sufficiently close to $g$. For such metrics $g'$, the space of unit spheres $S'M \subset TM$ is \emph{fiberwise} close to $SM \subset TM$; in particular, we may assume that $S'M \subset U$. Recall that  the geodesic field $v^{g'}$ is tangent to $S'M$, thus $d\tilde F(v^{g'}) > 0$ on $S'M$.
\end{proof}

\begin{definition}\label{def9.1a} A Riemannian metric $g$ on a connected compact manifold $M$ with boundary is called \emph{non-trapping} if $(M, g)$ has no closed geodesics and no geodesics of infinite length (the later are homeomorphic to an open or a semi-open interval). \hfill $\diamondsuit$ 
\end{definition}
\smallskip

\begin{lemma}\label{lem9.2} Let $M$ be a compact connected smooth manifold with boundary. A metric $g$ on $M$ is of the gradient type, if and only if, any trajectory of the geodesic flow is homeomorphic to a closed interval or to a singleton. 

In other words, the non-trapping metrics and the metrics of the gradient type are the same.
 \end{lemma}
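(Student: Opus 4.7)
The forward implication (gradient type $\Rightarrow$ non-trapping) is a short compactness argument. Given $F \in C^\infty(SM)$ with $dF(v^g) > 0$, the function $F$ is strictly monotone along each $v^g$-trajectory, so no closed trajectory can exist. If some forward trajectory $\g : [0,\infty) \to SM$ remained in $SM$ for all positive time, compactness of $SM$ would produce a non-empty closed $v^g$-invariant $\omega$-limit set $\Omega$. Any full $v^g$-orbit inside $\Omega$ would lie on the single level $\{F = \lim_{t \to \infty} F(\g(t))\}$, contradicting $dF(v^g) > 0$ at any point of that orbit. A symmetric argument rules out backward semi-infinite trajectories, so every $v^g$-trajectory is homeomorphic to a closed interval (possibly degenerate to a singleton on $\d_1^+(SM) \cap \d_1^-(SM)$).

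\smallskip

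\noindent For the converse, assume every $v^g$-trajectory is a compact arc. The task reduces to the general fact that a non-vanishing smooth vector field $w$ on a compact manifold $X$ with boundary, all of whose trajectories are compact arcs, admits a smooth Lyapunov function. I would prove this by first extending $v^g$ to a non-vanishing smooth vector field $\tilde v$ on an open manifold $W \supset SM$, using that $v^g \neq 0$ on the compact $SM$. Continuity and the uniform upper bound on trajectory time-lengths in $SM$ allow one to shrink $W$ so that every $\tilde v$-orbit meeting $SM$ is itself a compact arc, with bounded total flow-time in $W$ and no recurrence. Cover $SM$ by finitely many flow boxes $U_i \subset W$ with local flow-time coordinates $\tau_i$ (satisfying $d\tau_i(\tilde v) = 1$); on each overlap $U_i \cap U_j$ the difference $\tau_i - \tau_j$ is constant along orbits. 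The absence of closed orbits (the arcs are not loops) makes this cocycle a coboundary, so shifting each $\tau_i$ by a constant yields a globally well-defined smooth function $F$ on a neighborhood of $SM$ with $dF(\tilde v) \equiv 1$. Restricting $F$ to $SM$ gives the desired Lyapunov function for $v^g$.

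\smallskip

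\noindent \textbf{The main obstacle} is producing a smooth Lyapunov function near the boundary-tangency locus $\d_2(SM)$: the intrinsic candidate $\tau_+ : SM \to \R$ (time-to-exit) is only continuous there, since nearby trajectories can cease being tangent and exit through drastically different times. Working in the ambient open $W$, where $\d(SM)$ plays no special role for the flow of $\tilde v$, removes this obstruction, and smoothness is preserved on restriction to $SM$. A secondary technical point is verifying triviality of the $\{\tau_i - \tau_j\}$-cocycle, which again uses that orbits are arcs rather than loops and parallels the author's earlier treatment of traversing fields in \cite{K1}--\cite{K5}.
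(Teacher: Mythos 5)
Your forward implication is sound and is essentially the paper's argument: you replace the paper's ``take the closure $K$ of a trapped orbit and look at a maximum of $F$ on $K$'' by ``look at the $\omega$-limit set, which lies on a single $F$-level''; these are interchangeable compactness arguments, and both correctly kill closed orbits and semi-infinite orbits.

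The converse is where there is a real gap. The paper does not reprove this direction; it simply cites Lemma~5.6 of \cite{K1} (``any traversing field is of the gradient type''), which is a non-trivial result in its own right. Your flow-box cocycle construction does not establish it. On $U_i\cap U_j$ the difference $\tau_i-\tau_j$ is indeed annihilated by $\mathcal L_{\tilde v}$, but it is \emph{not} a constant: it is a function on the local leaf space of the flow in $U_i\cap U_j$, and it genuinely varies from orbit to orbit (already for $\tilde v=\d/\d x$ on $\R^2$ with $\tau_1=x$, $\tau_2=x+y$ you get $\tau_1-\tau_2=-y$). So $\{\tau_i-\tau_j\}$ is not a \v Cech $1$-cocycle with coefficients in the constant sheaf $\R$, and ``shifting each $\tau_i$ by a constant'' cannot patch them. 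To patch them you must shift each $\tau_i$ by a smooth $c_i$ with $dc_i(\tilde v)=0$ solving $c_i-c_j=\tau_j-\tau_i$; this is a cocycle with coefficients in the sheaf of smooth first integrals of $\tilde v$, and the standard partition-of-unity trick that trivializes smooth-function cocycles fails here, because multiplying a first integral by a bump function destroys the first-integral property (equivalently, you would end up with $dF(\tilde v)=1+\sum_k d\rho_k(\tilde v)(\tau_i-\tau_k)$, which need not be positive). You would need partitions of unity on the trajectory space itself, and that space is generally a singular, non-manifold stratified object; controlling smoothness of orbit-constant functions across the tangency loci $\d_j(SM)$ is precisely the technical difficulty, not something that passing to an ambient $W$ makes disappear. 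The absence of closed orbits rules out holonomy around loops, but does nothing to cancel the transverse variation of $\tau_i-\tau_j$. If you want a self-contained proof rather than a citation, the robust routes are Conley's construction of a complete Lyapunov function for a flow with empty chain-recurrent set followed by smoothing, or the argument in \cite{K1}, Lemma 5.6 --- neither reduces to a flow-box patching.
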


\begin{proof} If, for a smooth function $F$, $d F(v^{g}) > 0$, then each $v^g$-trajectory is singleton residing in $\d(SM)$ or a closed segment with its both ends residing in $\d(SM)$  (we call such vector fields \emph{traversing}). 

Evidently, $F$ prevents $v^g$ from having a closed trajectory. Let $\tilde\g$ be a trajectory that starts at a point $w \in SM$ and is homeomorphic to a semi-open interval. So $\tilde\g $ extends beyond any point on $\tilde\g$ that can be reached from $w$ ($\tilde\g$ cannot ``exit" $SM$ in a finite time). Consider the closure $K$ of $\tilde\g$. It is a compact and $v^g$-invariant set. So $F$ attends its maximum at a point $w_\star \in K$. However, $dF(v^g) > 0$ at $w_\star$ and a germ of a $v^g$-trajectory through $w_\star$ belongs to $K$, a contradiction to the assumption that $F$ attends its maximum at $w_\star$ .  

A similar argument rules out the $(-v^g)$-trajectories that are homeomorphic to a semi-open interval. \smallskip 

Conversely, by Lemma 5.6 from \cite{K1}, any traversing field is of the gradient type. So $v^g$ admits a Lyapunov function $F$.   

Thus $g$ is of the gradient type if and only if the image of any $v^g$-trajectory $\tilde\g$ under the map $SM \to M$ is either a singleton in $\d M$ or a compact geodesic curve $\g$ whose ends reside in $\d M$ (by our convention, $\g$ does not extends beyond its two ends). In particular, any $g \in \mathcal G(M)$ has no closed geodesics in $M$ and no geodesics that originate at the boundary $\d M$ and are trapped in $\textup{int}(M)$ for all positive times.
\end{proof}

\begin{corollary}\label{cor9.1} Let $(N, g)$ be an open Riemannian manifold such that no geodesic curve in $N$ is closed or has an end that is contained in a compact set. Let $M \subset N$ be a smooth compact codimension zero submanifold. Then the restriction $g|_M$ is a metric of the gradient type, and so are all the metrics $\tilde g$ on $M$ that are sufficiently close to $g|_M$. \smallskip

In particular, for any compact domain $M$ with a smooth boundary in the Euclidean space $\mathsf E^n$ or in the hyperbolic space $\mathsf H^n$, the Euclidean metric  $g_{\mathsf E}$ or the hyperbolic metric $g_{\mathsf H}$ on $M$ are of the gradient type, and so are all the metrics $\tilde g$ that are sufficiently close to $g_{\mathsf E}$ or $g_{\mathsf H}$, respectively.
\end{corollary}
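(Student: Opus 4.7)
The plan is to deduce the result directly from Lemma \ref{lem9.2} (the equivalence between ``gradient type'' and ``non-trapping'') and Lemma \ref{lem9.1} (openness of $\mathcal G(M)$ in $\mathcal R(M)$).

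First I would verify that $g|_M$ is non-trapping in the sense of Definition \ref{def9.1a}. Since $M$ has codimension zero in $N$, the Levi-Civita connection of $g|_M$ on $\mathrm{int}(M)$ coincides with that of $g$, so every maximal geodesic of $(M, g|_M)$ is either a singleton on $\d M$ or the initial portion of a geodesic of $(N, g)$ that has been cut off at the moment when it first meets $\d M$. Suppose, toward a contradiction, that $g|_M$ is trapping. If there is a closed geodesic $\g \subset M$, then $\g$ is automatically a closed geodesic of $N$, contradicting the hypothesis. Otherwise, there exists a maximal geodesic $\g$ of $(M, g|_M)$ whose parameter interval is not compact (say, of the form $[0, T)$) and which does not reach $\d M$ in forward time. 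The corresponding $v^{g|_M}$-trajectory $\tilde\g$ lies in the compact set $SM \subset SN$, so the $v^g$-flow on $SN$ starting at its initial point cannot blow up while remaining in $SM$ and therefore extends for all $t \in [0, \infty)$; the resulting geodesic $\hat\g$ in $N$ is defined on the whole $[0, \infty)$ and stays inside $M$. Hence its positive end is contained in the compact set $M$, contradicting the hypothesis on $N$. The symmetric argument handles the backward direction. By Lemma \ref{lem9.2}, we conclude $g|_M \in \mathcal G(M)$.

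The openness assertion is then immediate: Lemma \ref{lem9.1} states that $\mathcal G(M)$ is open in $\mathcal R(M)$ in the $C^\infty$-topology, so every $\tilde g$ sufficiently close to $g|_M$ also lies in $\mathcal G(M)$. For the specific cases $N = \mathsf E^n$ and $N = \mathsf H^n$, geodesics are complete lines (straight lines in $\mathsf E^n$, the familiar geodesic lines in $\mathsf H^n$), none of which is closed, and each is parametrized by the whole of $\R$ with both ends escaping every compact subset of $N$. The hypothesis of the first part of the corollary is thus satisfied, and the conclusion applies to any compact smoothly-bounded $M \subset N$.

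The main obstacle I expect is the step where a ``trapped'' geodesic of $(M, g|_M)$ is upgraded to a semi-infinite geodesic of $(N, g)$: since $N$ is only assumed to be open (not geodesically complete), one cannot invoke completeness. The key is that the compactness of $SM \subset SN$ prevents the $v^g$-integral curve from escaping in finite time as long as it remains inside $SM$ — which is precisely the situation we wish to contradict. Once this point is secured, the two ways in which trapping can occur (a closed geodesic, or a maximal geodesic with a non-compact parameter interval) translate, under the extension to $N$, into the two phenomena (a closed geodesic of $N$, or a geodesic of $N$ with an end trapped in the compact set $M$) excluded by hypothesis.
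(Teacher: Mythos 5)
Your proof is correct and follows essentially the same route as the paper: verify the non-trapping condition and invoke Lemma~\ref{lem9.2} together with the openness from Lemma~\ref{lem9.1}. The paper compresses the verification into one sentence (``Using the hypotheses, no positive time geodesic $\g \subset M$ $\ldots$''), whereas you make explicit the escape-lemma argument showing that a geodesic trapped in the compact set $SM$ must extend to a semi-infinite geodesic of $(N,g)$ with an end in $M$, which is a welcome clarification rather than a different method.
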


\begin{proof} Using the hypotheses, no positive time geodesic $\g \subset M$ in the metric $g|_M$ is an image of a semi-open interval or a closed loop. By Lemma \ref{lem9.2}, the pair $(M, g|_M)$ is of the gradient type. 

By Lemma \ref{lem9.1}, any metric $\tilde g$ on $M$, which is sufficiently close to $g|_M$, is of the gradient type as well.
\end{proof}

In order to prove Theorem \ref{th9.0} below, we will need few lemmas, dealing with smooth triangulations of compact smooth Riemannian manifolds, triangulations that are specially adjusted to the given metric. \smallskip

Let $N$ be a smooth compact $n$-manifold. A smooth triangulation $T: K \to N$ is a homeomorphism from a finite simplicial complex $K$ to $N$. The triangulation is assembled out of several homeomorphisms $\{T_j:  \D \to N\}_j$, where $\D$ denotes of the standard $n$-simplex $\D \subset \R^{n+1}$, the restriction of $T_j$ to the interior of each subsimplex  $\D' \subset \D$ being a smooth diffeomorphism.  The homeomorphisms  $\{T_j:  \D \to N\}_j$ commute with affine maps of subsimplicies $\D' \subset \D$, the maps that assemble $K$ out of several copies of $\D$.

\begin{definition}\label{def9.2a}  
Consider a smooth triangulation $T: K \to N$ of a smooth compact $n$-manifold $N$ with a Riemannian metric $g$. 

Let $\g \subset T_j(\D)$ be a geodesic arc, and $\tilde\g_j =_{\mathsf{def}}(T_j)^{-1}(\g) \subset \D$ its preimage in the standard $n$-simplex $\D$. We denote by $l_{\mathsf E}(\tilde\g_j)$ its length in the Euclidean metric on $\D \subset \R^{n+1}$.
Let $\lambda(\tilde\g_j) \subset \D$ be a line segment that shares its ends with $\tilde\g_j $. We denote by $l_{\mathsf E}(\lambda(\tilde\g_j))$ its length.

Pick a number $\e> 0$. We say that a smooth triangulation $T: K \to N$ is $\e$-\emph{flat} with respect to $g$ if, for each index $j$ and any geodesic arc $\g \subset T_j(\D)$, the inequality is valid:
$$l_{\mathsf E}(\tilde\g_j) < (1+\e)\cdot l_{\mathsf E}(\lambda(\tilde\g_j)).$$  
\hfill $\diamondsuit$
\end{definition}

Note that the inequality in this definition remains valid under the conformal scaling of the simplex $\D$.  

\begin{lemma}\label{lem9.2a} Let $N$ be a compact  smooth $n$-manifold, equipped with a Riemannian metric $g$, and $U$ a convex domain in $\R^n$, equipped with the Euclidean metric $g_{\mathsf E}$. Consider a diffeomorphism $S: U \to S(U) \subset N$. Let $\g  \subset S(U)$ be a geodesic arc, and $\tilde\g$ its $S$-preimage. We denote by $\lambda(\tilde\g)$ the segment in $U$ that shares its ends with the arc $\tilde\g$. 

Assume that, for some $\e > 0$ and any geodesic arc $\g \subset S(U)$, the Euclidean lengths of $\tilde\g$ and $\lambda(\tilde\g)$ satisfy the inequality 
$$l_{\mathsf E}(\tilde\g) < (1+\e)\cdot l_{\mathsf E}(\lambda(\tilde\g)).$$

Then the arc $\tilde\g$ is contained in the $\delta$-neighborhood of the segment $\lambda(\tilde\g)$, where $\delta = l_{\mathsf E}(\lambda(\tilde\g))\cdot \sqrt{\frac{\e}{2}}\cdot \sqrt{1+\frac{\e}{2}}$.
\end{lemma}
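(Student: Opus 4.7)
The plan is to reduce the statement to a purely Euclidean fact: any rectifiable curve $\tilde\g$ in $\R^n$ with endpoints $A, B$ satisfying $l_{\mathsf E}(\tilde\g) < (1+\e)|AB|$ must be contained in the $\delta$-neighborhood of the segment $\overline{AB}$ for the $\delta$ claimed in the lemma. The manifold $(N,g)$, the diffeomorphism $S$, and the geodesic nature of $\g$ enter only through the provided length inequality on $\tilde\g$; the convexity of $U$ is used solely to guarantee that $\lambda(\tilde\g) \subset U$, so that the straight-line segment is available and Euclidean distances inside $U$ are unambiguous.

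The central step is a confinement argument: I will show that the whole arc $\tilde\g$ sits inside a closed prolate ellipsoid of revolution with foci at $A$ and $B$. For any point $p \in \tilde\g$, split $\tilde\g$ at $p$ into two sub-arcs and use that Euclidean segments minimize length to obtain the ``two-focal'' triangle inequality
$$|pA| + |pB| \;\leq\; l_{\mathsf E}(\tilde\g) \;<\; (1+\e)\,L \;=:\; s, \qquad L := |AB|.$$
Thus $p$ lies in $E_s := \{q \in \R^n : |qA| + |qB| \leq s\}$, a solid prolate ellipsoid with axis $\overline{AB}$, semi-major axis $a = s/2$, and semi-minor axis $b = \tfrac12\sqrt{s^2 - L^2}$.

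It then remains to bound $\max_{q \in E_s} \mathrm{dist}(q, \overline{AB})$ by $b$. Placing $A, B$ symmetrically on the $x_1$-axis and writing $q = (x_1, x')$ with $x' \in \R^{n-1}$, the distance splits into two cases according to whether the orthogonal projection of $q$ onto the line through $A, B$ falls inside $[A, B]$ (distance equals $|x'|$, bounded by $b\sqrt{1 - x_1^2/a^2} \leq b$) or outside it (distance equals $|qA|$ or $|qB|$, bounded by a short one-variable optimization on the boundary of $E_s$ using $b^2 = a^2 - (L/2)^2$, yielding a strictly smaller value $\leq b^2/a$). Substituting $s = (1+\e)L$ into the formula for $b$ gives
$$b \;=\; \frac{L}{2}\sqrt{(1+\e)^2 - 1} \;=\; \frac{L}{2}\sqrt{\e(2+\e)} \;=\; L\,\sqrt{\tfrac{\e}{2}}\,\sqrt{1+\tfrac{\e}{2}} \;=\; \delta,$$
which is the claim.

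There is no genuinely hard step. The only mildly technical point is confirming that the maximum of $\mathrm{dist}(\,\cdot\,, \overline{AB})$ over $E_s$ is attained at the ``equator'' $x_1 = 0$ and not near a focus; this is a routine calculus check and I would expect to dispatch it in a couple of lines.
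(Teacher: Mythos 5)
Your proposal is correct and follows essentially the same route as the paper's proof: confine $\tilde\g$ inside the prolate ellipsoid with foci at the endpoints and threshold $(1+\e)L$ via the two-focal triangle inequality, compute its semi-minor axis, and note that the ellipsoid lies in the $\delta$-neighborhood of the segment (the paper also flags the need for the small extra check near the foci, which you handle with the one-variable optimization). The arithmetic $\frac{L}{2}\sqrt{(1+\e)^2-1}=L\sqrt{\tfrac{\e}{2}}\sqrt{1+\tfrac{\e}{2}}$ matches the stated $\delta$.
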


\begin{proof} Let $\lambda  =_{\mathsf{def}} \lambda(\tilde\g)$ and let $\tilde a, \tilde b$ be the two ends of the segment $\lambda$. Put $\ell = d_{\mathsf E}(\tilde a, \tilde b) = l_{\mathsf E}(\lambda(\tilde\g))$. Consider the solid ellipsoid  $$\mathcal E = \{\tilde c \in U| d_{\mathsf E}(\tilde a, \tilde c) + d_{\mathsf E}(\tilde c, \tilde b) < (1+\e)\ell\}.$$  
The maximal distance from the main axis $\mu$ of the ellipsoid $\mathcal E$ to its boundary is the radius $\delta$ of the $(n-2)$-sphere, obtained by intersecting the bisector hyperplane $H$, orthogonal to $\lambda$ at its midpoint, with  $\d\mathcal E$. From the elementary $2D$-geometry, we get $\delta^2 + (\ell/2)^2 = ((1+\e)(\ell/2))^2$. Thus $\delta = \ell\cdot \sqrt{\frac{\e}{2}}\cdot \sqrt{1+\frac{\e}{2}}$. As a result, $\tilde\g$ must be contained in the $\delta$-neighborhood of $\mu$ ($\mu$ contains the segment $\lambda$). Additional elementary computations show that $\mathcal E$ is contained in the $\delta$-neighborhood of $\lambda$ as well.
 
 Take a typical point $x \in \tilde\g$. By the hypotheses, $l_{\mathsf E}(\lambda(\tilde\g)) < (1+\e)\ell$. Thus $$(1+\e)\ell > l_{\mathsf E}(\lambda(\tilde\g)) \geq d_{\mathsf E}(\tilde a, x) + d_{\mathsf E}(x, \tilde b).$$ 
Therefore $x \in \mathcal E$. As a result, $\tilde\g$ is contained in the neighborhood of $\lambda$ of the radius $\delta$.
 \hfill
 \end{proof}

\begin{figure}[ht]\label{fig9.3}
\centerline{\includegraphics[height=2in,width=3.7in]{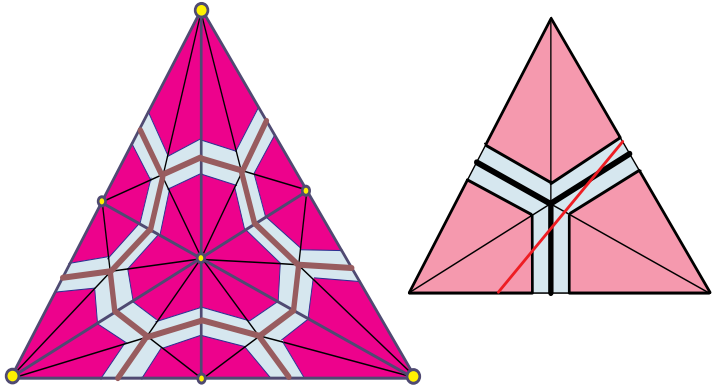}}
\bigskip
\caption{\small{The sets $St(\theta) \cap \D$ (left diagram) and $St(\theta) \cap \D' \subset \b\D$ (right diagram)}} 
\end{figure}

Let $\D$ be the standard $n$-simplex, residing in $\R^{n+1}$ and equipped with the Euclidean metric. We denote by $\b\D$ and $\b^2\D$ the first and the second barycentric subdivisions of $\D$. For any vertex $a \in \b\D$, we form its star $St(a)$ in $\b^2\D$. For any $0 < \theta \leq 1$, we denote by $St(a, \theta) \subset \D$  the $\theta$-homothetic image of $St(a)$, the center of homothety being at $a \in \b\D$. Consider the set $$St(\theta) =_{\mathsf{def}}\coprod_{a \in \b\D} St(a, \theta).$$

By a \emph{line} in $\D$ we mean an intersection of an affine line in $\R^n \subset \R^{n+1}$ with $\D$.

\begin{lemma}\label{lem9.3c} There exists a number $\theta_\star(n) \in (0, 1)$ such that every line $\mathcal L \subset \D$ has a point $a$ that belongs to the interior of $St(\theta_\star(n))$.\footnote{It is desirable to find an elementary argument that explicitly computes $\theta_\star(n)$ as a function of $n$.}
\end{lemma}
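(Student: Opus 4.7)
The plan is to combine a compactness argument on the space of lines in $\Delta$ with a structural analysis of the barycentric tiling. To begin, I record the following tiling property: each top-dimensional simplex of $\beta^2\Delta$ corresponds to a maximal chain $\mu_0 \subsetneq \mu_1 \subsetneq \cdots \subsetneq \mu_n$ of simplices of $\beta\Delta$, and the strict-dimension count $\dim \mu_i \geq i$ forces $\mu_0$ to be a $0$-simplex of $\beta\Delta$, i.e.\ a vertex. Consequently the closed stars $\{St(a)\}_{a \in \beta\Delta}$ tile $\Delta$: their relative $\Delta$-interiors are pairwise disjoint, with union $\Delta\setminus\Sigma$, where $\Sigma$ denotes the codimension-one skeleton of this tiling.

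Next I topologize the space $\mathcal L(\Delta)$ of lines in $\Delta$ as a compact space, for instance via the Hausdorff metric on closed subsets, or by parametrizing each line by a unit direction in the affine span of $\Delta$ together with a base point on a fixed compact transversal. For each $\mathcal L \in \mathcal L(\Delta)$ I set
\[
\theta^{\ast}(\mathcal L) \;=\; \inf\bigl\{\theta \in (0,1] \,:\, \mathcal L \cap \mathrm{int}_\Delta\, St(\theta) \neq \emptyset\bigr\}.
\]
A direct continuity check (the distance from a line to each compact scaled star depends continuously on both the line and the parameter $\theta$) shows that $\theta^{\ast}$ is upper semi-continuous. Thus the desired constant is $\theta_\star(n) := \sup_{\mathcal L} \theta^{\ast}(\mathcal L)$, and the entire task reduces to showing $\theta^{\ast}(\mathcal L) < 1$ for each line.

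The crux is therefore the following: every line $\mathcal L \subset \Delta$ meets the relative interior of at least one $St(a)$; equivalently, no line is contained entirely in $\Sigma$. A straight line in $\Sigma$ must lie in a single affine hyperplane $H$ containing at least one $(n-1)$-simplex of $\beta^2\Delta$ present in $\Sigma$ (only finitely many such ``facial'' hyperplanes exist). Inside the $(n-1)$-dimensional slice $H \cap \Delta$, the trace of the star tiling yields a cellular decomposition whose top-dimensional cells $St(a) \cap H$ have non-empty $(n-1)$-dimensional relative interior. I argue by induction on $n$, with trivial base case $n=1$, that $\mathcal L \subset H \cap \Delta$ must meet the relative interior of one of these cells; a point $p$ in that interior has a full $\R^n$-neighborhood inside the corresponding $St(a)$, contradicting $\mathcal L \subset \Sigma$.

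The main obstacle I anticipate is executing the inductive step cleanly: the trace of the star tiling on $H \cap \Delta$ is not literally a second barycentric subdivision of a lower-dimensional simplex, so one must either match the combinatorial data carefully or bypass the induction with a direct packing argument showing that the ``interior'' $(n-1)$-simplices of $\beta^2\Delta$---those whose defining chain $\mu_0 \subsetneq \mu_1 \subsetneq \cdots \subsetneq \mu_{n-1}$ begins at a vertex of $\beta\Delta$---are distributed densely enough in each facial hyperplane $H$ to block every line. The statement's footnote hints that a quantitative version of this packing would give an explicit formula for $\theta_\star(n)$, which I would pursue by bounding from below the inscribed radii of the stars at their centers and from above the maximum distance from a line in $\Delta$ to the nearest vertex of $\beta\Delta$.
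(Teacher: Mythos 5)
Your strategy---tile $\D$ by the closed stars, establish upper semicontinuity of $\theta^\ast$ on a compact space of lines, and reduce everything to the combinatorial claim that no line lies entirely in $\Sigma=\D\setminus\textup{int}\,St(1)$---parallels the paper's in spirit. The paper also compactifies the space of lines and also hinges on essentially the same combinatorial core, which it dispatches with only the unproved assertion that ``by the construction of $\b\D'$'' a non-degenerate segment inside $P_{\D'}(1)$ forces the ambient line to meet $\textup{int}\,St(1)$. You at least attempt an actual proof of that core, and you correctly flag that your attempt is incomplete; the flag is well-placed.

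Here is the concrete failure point. You assert that a point $p$ in the $(n-1)$-dimensional relative interior (taken inside $H$) of a cell $St(a)\cap H$ ``has a full $\R^n$-neighborhood inside the corresponding $St(a)$'' and hence lies in the open star of $a$. That implication is false exactly when $H$ is a supporting hyperplane of $St(a)$ at $p$, i.e., when $H\cap St(a)$ sits inside $\d St(a)$ near $p$---and that is precisely the situation you are in: $H$ was chosen as the affine span of a codimension-one simplex of $\Sigma$, and $\Sigma$ is built from simplices living on the common boundaries of the stars, so $H$ is tangent to the two adjacent stars along that simplex and may remain tangent all along $\mathcal L$. What you actually need is that, somewhere along $\mathcal L$, $H$ cuts transversally through some open star; no mechanism is supplied to produce such a point, and the inductive hypothesis as stated does not deliver it. Your concern about the trace of the tiling on $H\cap\D$ not being a barycentric subdivision is a symptom of the same problem, but the interior-versus-boundary issue is the root cause. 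One further caution: it will not help to restrict attention to a single simplex $\D'$ of $\b\D$ (as the paper's proof does). For $n\geq 3$ there are genuine chords of $\D'$ that lie entirely in $P_{\D'}(1)$---for instance, the segment with $\D'$-barycentric coordinates $(1/3,1/3,t,1/3-t)$, $t\in[0,1/3]$, whose maximum coordinate is tied at the first two slots for every $t$---even though these chords, when extended to chords of $\D$, do re-enter an open star just past $\d\D'$. So a correct argument really must run the line across all of $\D$ and extract information from its behavior near $\d\D$, which your $\D$-level formulation at least makes possible; that part of your choice was sound.
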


\begin{proof} Let $\D'$ be a typical simplex of $\b\D$. It will suffice to show that there exists an universal $\theta_\star(n) \in (0, 1)$, such that any line $\mathcal L \subset \D'$ has a point $b$ that belongs to the interior of some set $St(a, \theta_\star(n)) \cap \D'$, for a vertex $a \in \b\D$.

For each $\theta \in (0, 1]$, consider the polyhedron $P(\theta) =_{\mathsf{def}} \D \setminus \textup{int}(St(\theta))$. For each simplex $\D'$ of $\b\D$, put $P_{\D'}(\theta) = P(\theta) \cap \D'$. 

The space of lines $\mathsf L$ in $\D'$ is compact; in fact, it is a continuous image of a compact subset $\mathsf K$ of the Grassmanian $\mathsf{Gr}(n+1, 2)$. $\mathsf K$ consists of the $2$-planes through the point $(1, 0, \dots , 0) \in \R^{n+1}$ that have a nonempty intersections with $\D' \subset \R^n$. Here the hyperplane $\R^n \subset \R^{n+1}$ is defined by equating the first coordinate with zero. This construction gives rise to a continuous map $\pi: \mathsf K \to \mathsf L$. Any line whose intersection with $\D' \subset \R^n$ is not a singleton (equivalently whose intersection with $\D'$ is not a vertex of $\D'$), defines a unique point in $\mathsf{Gr}(n+1, 2)$.  

Consider an increasing sequence $\{\theta_i \in (0, 1)\}_i$ that converges to 1.   Contrarily to the claim of  the lemma, assume  that for each $i$, there exists a line $\mathcal L_i$ that is contained in the polyhedron $P_{\D'}(\theta_i)$. 

Using compactness of $\mathsf L$, there exists a subsequence $\{\mathcal L_{i_k}\}_k$ that converges to a limiting line $\mathcal L \subset \D'$. Then $\mathcal L \subset \bigcap_k P_{\D'}(\theta_{i_k}) = P_{\D'}(1)$.  Note that $\mathcal L$ is missing the verticies of $\D'$. On the other hand, by the construction of $\b\D'$, if a line $\mathcal L \subset \D'$ has a pair of distinct points $b, c$ such that the segment $[a, b] \subset P_{\D'}(1)$, then $\mathcal L \cap \textup{int}(St(1)) \neq \emptyset$. 

This contradiction proves that exists $\theta_\star(n) \in (0, 1)$ for which no line in $\D'$ is contained in $P(\theta_\star(n))$. The definition of the polyhedron $P_{\D'}(\theta_\star(n)) \subset P(\theta_\star(n))$ is given by an affine (metric-independent) construction. So the polyhedra $P_{\D'}(\theta_\star(n))$ for different $\D' \subset \D$ match automatically. By the same token, $P(\theta_\star(n))$ does not contain any lines for any simplex $\D$, not necessarily for the standard one.

Thus there is a number $\theta_\star(n) < 1$ such that, for each $\theta \in [\theta_\star(n), 1]$, every line $\mathcal L \subset \D$ hits the set $St(\theta).$
\end{proof}

\begin{conjecture}\label{conj9.0} Let $N$ be a compact smooth Riemannian manifold. For any sufficiently small $\e > 0$, $N$ admits a smooth $\e$-flat triangulation $T: K \to N$. \hfill $\diamondsuit$
\end{conjecture}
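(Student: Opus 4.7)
The plan is to begin with a coarse smooth triangulation of $N$ and to refine it until, in each simplex, every intrinsic geodesic arc is, after pullback, extremely close to a Euclidean line segment. By the classical theorems of Cairns and Whitehead, $N$ admits some smooth triangulation $T_0: K_0 \to N$. I would then pass to iterated barycentric subdivisions $K_m = \beta^m K_0$, obtaining smooth triangulations $T_m: K_m \to N$ whose maximum simplex $g$-diameter $r_m$ tends to zero, while the simplex shape (the ratio of inradius to circumradius) stays bounded uniformly in $m$.

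The heart of the proof is a quantitative local estimate. Fix an $n$-simplex $T_m(\Delta_j)$ and work in normal coordinates centered at the image $p_j$ of its barycenter. There the metric satisfies $g_{\alpha\beta}(x) = \delta_{\alpha\beta} + O(r_m^2)$ and the Christoffel symbols are $O(r_m)$, so any unit-speed geodesic arc $\gamma$ of length at most $r_m$ satisfies $\ddot\gamma = O(r_m)$ in these coordinates. Integrating twice, $\gamma$ deviates from the chord joining its endpoints by at most $O(r_m^3)$, which translates to a length ratio $l(\gamma)/l(\mathrm{chord}) = 1 + O(r_m^2)$. Since $\epsilon$-flatness is conformally scale-invariant (as already noted in the paper), it suffices to transfer this estimate from the normal chart on $N$ to the Euclidean structure on the standard simplex $\Delta$. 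This requires the composition $\exp_{p_j}^{-1}\circ T_m|_{\Delta_j}$, post-composed with the affine identification $\Delta_j \simeq \Delta$, to be $C^2$-close to an affine map. A Gronwall-type comparison of the geodesic ODEs then yields a quantitative bound of the form $\epsilon = O(r_m^2 \cdot \|R\|_{C^0(N)})$, where $R$ is the Riemann curvature tensor.

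The main obstacle is precisely the reconciliation between the combinatorial affine structure on $\Delta$ and the intrinsic geodesic geometry at the image barycenter: in dimensions $\geq 3$ there is no canonical notion of a \emph{geodesic simplex}, the faces of $T_m(\Delta_j)$ are generally not totally geodesic, and mere barycentric subdivision of an arbitrary initial triangulation does not cure this defect. My preferred fix is to build the initial triangulation more carefully, for instance by Whitney-embedding $N \hookrightarrow \mathbb{R}^M$, linearly triangulating a small tubular neighborhood, and pulling back via the normal retraction. An alternative is a Delaunay/patch construction in which a dense net of points is triangulated inside normal coordinate charts and then spliced together by a Munkres-style smoothing, so that each $n$-simplex of the resulting global smooth triangulation is $C^2$-close (after rescaling to unit size) to an affine simplex in the normal chart at its barycenter. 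Granted such a well-adapted triangulation, the local estimate above produces $\epsilon$-flatness for all sufficiently large $m$, establishing the conjecture.
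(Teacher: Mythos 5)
You are proving Conjecture~2.1, which the paper explicitly leaves open. The author states it without proof and then treats the existence of $\epsilon$-flat triangulations as a \emph{hypothesis} in Theorem~\ref{th9.0} and Corollary~\ref{cor9.3} rather than establishing it. So there is no proof in the paper to compare against, and your argument must be judged on its own.

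On its own merits the sketch contains a concrete error and a genuine gap. The error is the assertion that under iterated barycentric subdivision the simplex shape ``stays bounded uniformly in $m$.'' It does not: iterated barycentric subdivision produces arbitrarily degenerate simplices (already in dimension $2$, and dramatically so for $n\geq 3$); the inradius-to-circumradius ratio of the simplices of $\beta^m\Delta$ tends to $0$. This is fatal to the argument as written, because the $\epsilon$-flatness ratio $l_{\mathsf E}(\tilde\gamma_j)/l_{\mathsf E}(\lambda(\tilde\gamma_j))$ is invariant under \emph{conformal} rescalings of $\Delta$ (as the paper notes) but not under general affine maps. Identifying a degenerate subsimplex of $\beta^m\Delta$ with the standard $\Delta$ involves a linear map whose condition number blows up with $m$, and that condition number multiplies the flatness defect. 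Thus a correct $1+O(r_m^2)$ estimate in normal coordinates on $N$ does not descend to an $\epsilon$-flatness bound for the subdivided triangulation. One would have to replace barycentric subdivision with a shape-regular refinement scheme (e.g.\ Freudenthal/Kuhn-type edgewise subdivision, or a Delaunay refinement of a $g$-geodesic net).

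The deeper gap is the one you diagnose yourself but do not close. Since $\epsilon$-flatness is phrased in terms of $T_j^{-1}(\gamma)$ inside the Euclidean simplex, it is controlled not by the curvature of $g$ alone but by how close $(\exp^{\hat g}_{p_j})^{-1}\circ T_j$ is to an affine map in the $C^2$ sense, uniformly over $j$ and coherently across shared faces of adjacent simplices. Your proposed fixes --- a Whitney embedding with tubular retraction, or a Delaunay/Munkres splicing --- are plausible routes to such control, but neither is carried out, and that quantitative compatibility between the combinatorial affine structure and the local geodesic geometry is precisely the content a proof of the conjecture must supply. As written, the proposal locates the difficulty accurately but does not resolve it, which is consistent with the paper's decision to leave the statement as a conjecture.
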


\begin{theorem}\label{th9.0} Let $\theta_\star(n) \in (0,1)$ be as in Lemma \ref{lem9.3c}. Put $\theta'_\star(n) =_{\mathsf{def}} (1 + \theta_\star(n))/2$, and let $\e_\star(n)$ denote the Euclidean distance between the sets $St(\theta_\star(n))$ and $P(\theta'_\star(n))$ in the standard simplex $\D$. 
\begin{itemize}
\item Let $(N, g)$ be a closed connected smooth Riemannian $n$-manifold that admits a $\e_\star(n)$-flat smooth triangulation\footnote{By Conjecture \ref{conj9.0}, any $(N,g)$ will do.}. Then there exists a smooth $n$-ball $B \subset \textup{int}(N)$ such that the restriction of the metric $g$ to $M = N \setminus \textup{int}(B)$ is of the gradient type. \smallskip

\item If $(N, g)$ is a compact connected Riemannian $n$-manifold with boundary that admits a $\e_\star(n)$-flat smooth triangulation. Then, for each connected component $\d_\a N$ of the boundary $\d N$, there exists a relative $n$-ball $(B_\a, \delta B_\a) \subset (N, \d_\a N)$, where  $\delta_\a B =_{\mathsf{def}} B \cap \d_\a N$ is the $(n-1)$-ball, so that all the balls are disjointed and the restriction of the metric $g$ to the manifold $M = N \setminus \coprod_\a \textup{int}(B_\a)$ is of the gradient type. The manifolds $M$ and $N$ are diffeomorphic.
\end{itemize}
\end{theorem}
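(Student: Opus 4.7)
The plan is to invoke Lemma \ref{lem9.2} to reduce the gradient-type conclusion to the absence of closed and infinite-length geodesics in $M$, and then to construct $B$ as a smooth thickening of a contractible sub-polyhedron that blocks every geodesic of $N$. The starting point is a local claim: inside each top-dimensional simplex $T_j(\D)$ of the given $\e_\star(n)$-flat triangulation, every geodesic arc $\g$ crosses $T_j\bigl(\textup{int}(St(\theta'_\star(n)))\bigr)$. Indeed, letting $\tilde\g \subset \D$ be its preimage and $\lambda(\tilde\g)$ its chord, Lemma \ref{lem9.3c} puts some point $p$ of $\lambda(\tilde\g)$ in $\textup{int}(St(\theta_\star(n)))$, while Lemma \ref{lem9.2a} applied with $\e = \e_\star(n)$ places $\tilde\g$ in the $\delta$-tube about $\lambda(\tilde\g)$. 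Projecting $\tilde\g$ onto $\lambda(\tilde\g)$ and using the intermediate-value theorem produces $q \in \tilde\g$ with $|q-p| \leq \delta$, and the very definition of $\e_\star(n)$ as the Euclidean distance between $St(\theta_\star(n))$ and $P(\theta'_\star(n))$ forces $q \in \textup{int}(St(\theta'_\star(n)))$ provided $\delta < \e_\star(n)$.

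Next I globalize. Let
$$\Sigma \;=\; \bigcup_{a \in \textup{Vert}(\b K)} St(a,\theta'_\star(n)) \;\subset\; N,$$
a disjoint collection of contractible star-shaped open sets around the barycenters of the faces of $K$. The local claim yields that every geodesic of $N$ meets $\Sigma$. Choose a spanning tree $\mathcal T$ in the graph whose vertices are these barycenters and whose edges are pairs of adjacent barycenters in $\b K$, and form $B_0 = \Sigma \cup (\text{thin tubular neighborhood of } \mathcal T)$ inside $\textup{int}(N)$. Then $B_0$ is contractible, being a tree of contractible pieces with contractible overlaps, and its smooth regularization $B$ is a compact codimension-zero submanifold of $\textup{int}(N)$ whose spine is collapsible; the regular-neighborhood theorem in $n$-manifolds identifies $B$ with a smooth $n$-ball. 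Setting $M = N \setminus \textup{int}(B)$, any closed or infinite-length geodesic in $(M,g|_M)$ would lift to a geodesic of $(N,g)$ avoiding $\Sigma \subset \textup{int}(B)$, contradicting the local claim. Hence by Lemma \ref{lem9.2}, $g|_M$ is of the gradient type.

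For the version with boundary, the same construction is carried out separately inside a tubular collar of each boundary component $\d_\a N$, with the spanning tree $\mathcal T_\a$ anchored at a barycenter lying on $\d_\a N$. The thickening $B_\a$ of the corresponding star-plus-tree structure is a half-ball with $\delta B_\a = B_\a \cap \d_\a N$ an $(n-1)$-ball, and the $B_\a$ can be arranged to be disjoint by working in disjoint collars. Removing a boundary-attached half-ball does not alter the diffeomorphism type, so $M \cong N$, and the gradient-type argument of the previous paragraph applies verbatim.

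The main obstacle is the quantitative estimate $\delta < \e_\star(n)$, where Lemma \ref{lem9.2a} yields $\delta = l_{\mathsf E}(\lambda)\sqrt{\e_\star(n)/2}\sqrt{1+\e_\star(n)/2}$. One must calibrate $\theta_\star(n)$ so that the shell $St(\theta'_\star(n)) \setminus \textup{int}(St(\theta_\star(n)))$ is uniformly thicker than the worst-case $\delta$; equivalently, one may pass to a barycentric refinement of the flat triangulation (which preserves the flatness ratio) so that the chord lengths $l_{\mathsf E}(\lambda)$ in $\D$ stay small enough to beat the shell width. Pinning down the dimension-dependent geometry of the barycentric stars is precisely what the universal constant $\e_\star(n)$ in the statement is designed to encode, and this is the technical heart of the argument.
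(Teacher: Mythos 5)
Your proposal follows the paper's own strategy very closely: show that the $\e_\star$-flat triangulation forces every geodesic of $(N,g)$ to pass through the union $T(St(\theta'_\star))$ of stars around barycenters of $\b K$, thicken that union to a single smooth ball (or a union of relative half-balls) via a spanning tree, and conclude via Lemma~\ref{lem9.2}. The local crossing argument — Lemma~\ref{lem9.3c} for the line, Lemma~\ref{lem9.2a} for the $\delta$-tube around the chord, and then the definition of $\e_\star(n)$ as the shell thickness — is exactly the paper's. Your regular-neighborhood-of-a-collapsible-spine phrasing is a mild repackaging of the paper's ``attach $1$-handles whose cores form a tree'' and is equally valid.

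Two remarks on your calibration discussion and your boundary case. You are right to flag the step $\delta < \e_\star(n)$ as the technical crux: with $\e = \e_\star(n)$ in Lemma~\ref{lem9.2a} and $l_{\mathsf E}(\lambda) \leq \sqrt{2}$ (the diameter of $\D$), the bound $\delta \leq \sqrt{\e_\star(n)}\sqrt{1+\e_\star(n)/2}$ is not less than $\e_\star(n)$ for $\e_\star(n) < 2$, so an $\e_\star(n)$-flat triangulation is in fact not obviously enough; one needs, say, $\eta$-flatness with $\sqrt{2\eta} < \e_\star(n)$. However, your suggested repair — pass to a barycentric refinement so that chord lengths ``stay small'' — does not help as stated: each new small simplex is still parametrized by the full standard $\D$, so $l_{\mathsf E}(\lambda)$ is measured in the same fixed $\D$ and the $\delta/\e_\star$ ratio is unchanged; the flatness inequality and the shell geometry are both scale-invariant. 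The correct fix is simply to demand a smaller flatness parameter in the hypothesis. Second, the sentence ``the same construction is carried out separately inside a tubular collar of each boundary component'' reads as though the blocking set $\Sigma$ is taken only inside collars, which would leave interior geodesics unblocked; what is actually needed (and what the paper does) is to keep all the interior stars $\{B_v\}$ and merely route the joining trees so that each component of $\d N$ carries exactly one half-ball, then string the interior balls onto those trees with $1$-handles in the interior. With that reading your boundary argument matches the paper.
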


\begin{proof} The idea is first to construct a number of disjointed balls in $N$ so that each geodesic curve will hit some ball. Thus deleting such balls from $N$ will produce a ``geodesically traversing swiss cheese". When $N$ is closed, we will incorporate all the balls into a single smooth one. When $N$ has a boundary, then the balls will be incorporated in a domain, whose removal from $N$ does not change the smooth topology of $N$.\smallskip

Let $\b T$ and $\b^2T$ denote the first and second barycentric subdivisions of a given smooth triangulation $T: K \to N$. 
As before, $T$ is assembled from a collection of singular simplicies $\{T_j: \D \to N\}_j$.
\smallskip

Put $\e_\star = \e_\star(n)$,  $\theta_\star = \theta_\star(n)$. By the hypotheses, there exists a smooth $\e_\star$-flat smooth triangulation $T: K \to N$. 

Then, for any geodesic curve $\g \subset N$,  its $T_j$-preimage $\tilde\g_j$ in the simplex $\D$ is contained in the $\e_\star$-neighborhood of a line $\mathcal L_j \subset \D_j$. By Lemma \ref{lem9.3c}, that line contains a point $a_j \in St(\theta_\star)$ whose $\e_\star$-neighborhood $U_j \subset \D$ is contained in the set $St(\theta'_\star)$. So, by Definition \ref{def9.2a}, the curve $\tilde\g_j$ must intersect the neighborhood $U_j$. As a result, $\g$ has a non-empty intersection with the set $T(St(\theta'_\star))$, a finite disjoint union of $n$-dimensional $\mathsf{PL}$-balls $\{B_v\}_{v \in \b T}$, centered on the vertices of $\{T(v)\}_{v \in \b T}$ in $N$.

We can smoothen their boundaries by encapsulating each ball $B_v$ into a smooth ball $\hat B_v$ so that $\hat B_v \cap \hat B_{v'} = \emptyset$ for all distinct $v, v' \in \b T$. 

When $N$ is closed,  we place the disjoint union $A =_{\mathsf{def}} \coprod_{v \in \b T} \hat B_v$ inside of a single smooth ball $B \subset N$. This may be accomplished by attaching $1$-handles to $A$ so that the cores of the handles form a tree. Any geodesic in $N$ hits $B$ since it hits $A$.

In the case of a non-empty boundary $\d N$, by attaching first some relative $1$-handles $\{H_i \approx D^{n-1}_+ \times [0, 1]\}$, whose cores $\{0 \times [0, 1] \}$ reside in $\d N$, transforms $A \cap \d N$ into a disjoint union of several $(n-1)$-balls, each ball residing in its connected component of $\d N$. Again, in each component $\d_\a N$ of $\d N$, the attaching the handles is guided by a tree. 

In the process, we incapsulate $A$ into a disjoint union of several smooth balls that reside in the interior of $N$ and several relative balls, each of which is touching the corresponding boundary component $\d_\a N$. These relative balls are in 1-to-1 correspondence with the boundary components. Then connecting the balls in the interior on $N$ to the balls that touch the boundary $\d N$ by $1$-handles (which reside in $N$) produces the desired relative pairs $\{(B_\a, B_\a \cap \d N)\}_\a$, one pair per component of $\d N$. Again, any geodesic in $N$ hits the disjoint union of these pairs. The removal of the union from $N$ results in a smooth manifold $M$, which is diffeomorphic to $N$.
\end{proof}

\begin{corollary}\label{cor9.3} 
If a compact connected smooth Riemannian $n$-manifold $(M, g)$ with boundary admits a $\e_\star(n)$-flat triangulation, then $M$
admits a Riemannian metric $\tilde g$ of the gradient type. 

In fact, the subspace $\mathcal G(M)$ of such metrics $\tilde g$ is nonempty and open in the space $\mathcal R(M)$ of all Riemannian metrics on $M$.
\end{corollary}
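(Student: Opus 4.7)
The plan is to assemble the corollary directly from Theorem~\ref{th9.0} and Lemma~\ref{lem9.1}. The hypothesis that $(M,g)$ admits an $\e_\star(n)$-flat triangulation is exactly the input required by the second bullet of Theorem~\ref{th9.0} (with $M$ playing the role of $N$ there). First I would apply that theorem to produce, for each boundary component $\d_\a M$, a relative $n$-ball $(B_\a, \delta B_\a) \subset (M, \d_\a M)$, the balls pairwise disjoint, such that the submanifold
$$M_0 \;=\; M \setminus \coprod_\a \textup{int}(B_\a)$$
is diffeomorphic to $M$ and the restriction $g|_{M_0}$ is of the gradient type.

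Next, I would fix a diffeomorphism $\phi: M \to M_0$ (the theorem asserts its existence) and define the pulled-back metric $\tilde g := \phi^\ast(g|_{M_0})$ on $M$. The differential of $\phi$ induces a bundle diffeomorphism $S\phi: SM \to SM_0$ that intertwines the geodesic vector fields: $(S\phi)_\ast(v^{\tilde g}) = v^{g|_{M_0}}$. Thus, if $F: SM_0 \to \R$ is a smooth Lyapunov function with $dF(v^{g|_{M_0}}) > 0$ (which exists by Definition~\ref{def9.1} applied to $g|_{M_0} \in \mathcal G(M_0)$), then $F \circ S\phi$ is a smooth Lyapunov function for $v^{\tilde g}$ on $SM$. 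Hence $\tilde g \in \mathcal G(M)$, which shows that $\mathcal G(M)$ is nonempty.

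Finally, openness of $\mathcal G(M)$ in $\mathcal R(M)$ is precisely the content of Lemma~\ref{lem9.1}, so no additional argument is required. The combination yields the corollary.

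There is essentially no technical obstacle here: the only point that deserves a moment of care is verifying that the gradient-type property is preserved under pullback by a diffeomorphism of $M$, but this follows at once from the naturality of the geodesic flow and of the map $SM \to M$ under isometries in the sense that $\phi: (M,\tilde g) \to (M_0, g|_{M_0})$ is tautologically an isometry by construction of $\tilde g$.
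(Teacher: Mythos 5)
Your proof is correct and takes essentially the same approach as the paper: invoke Theorem~\ref{th9.0} for nonemptiness of $\mathcal G(M)$ and Lemma~\ref{lem9.1} for openness. The only difference is that you explicitly supply the pullback step (transporting the gradient-type metric from $M_0 = M \setminus \coprod_\a \textup{int}(B_\a)$ back to $M$ via the diffeomorphism $\phi$ provided by the theorem), which the paper's one-line proof leaves implicit; this is a legitimate and useful clarification rather than a different route.
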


\begin{proof} Use Theorem  \ref{th9.0} to conclude that $\mathcal G(M)$ is a nonempty set. By Lemma \ref{lem9.1}, $\mathcal G(M)$ is open in the space $\mathcal R(M)$.
\end{proof}

Note that the smooth balls, whose removal from $N$ (the ``geodesic Swiss cheese") delivers, by Theorem \ref{th9.0}, a metric of the gradient type on $M$, are not necessarily \emph{convex} in the original metric $g$.

\begin{conjecture}\label{conj9.0}  For any compact smooth Riemannian manifold $(N, g)$, there exists a finite disjoint union of smooth \emph{convex} balls whose removal from $N$ delivers the metric $g_M$ of the gradient type on their complement $M$. \hfill $\diamondsuit$
\end{conjecture}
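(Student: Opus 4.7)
My plan is to refine the construction of Theorem \ref{th9.0} by replacing the smoothed PL stars $\hat B_v$ with genuine geodesically convex balls centered at the same points. Granting Conjecture \ref{conj9.0} on the existence of $\e$-flat triangulations, I would first choose a smooth $\e_\star(n)$-flat triangulation $T: K \to N$ of arbitrarily small mesh $\delta$ (by iterated barycentric subdivision if needed). Let $r_{\mathsf{conv}} = r_{\mathsf{conv}}(N, g) > 0$ denote the global convexity radius of $(N, g)$, positive by compactness. For each vertex $u$ of the first barycentric subdivision $\b T$, I would consider the closed geodesic ball $B^g(T(u), \rho)$ of a common radius $\rho \ll r_{\mathsf{conv}}$; each such ball is then strictly geodesically convex and smoothly embedded.

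Next, I would calibrate $\rho$ so that two competing properties hold simultaneously: (i) the balls $\{B^g(T(u), \rho)\}_{u \in \b T}$ are pairwise disjoint, and (ii) every geodesic $\g \subset N$ enters at least one of them. For (i), the $g$-distance between distinct vertices of $T(\b T)$ exceeds $2\rho$ as soon as $\rho$ is a sufficiently small multiple of $\delta$, provided $T$ has bounded geometric regularity (bounded aspect ratio). For (ii), I would rerun the argument in Theorem \ref{th9.0}: inside each local simplex $T_j(\D)$, the pre-image $\tilde\g_j$ is contained in a controlled Euclidean neighborhood of a line $\mathcal L_j$, which by Lemma \ref{lem9.3c} meets $St(\theta_\star)$ at an interior point of some $St(u, \theta_\star)$. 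Since $T_j$ is nearly $g$-isometric on small, nearly flat simplices, $\g$ passes within $g$-distance $C \delta$ of $T(u)$, so choosing $\rho = C' \delta$ with $C' > C$ secures $\g \cap B^g(T(u), \rho) \neq \emptyset$.

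Finally, one sets $M = N \setminus \bigcup_u \textup{int}\big(B^g(T(u), \rho)\big)$. The removed family consists of disjoint geodesically convex balls, and every geodesic of $(N, g)$ meets at least one of them, so no geodesic of $(M, g|_M)$ is closed or is the image of a semi-open interval. By Lemma \ref{lem9.2}, $g|_M$ is of the gradient type, as required. For the case $\d N \neq \emptyset$, I would adapt the relative-handle step of Theorem \ref{th9.0} near each boundary component while keeping the interior balls convex; this requires only minor modifications and a broader interpretation of ``convex ball'' near $\d N$.

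The principal obstacle is reconciling (i) and (ii): $\rho$ must exceed the $g$-diameter of $T(St(u, \theta'_\star))$ so as to catch every geodesic, and yet must be less than half the minimum inter-vertex $g$-distance and less than $r_{\mathsf{conv}}$. All of these scale linearly with the mesh $\delta$, so the question reduces to controlling the ratio of star diameter to vertex separation \emph{uniformly} in the $g$-metric. This demands producing $\e_\star(n)$-flat triangulations with uniformly bounded aspect ratio; implementing this rigorously would likely require assembling $T$ from normal-coordinate charts in which a regular Euclidean triangulation is nearly $g$-isometric and nearly $\e$-flat, and then patching across charts. This is a substantial strengthening of the (currently conjectural) Conjecture \ref{conj9.0} on $\e$-flat triangulations, and is, in my view, the real heart of the matter.
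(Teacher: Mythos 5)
This statement is labeled a \emph{Conjecture} in the paper, not a theorem: the paper offers no proof, and in fact explicitly remarks just above it that the balls produced by Theorem~\ref{th9.0} are \emph{not} necessarily convex in $g$, signaling that the convexity requirement is a genuine open problem rather than a routine refinement. So there is no ``paper's proof'' to compare against; what you have written must stand on its own as an attempted resolution of the open question.

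As such, the argument does not close. You have correctly located the main difficulty---reconciling the disjointness of the geodesic balls $B^g(T(u),\rho)$ with the requirement that every geodesic be caught---and you have correctly reduced it to a uniform control on the ratio of star-diameter to vertex-separation across the whole triangulation. But you have not resolved that difficulty, and there are two concrete reasons to doubt that the scaling heuristic alone will. First, the remedy you propose for getting small mesh, ``iterated barycentric subdivision,'' is incompatible with bounded aspect ratio: barycentric subdivision degenerates simplices, so the constant $C'$ you need degenerates with it, and you would need a different (aspect-ratio-preserving) refinement scheme together with a proof that it preserves $\e_\star(n)$-flatness. Second, and more fundamentally, there is a quantitative obstruction even in the exact Euclidean model of the standard simplex $\D$: a geodesic is guaranteed to meet $T(St(u,\theta'_\star))$ somewhere, but possibly near its outer boundary, so a geodesic ball centered at $T(u)$ must have radius $\rho$ comparable to the \emph{diameter} of $St(u,\theta'_\star)$. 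Already in $\b\D$ the maximal extent of a star from its center is comparable to the minimal distance between adjacent barycenters (e.g.\ from the face barycenter to an edge-midpoint), so it is not clear there is any room to fit $\rho$ strictly between the capture radius and half the vertex separation, independently of $\theta'_\star$. Absent a sharper estimate of $\theta_\star(n)$ (which the paper itself leaves nonconstructive, cf.\ the footnote to Lemma~\ref{lem9.3c}), this inequality cannot be verified. Finally, the entire construction is conditioned on the unproved $\e$-flat triangulation Conjecture, and you candidly note it would require a strengthened form of it; that is correct, and it means the proposal, even if the calibration could be repaired, would establish an implication between two conjectures rather than a proof of this one.
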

\smallskip

\begin{definition}\label{def9.3} Let $(M, g)$ be a compact connected Riemannian manifold with boundary.
\begin{itemize}
\item We say that a metric $g$ on $M$ is \emph{geodesically  boundary generic}  if the geodesic vector field $v^g \in T(SM)$ is  boundary generic\footnote{See the discussion that precedes Definition \ref{def9.1} or Definition 2.1 from \cite{K1}.} with respect to the boundary $\d(SM) = SM|_{\d M}$.
\smallskip 

\item We say that a metric $g$ on $M$ is \emph{geodesically traversally  generic}  if the vector field $v^g \in T(SM)$ is of the gradient type and is traversally  generic\footnote{See Definition 3.2 from \cite{K2}.} with respect to $\d(SM) = SM|_{\d M}$.
\end{itemize}

We denote the space of all gradient type metrics on $M$ by the symbol $\mathcal{G}(M)$, the space of all  geodesically boundary generic metrics of the gradient type on $M$ by the symbol $\mathcal{G}^\dagger(M)$, and the space of all geodesically traversally generic metrics on $M$ by the symbol $\mathcal G^\ddagger(M)$. So we get $\mathcal G^\ddagger(M) \subset \mathcal G^\dagger(M) \subset \mathcal{G}(M)$. \hfill $\diamondsuit$ 
\end{definition}

\noindent {\bf Remark 2.2.}
%\begin{remark}\label{rem9.3} 
If $(M, g)$ is such that there exists a geodesic curve $\g \subset M$ whose arc is contained in $\d M$, then the metric $g$ is not geodesically boundary generic. % in the sense of Definition \ref{def9.3}. 

For example, the Euclidean metric on $\R^3$ is not geodesically boundary generic with respect to the ruled surface $S = \{z = x^2 - y^2\}$: indeed, $S$ is comprised of lines (geodesics). 

\hfill $\diamondsuit$
%\end{remark}
\smallskip

\noindent {\bf Example 2.2.}
%\begin{example}
Let $M$ be a domain in the Euclidean plane $\R^2$, bounded by simple smooth closed curves. Then the flat metric $g_{\mathsf E}$ is boundary generic on $M$ if and only if $\d M$ is comprised of strictly concave and convex loops or arcs that are separated by the cubic inflection points. In particular, no line, tangent to the boundary, has the order of tangency that exceed 3 (see Example 3.1 for the details).
\hfill $\diamondsuit$
%\end{example}
\smallskip

%\noindent {\bf Question 2.1.}
\begin{question}\label{q9.1} 
\emph{How to formulate the property of the geodesic vector field $v^g$ being boundary generic/traversally generic with respect to $\d(SM)$ in terms of the geodesic curves and Jacobi fields in $M$ and their interactions with $\d M$?} \hfill $\diamondsuit$
\end{question}
\smallskip
 
\noindent {\bf Remark 2.3.}
%\begin{remark}\label{rem9.5} 
Of course, not any metric $g$ on $M$ is of the gradient type. At the same time, thanks to Theorem \ref{th9.0}, the gradient-type metrics form a massive set. 
\smallskip 

Examples of geodesically  boundary generic metrics are also not so hard to exhibit.  They require only a \emph{localized} control of the geometry of $\d M$ in terms of $g$ (see Lemmas \ref{lem9.5} and \ref{lem9.5a}). For instance, if all the components of $\d M$ are either \emph{strictly convex} or \emph{strictly concave} in $g$, then $g$ is geodesically  boundary generic. 

In contrast, to manufacture a geodesicly traversally  generic metric is a more delicate task. In fact, we know only few examples, where gradient type metrics are proven to be of the traversally generic type: these examples have gradient-type metrics in which the boundary $\d M$ is strictly convex (see Corollary \ref{cor9.8}). However, we suspect that traversally  generic metrics are abundant (see Conjecture \ref{conj9.1}). In any case, by Theorem \ref{th9.1} below, the property of a metric $g$ to be traversally  generic is \emph{stable} under small smooth perturbations of $g$.
\hfill $\diamondsuit$
%\end{remark}
\smallskip

So we have only a weak evidence for the validity of following conjecture; however, the world in which it is valid seems to be a pleasing place... 

\begin{conjecture}\label{conj9.1} The sets $\mathcal G^\dagger(M)$ and $\mathcal G^\ddagger(M)$ are open and \emph{dense} in the space $\mathcal{G}(M)$. 
\hfill $\diamondsuit$
\end{conjecture}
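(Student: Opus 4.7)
\medskip

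\noindent\textbf{Proof proposal.} The plan is to dispatch openness first (which is essentially already in the paper) and then attack density by jet transversality. Since the author himself flags the statement as conjectural, the point of the sketch is to make the transversality framework precise and to locate the genuine obstruction.

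\smallskip

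\emph{Openness.} Openness of $\mathcal G^\dagger(M)$ in $\mathcal G(M)$ is Theorem \ref{th9.1}. For $\mathcal G^\ddagger(M)$, combine Lemma \ref{lem9.1} with the $C^\infty$-openness of traversally generic vector fields established in \cite{K2}, applied to the continuous assignment $g \mapsto v^g$ from $\mathcal R(M)$ into smooth sections of $T(SM)$.

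\smallskip

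\emph{Density set-up.} Both genericity conditions on $v^g$ are finite-order transversality conditions on the jets of $v^g$ along $\d(SM)$. Namely, for each $j$ one demands that the tangency locus $\d_j(SM)(v^g)$ be a smooth submanifold of codimension $j-1$ in $\d(SM)$ and that $v^g$ be transversal, as a section of $T(SM)/T(\d_{j-1}(SM))$, to the zero section along $\d_{j-1}(SM)$; traversal genericity in the sense of \cite{K2} adds further jet conditions at multi-tangency points. Collectively, the generic locus is the preimage of an open union of strata under the evaluation map
\begin{equation*}
\mathsf J^k \colon \mathcal R(M) \longrightarrow \Gamma^\infty\bigl(J^k(\d(SM),\, T(SM))\bigr), \qquad g \longmapsto j^k(v^g)\big|_{\d(SM)}.
\end{equation*}
By Thom's jet transversality theorem, density reduces to showing that $\mathsf J^k$ is ``sufficiently surjective'' at every point of $\d(SM)$, in the sense that its differential hits every jet direction needed to cross the stratification.

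\smallskip

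\emph{Infinitesimal calculation.} From the Hamiltonian form (\ref{eq9.6}), $v^g$ is the Hamiltonian vector field of $H^g$. A metric perturbation $\delta g$ induces $\delta H = \tfrac12 \sum_{\a,\b} \delta g_{\a\b}(x)\, p^\a p^\b$ and hence $\delta v^g = X_{\delta H}$. Thus the $k$-jet of $\delta v^g$ at a boundary point $w = (m,u) \in \d(SM)$ is determined by the $(k+1)$-jet of $\delta g$ at $m \in \d M$. One would then verify, by taking $\delta g_{\a\b}$ to be compactly supported bump functions localized near $m$, that the induced jets $j^k_w \delta v^g$ cover enough of the fiber of $J^k$ to force transversality to the generic stratification; the quadratic dependence of $\delta H$ on $p$ separates different unit directions $u$ as one varies the tensorial index $(\a,\b)$, which is what is needed to handle the fiberwise stratification of $SM$.

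\smallskip

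\emph{Main obstacle.} The substantive difficulty is that the admissible perturbations $\delta v^g$ do not fill out arbitrary perturbations of a vector field on $SM$: they must preserve the Sasaki symplectic geometry of $TM$ and remain homogeneous of degree one in $p$, so they live in a proper subspace of $\Gamma^\infty(T(SM))$ cut out by a symplectic constraint. One must therefore check, stratum by stratum, that the generic stratification in $J^k$ stays transversal to the image of this restricted variation space. For $\mathcal G^\dagger(M)$, the relevant conditions are first- and second-order in the normal direction to $\d(SM)$, and I expect a direct Poisson-bracket computation in a local coordinate chart near a boundary point to do the job. For $\mathcal G^\ddagger(M)$, however, the high-order normal forms of traversal genericity from \cite{K2} are much more delicate, and it is not at all clear that the symplectic/homogeneity constraint is compatible with every required higher-order transversality; this is where I expect the genuine obstruction to lie, and it matches the author's cautious phrasing of the statement as a conjecture.
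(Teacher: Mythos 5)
The statement you are addressing is explicitly labeled a \emph{conjecture} in the paper, and the paper does not prove it. What the paper does prove is the openness half, in Theorem~\ref{th9.1}; the density half is left open (the sentence preceding the conjecture even admits that the evidence for it is ``weak''). You have correctly separated those two halves and correctly attributed openness to Theorem~\ref{th9.1}. The one small inaccuracy in your openness paragraph is that Theorem~\ref{th9.1} already covers $\mathcal G^\ddagger(M)$ as well as $\mathcal G^\dagger(M)$, so there is no need to supply a separate argument for the $\mathcal G^\ddagger$ case: the paper's proof uses exactly the ingredients you name, namely the continuity of $g \mapsto v^g$ together with the $C^\infty$-openness of $\mathcal V^\ddagger(SM)$ from \cite{K2}.

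For density, your jet-transversality strategy --- perturb the metric, track the induced variation $\delta v^g = X_{\delta H}$, and try to cross the jet stratification --- is a natural plan, and the obstruction you isolate is precisely the right one: the admissible variations are Hamiltonian and fiberwise homogeneous of degree one in $p$, hence live in a proper subspace of $\Gamma^\infty(T(SM))$, so Thom transversality does not apply off the shelf. The paper itself gestures at a different route: Conjecture~\ref{conj9.2} proposes to keep the ambient metric fixed and instead perturb the boundary hypersurface $\d M$ inside an ambient open manifold $\hat M$, using the flag-transversality criterion of Lemma~\ref{lem9.5a} as the working tool, and the text notes that even a positive resolution of that conjecture would only yield density of $\mathcal G^\dagger(M)$, not of $\mathcal G^\ddagger(M)$. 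This matches your diagnosis that $\mathcal G^\dagger$ (a local, finite-order jet condition along $\d(SM)$) ought to be tractable, whereas $\mathcal G^\ddagger$ imposes global compatibility conditions among multi-tangency points along whole trajectories and is genuinely harder. In short, since the paper contains no proof of this statement, there is no proof to compare against; your proposal is an honest, correctly-calibrated analysis of the difficulty rather than a flawed argument, and it differs from the paper's suggested route (boundary perturbation) by instead perturbing the metric directly.
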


The openness of $\mathcal G^\dagger(M)$ and $\mathcal G^\ddagger(M)$ in $\mathcal{G}(M)$ follows from the theorem below.
\smallskip

\begin{theorem}\label{th9.1} Let $M$ be a compact smooth connected manifold with boundary. 

In the space $\mathcal R(M)$ of all Riemannian metrics on $M$, equipped with the $C^\infty$-topology, the spaces $\mathcal G^\dagger(M)$ and $\mathcal G^\ddagger(M)$ are open. Each of these spaces is invariant under the natural action of the smooth diffeomorphism group $\mathsf{Diff}(M)$ on $\mathcal R(M)$. 
\smallskip 

If a metric $g$ on $M$ is of the gradient type, then the geodesic field $v^g$ on $SM$ can be approximated arbitrary well in the $C^\infty$-topology by a traversally  generic field $\tilde v \in \mathcal V^\ddagger(SM)$. 
\end{theorem}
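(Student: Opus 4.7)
The plan is to reduce each assertion to a stability or density statement about vector fields on the ambient manifold $SM$ (or on $TM\setminus\sigma(M)$), which has already been established in \cite{K1} and \cite{K2}, and then to track how a perturbation of the metric $g$ translates into a $C^\infty$-perturbation of the geodesic field $v^g$. The main preliminary step is thus to formulate precisely the continuity of the assignment $g\mapsto v^g$. Since the sphere bundle $S_gM$ itself depends on $g$, I would work in a fixed neighborhood $U\subset TM\setminus\sigma(M)$ that contains $S_gM$ together with $S_{g'}M$ for every $g'$ sufficiently $C^\infty$-close to $g$; using equations (\ref{eq9.5}), the Hamiltonian vector field on $TM$ depends smoothly on $g$, and the fiberwise radial retraction $r_{g,g'}\colon S_gM\to S_{g'}M$ (inside each tangent space) is a diffeomorphism $C^\infty$-close to the identity when $g'$ is close to $g$. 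Transporting $v^{g'}|_{S_{g'}M}$ back through $r_{g,g'}$, one obtains a vector field on the fixed manifold $S_gM$ that $C^\infty$-converges to $v^g$ as $g'\to g$.

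For the openness of $\mathcal G^\dagger(M)$, I would invoke the local stability of boundary generic vector fields (\cite{K1}, Definition~2.1 and the transversality discussion preceding Definition~\ref{def9.1}): the conditions $v\pitchfork_{\partial X}$(zero section of $T(X)/T(\partial X)$) and the analogous transversality conditions on each stratum $\partial_j X(v)$ are open in the $C^\infty$-topology on vector fields, because they are transversality conditions on sections of finite-dimensional jet bundles over compact strata. Combined with the preceding continuity statement and with Lemma~\ref{lem9.1} (openness of $\mathcal G(M)$), this yields openness of $\mathcal G^\dagger(M)$. For $\mathcal G^\ddagger(M)$, the same argument applies, using the analogous $C^\infty$-openness of the traversally generic condition (see \cite{K2}, Definition~3.2 and the remarks on stability there).

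Diffeomorphism invariance is essentially formal: any $\phi\in\mathsf{Diff}(M)$ lifts canonically to a bundle diffeomorphism $d\phi\colon TM\to TM$ sending $S_{\phi^\ast g}M$ to $S_gM$ and intertwining the Hamiltonian flows, $(d\phi)_\ast v^{\phi^\ast g}=v^g$. All conditions in Definition~\ref{def9.3} are invariant under smooth conjugation of the field on the ambient manifold, so membership in $\mathcal G(M),\ \mathcal G^\dagger(M),\ \mathcal G^\ddagger(M)$ is preserved. For the final approximation claim, I would argue that if $g\in\mathcal G(M)$, then by Lemma~\ref{lem9.2} the geodesic field $v^g$ on $SM$ is traversing, and the $C^\infty$-density of traversally generic fields inside the space of traversing fields on a compact manifold with boundary (proven in \cite{K2}) then produces the required approximation $\tilde v\in\mathcal V^\ddagger(SM)$.

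The main obstacle is not any single technical estimate but rather the bookkeeping for the moving bundle $S_gM$: one must verify that the radial identification $r_{g,g'}$ respects the stratifications $\partial_j^\pm$ in the sense that the transversality conditions transfer faithfully between $(S_gM,v^g)$ and $(S_{g'}M,v^{g'})$. Once this is done carefully, every other ingredient — openness of transversality, smoothness of $g\mapsto v^g$, naturality under $\mathsf{Diff}(M)$, and the density of traversally generic fields — is either standard or already on record in \cite{K1,K2}. Note that the approximating field $\tilde v$ is \emph{not} claimed to be of the form $v^{\tilde g}$ for any metric $\tilde g$; this genuine restriction is what the earlier discussion in the introduction flags as the obstruction preventing an unconditional density statement for $\mathcal G^\ddagger(M)$ in $\mathcal R(M)$.
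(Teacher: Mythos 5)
Your proposal matches the paper's own proof in structure and in the key references: both establish openness via the continuity of the assignment $g \mapsto v^g$ into $\mathcal V(SM)$ combined with the $C^\infty$-openness of the boundary-generic and traversally generic conditions from \cite{K1} and \cite{K2} (the paper cites Theorem~6.7 and Corollary~6.4 of \cite{K2}), and both derive the final approximation claim from the density of traversally generic fields among traversing ones. Your extra care with the radial identification $r_{g,g'}$ between the moving sphere bundles $S_gM$ and $S_{g'}M$ fills in a point that the paper leaves implicit when it treats $\mathcal F: \mathcal R(M) \to \mathcal V(SM)$ as a map to vector fields on a \emph{fixed} manifold.
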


\begin{proof} The construction of the geodesic flow $g \Rightarrow v^g$ defines a continuous map $$\mathcal F: \mathcal R(M) \to \mathcal V(SM),$$ where $\mathcal V(SM)$ denotes the space of all vector fields on $SM$. By Theorem 6.7   and Corollary 6.4 from \cite{K2}, the subspace $\mathcal V^\ddagger(SM)$, formed by traversally  generic (and thus  gradient-like) vector fields, is open in $\mathcal V(SM)$. Similarly, the boundary generic and traversing fields form an open set $\mathcal V^\dagger(SM) \cap \mathcal V_{\mathsf{trav}}(M)$ in $\mathcal R(M)$. 

Since the germ of the geodesic $\g$ through a point $m \in M$ in the direction of a given unit tangent vector $u$ depends smoothly on metric $g$, we conclude that $\mathcal F$ is a continuous map. Therefore, $$\mathcal G^\ddagger(M) =_{\mathsf{def}} \mathcal F^{-1}\big(\mathcal V^\ddagger(SM)\big)$$ and $$\mathcal G^\dagger(M) =_{\mathsf{def}} \mathcal F^{-1}\big(\mathcal V^\dagger(SM) \cap \mathcal V_{\mathsf{trav}}(SM)\big)$$ are open sets in $\mathcal R(M)$.  
\smallskip

By definition, for any $g \in  \mathcal G(M)$, the geodesic field $v^g$ on $SM$ is of the gradient type (and thus traversing). Again, by Theorem 6.7 from \cite{K2}, $v^g$ can be approximated by a traversally  generic field $\tilde v \in \mathcal V^\ddagger(SM)$. Note however that the projections of $\tilde v$-trajectories under the map $SM \to M$ may not stay $C^\infty$-close to the geodesic lines in the original metric $g$ due to the concave boundary effects.

By Theorem \ref{th9.0}, $\mathcal V^\dagger(SM) \neq \emptyset$. Nevertheless, the question whether $\mathcal G^\ddagger(M) \neq \emptyset$ for a given $M$ remains open!
\smallskip

Evidently, by the ``naturality" of the geodesic flow,  the spaces $\mathcal G^\dagger(M)$, $\mathcal G^\ddagger(M)$ are invariant under the natural action of the smooth diffeomorphism group $\mathsf{Diff}(M)$ on $\mathcal R(M)$.
\end{proof}

\noindent {\bf Remark 2.4.} 
%\begin{remark}\label{rem9.6} 
Let $M$ be a codimension $0$ compact submanifold of a compact  Riemannian manifold $N$ such that $M \subset \mathsf{int}(N)$. If a metric $g$ on the ambient $N$ is of the gradient type, then, by Corollary \ref{cor9.3}, its restriction $g|_M$  is of the gradient type on $M$. 

Of course, if $g$ is geodesically traversally  generic on a compact manifold $N$, it may not be geodesically  traversally  generic on $M$. 
\hfill $\diamondsuit$ 
%\end{remark}
\smallskip

\noindent {\bf Example 2.3.}
%\begin{example}%\label{ex9.2}  
Consider  the hyperbolic space $\H ^n$ with its virtual spherical boundary $\d \H^n$ and hyperbolic metric $g$. The space $\H^n$ is modeled by the open unit ball in the Euclidean space $\mathsf E^n$. 

Each geodesic line hits $\d \H^n$ at a pair of points, where it is orthogonal (in the Euclidean metric) to $\d \H^n$. For each oriented geodesic line $\g$ trough a given point $x \in \H^n$ in the direction of a given vector $w$, consider the  distance $d_\H(x, w)$ between $x$ and the unique point $y \in \bar\g \cap \d \H^n$ that can be reached from $x$ by moving along $\g$ in the direction of $-w$, that is, $d_\H(x, w)$ is the length of the circular arc $(y, x) \subset \bar\g$ in $\mathsf E^n$. Evidently, $d_\H(x, w)$ is strictly increasing, as one moves along the oriented $\g$.  

Let $M \subset \H^n$ be a compact  codimension $0$  smooth submanifold, equipped with the induced hyperbolic metric. Then the geodesic field $v^g$ on the space $SM$ is of the gradient type, since $d_\H(x, w)$ is strictly increasing along the oriented trajectories of $v^g$. 

Again, by Theorem \ref{th9.1}, any metric $g'$ on $M$, sufficiently close to the hyperbolic metric $g|_M$, is also of the gradient type.
\hfill $\diamondsuit$
%\end{example}
\smallskip

\section{The Geodesic Scattering and Holography}

\noindent In this section, we will apply the Holographic Causality Principle \cite{K4}, to geodesic flows on the spaces $SM$ of unit tangent vectors on compact smooth Riemannian manifolds $M$ with boundary. 
\smallskip

We will be guided by a single important observation: if a metric $g$ on $M$ is of the gradient type, then the causality map $$C_{v^g}: \d_1^+(SM) \to \d_1^-(SM),$$ introduced in \cite{K4} (for generic smooth traversing vector fields $v$ on compact manifolds $X$ with boundary), is available! To get a feel for the nature of the causality map from \cite{K4}, the reader may glance at Figure 3. It depicts the causality map $C_v$ for a traversing field $v$ on a surface $X$ with boundary.\smallskip

The map $C_{v^g}$ represents the $g$-induced \emph{geodesic scattering}: indeed, with the help of $C_{v^g}$, each unit tangent vector $u \in T^+M|_{\d M}$ is mapped (``scattered") to a unit tangent vector $u' \in T^-M|_{\d M}$. Here $T^\pm M|_{\d M} \subset TM|_{\d M}$ denote the half-spaces, formed by vectors along $\d M$ that are tangent to $M$ and point inside/outside of $M$. 
\smallskip

\begin{figure}[ht]\label{fig9.4}
\centerline{\includegraphics[height=3in,width=4.2in]{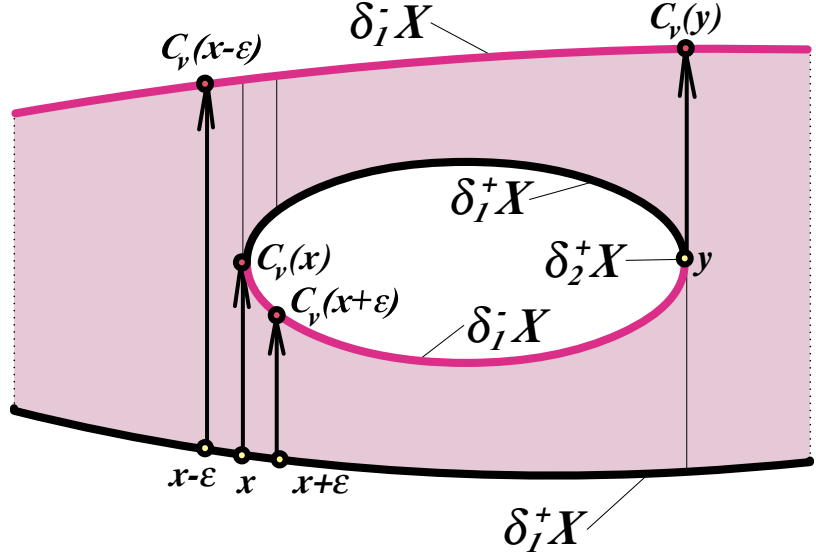}}
\bigskip
\caption{\small{The vertical traversing vector field $v$ on a surface $X$ divides its boundary $\delta_1X$ into two loci, $\delta_1^+X$ and $\delta_1^-X$. The causality map $C_v: \delta_1^+X \to \delta_1^-X$ is shown by the vertical arrows. The figure emphasizes the discontinuous nature of the map $C_v$.}} 
\end{figure}

We will employ boundary generic or even traversally generic metrics $g$ of the gradient type in order to control well the local structure of the causality map $C_{v^g}$.
\smallskip

For each tangent vector $w \in T_xM|_{\d M}$, consider its orthogonal decomposition $w = n \oplus u$ with respect to the metric $g$, where $n$ is the exterior normal to $\d M$  and $u \in T_x(\d M)$.  We denote by $\tau_g(x, w)$ the point  $(x, -n \oplus u) \in  T_xM|_{\d M}$.

\begin{lemma}\label{lem9.4} The $g$-independent manifolds $\d_1^+(SM)(v^g)$ and $\d_1^-(SM)(v^g)$ are diffeomorphic via the orientation-reversing involution $\tau_g: \d_1(SM) \to \d_1(SM)$.
\end{lemma}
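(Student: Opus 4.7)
The plan is to exhibit $\tau_g$ in standard reflection form and then verify in turn that it is a smooth involution of $\d_1(SM)$, that it swaps $\d_1^+(SM)$ with $\d_1^-(SM)$, and that it reverses orientation. Let $\nu_g : \d M \to TM$ be the smooth outward unit $g$-normal field along $\d M$. For $(x,w) \in \d_1(SM) = SM|_{\d M}$, the orthogonal decomposition in the definition reads $n(w) = \langle w,\nu_g(x)\rangle_g\, \nu_g(x)$ and $u(w) = w - n(w)$, so that
\[
\tau_g(x,w) \;=\; \bigl(x,\; w \,-\, 2\langle w,\nu_g(x)\rangle_g\, \nu_g(x)\bigr).
\]
Smoothness is immediate from smoothness of $\nu_g$, and since the map $w \mapsto w - 2\langle w,\nu_g\rangle_g\,\nu_g$ is the linear $g_x$-reflection across the hyperplane $T_x(\d M) \subset T_xM$, it is an isometry; in particular it preserves unit length, so $\tau_g$ is a self-map of $\d_1(SM)$. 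Reflecting twice yields the identity, hence $\tau_g^2 = \mathrm{id}$ and $\tau_g$ is a diffeomorphism.

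Next I would check the interchange of the two halves. Under the bundle projection $SM \to M$, the geodesic velocity $v^g_{(x,w)}$ projects to $w$, so the condition ``$v^g$ is directed inward of $SM$ at $(x,w)$'' is equivalent to ``$w$ points inward of $M$ at $x$'', i.e.\ to $\langle w,\nu_g(x)\rangle_g \le 0$; analogously $\d_1^-(SM) = \{\langle w,\nu_g(x)\rangle_g \ge 0\}$ and $\d_2(SM) = \{\langle w,\nu_g(x)\rangle_g = 0\}$. Since $\tau_g$ flips the sign of $\langle w,\nu_g(x)\rangle_g$ while preserving its magnitude, it carries $\d_1^+(SM)$ onto $\d_1^-(SM)$, fixes $\d_2(SM)$ pointwise, and restricts to the advertised diffeomorphism $\d_1^+(SM) \to \d_1^-(SM)$.

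For the orientation-reversal I would argue fiberwise. The map $\tau_g$ is the identity on the base $\d M$ and, on each fiber $S^{n-1}_x \subset T_xM$ of the sphere bundle $\d_1(SM) \to \d M$, acts as reflection across the equatorial $(n-2)$-sphere $S^{n-1}_x \cap T_x(\d M)$. This fiber reflection is the restriction of a $g_x$-linear map of determinant $-1$, and hence reverses the orientation of the $(n-1)$-sphere. Since the natural orientation of the total space $\d_1(SM)$ is the product of the base orientation of $\d M$ and the fiber orientation of $S^{n-1}$, a map that is the identity on the base and orientation-reversing on every fiber is orientation-reversing on the total space.

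No step uses anything about $g$ beyond smoothness, so no genuine obstacle arises; the only point that requires a moment of care is reconciling the two sign conventions ``$v^g$ is directed inward of $SM$'' (as in the definition of $\d_1^\pm$) and ``$\langle w,\nu_g\rangle_g \le 0$'' (as is convenient for the formula for $\tau_g$) at the transition from Definition to computation.
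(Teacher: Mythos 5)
Your proof is correct and follows the same path the paper intends: it simply spells out what the paper compresses into the one-line remark ``examining the definition of the strata and the construction of $\tau_g$''. In particular, your identification of $\d_1^\pm(SM)$ with the sign conditions on $\langle w,\nu_g\rangle_g$ (via the observation that $D\pi(v^g_{(x,w)})=w$), the explicit reflection formula for $\tau_g$, and the fiberwise orientation-reversal argument are exactly the details the paper leaves to the reader.
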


\begin{proof}  Examining the definition of the strata $\d^\pm_1(SM)(v^g)$ and the construction of $\tau_g$, we see that $\tau_g$ maps $\d_1^+(SM)(v^g)$ to $\d_1^-(SM)(v^g)$ by an orientation-reversing diffeomorphism.
\end{proof}

Given a compact Riemannian manifold $(M, g)$, we denote by $\mathcal F(v^g)$ the oriented $1$-dimensional foliation on $SM$, produced by the geodesic field $v^g$. 

Let $\psi^t: SM \to SM$ be the $v^g$-flow generated local diffeomorphism; for each $x \in SM$, the image $\psi^t(x)$ is well-defined only for some values of $t$, a time interval in $\R$.
  
\begin{definition}\label{def9.4} Given two compact Riemannian $n$-manifolds, $(M_1, g_1)$ and $(M_2, g_2)$, consider the  geodesic fields $v^{g_1}$ on $SM_1$ and $v^{g_2}$ on $SM_2$.
\begin{itemize}
\item We say that the metrics $g_1$ and $g_2$ are \emph{geodesic flow topologically strongly conjugate} if there is a homeomorphism $\Phi: SM_1 \to SM_2$ such that $(\Phi \circ \psi^t_1)(x) = (\psi^t_2 \circ \Phi)(x)$ for all $x \in SM_1$ and all moments $t \in \R$ for which $\psi^t_1(x)$ is well-defined\footnote{This implies that $\Phi$ preserves the lengths of the corresponding trajectories in the Sasaki metrics, and thus the lengths of the corresponding geodesic curves in $M_1$ and $M_2$ are equal.}. The restriction of $\Phi$ to each $v^{g_1}$-trajectory is required to be an orientation-preserving diffeomorphism.\smallskip

\item We say that the metrics $g_1$ and $g_2$ are \emph{geodesic flow topologically conjugate} if there is a homeomorphism $\Phi: SM_1 \to SM_2$ such that it maps each leaf of $\mathcal F(v^{g_1})$ to a leaf of $\mathcal F(v^{g_2})$, the map $\Phi$ on every leaf being an orientation-preserving diffeomorphism. 

\hfill $\diamondsuit$
\end{itemize} 
\end{definition}

Both notions of conjugacy come in different flavors by requiring that $g_i, \Phi^\d$, and $\Phi$ belong to various classes of $C^k$-smooth objects, where $k = 0, 1, 2, \dots , \infty$. For example, we may consider  $\Phi^\d, (\Phi^\d)^{-1}$ of the class $C^\infty$, while $\Phi, (\Phi)^{-1}$ may be just homeomorphisms (of the class $C^0$).  
\smallskip

For complete (in particular, closed) manifolds, the investigation of geodesic flow topologically conjugate metrics\footnote{These investigations employ a notion of geodesic conjugacy similar to the one in the first bullet of Definition \ref{def9.4}.}  led to a variety of strong results \cite{BCG}, \cite{Cr}, \cite{Cr1}, \cite{CK} \cite{CEK}, \cite{Mat}, \cite{SUV}, \cite{SU4}. Let us describe their spirit: under certain conditions, imposed on metrics a priori, the geodesic flow topological conjugacy implies an isometry of the underlying metrics! In some cases, to establish the isometry, one needs to know also the lengths of geodesic lines (the, so called, the \emph{lens data}). In particular, in \cite{CEK}, the following  result has been established. 

\begin{theorem}{\bf (Croke, Eberlein, Kleiner)}
Let $(N_1, g_1)$ and $(N_2, g_2)$ be two closed Riemannian manifolds, $\dim N_1 = \dim N_2 \geq 3$. Assume that both manifolds have nonpositive sectional curvatures and that one of them has rank $k \geq  2$. 
Denote by $\{\psi^t_i\}_{t \in \R}$, $i = 1, 2$, the $g_i$-induced geodesic flow on $SN_i$.

If there is a homeomorphism $\Phi: SN_1 \to SN_2$ such that $\Phi \circ \psi^t_1 = \psi^t_2 \circ \Phi$ for all $t \in \R$, then $g_1$ and $g_2$ are isometric. \hfill $\diamondsuit$
\end{theorem}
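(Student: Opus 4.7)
The plan is to use the higher rank rigidity theorem of Ballmann and Burns–Spatzier as the main lever, extracting from the topological conjugacy $\Phi$ enough geometric data to reconstruct an isometry. First I would verify that the rank of a geodesic (the dimension of the space of parallel Jacobi fields along it) is a dynamical invariant of the geodesic flow on a nonpositively curved closed manifold. On such a manifold, $\mathrm{rank}(\g) - 1$ coincides with the dimension of the neutral (zero Lyapunov exponent) subspace along $\g$, sitting between the stable and unstable distributions of the flow on $SN$. Since $\Phi \circ \psi^t_1 = \psi^t_2 \circ \Phi$ for every $t \in \R$, stable and unstable manifolds — defined purely via asymptotic contraction under the flow — are carried to their counterparts, and hence rank is preserved. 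The hypothesis that one of the manifolds has rank $k \geq 2$ therefore transfers to both.

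Second, I would apply higher rank rigidity: each irreducible factor of a nonpositively curved closed manifold of rank $\geq 2$ is locally symmetric of noncompact type. If either manifold is reducible, lift to universal covers, invoke the de Rham decomposition, and note that $\Phi$ respects the product splitting because maximal flats (and their tangential Clifford translations) are detectable from the flow alone. After this reduction I may assume both $\tilde N_i$ are irreducible symmetric spaces of noncompact type and rank $\geq 2$.

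Third, I would exploit the structure at infinity. The geometric boundary $\d_\infty \tilde N_i$ carries a canonical spherical Tits building whose simplices correspond to asymptotic classes of Weyl chambers in maximal flats; these flats are in turn identifiable inside $S\tilde N_i$ as the maximal orbit closures whose dynamics exhibits no hyperbolic behavior. A lift $\tilde \Phi$ of $\Phi$ extends continuously to a map of boundaries preserving the building combinatorics, and it is equivariant under an isomorphism $\pi_1(N_1) \cong \pi_1(N_2)$ induced by the conjugacy. By Tits' rigidity theorem for thick irreducible spherical buildings of rank $\geq 2$, such a boundary isomorphism is induced by an isometry of symmetric spaces, up to a positive scalar. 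The time-preserving condition $\Phi \circ \psi^t_1 = \psi^t_2 \circ \Phi$ then forces the scalar to be $1$, since the lengths of corresponding geodesic segments must agree; equivariantly descending this isometry yields the required isometry between $(N_1, g_1)$ and $(N_2, g_2)$.

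The hardest step will be the third one, and specifically the purely topological detection of flats and Weyl-chamber combinatorics from the conjugacy. The stable, unstable, and neutral foliations in nonpositive curvature are in general only Hölder continuous, so $\Phi$ cannot be assumed smooth along them; one must phrase every geometric notion — rank, flats, asymptotic Weyl chambers, boundary at infinity — in terms of the topological dynamics alone, and then verify it is preserved by an arbitrary continuous flow conjugacy. This is precisely the delicate heart of the Croke–Eberlein–Kleiner proof.
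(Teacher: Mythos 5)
The paper does not prove this theorem; it is quoted verbatim from Croke--Eberlein--Kleiner~\cite{CEK} as a black box to be combined later with the author's holography results (Theorem~\ref{th9.11} and its corollaries). There is therefore no ``paper's own proof'' to compare your sketch against, and you should not expect to find one.

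That said, your reconstruction is a reasonable high-level account of the published argument: higher-rank rigidity (Ballmann, Burns--Spatzier) to get local symmetry, then structure at infinity and the Tits geometry to upgrade a boundary correspondence to an isometry. Two places where the sketch, as written, would not survive scrutiny without substantial additional work. First, the identification of $\mathrm{rank}(\gamma)-1$ with the dimension of a ``zero Lyapunov exponent subspace'' conflates an everywhere-defined, upper-semicontinuous geometric quantity (parallel Jacobi fields, which in nonpositive curvature coincide with bounded Jacobi fields) with Oseledec data defined only almost everywhere relative to an invariant measure; making rank a genuine invariant of a mere $C^0$ flow conjugacy requires a pointwise, measure-free characterization, and this is precisely one of the technical contributions of~\cite{CEK} rather than a routine observation. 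Second, the step ``$\tilde\Phi$ extends continuously to the boundaries preserving building combinatorics'' hides the real work: the stable/unstable/neutral data are only H\"older and the conjugacy is only a homeomorphism, so one must characterize flats, asymptotic Weyl chambers, and the Tits metric on $\partial_\infty\tilde N_i$ entirely in terms of coarse flow dynamics before any Tits rigidity can be invoked. You flagged this yourself as the delicate heart of the matter, which is exactly right; but a complete proof would need to discharge it, and the scalar normalization at the end also needs the marked-length-spectrum consequence of time-preservation spelled out rather than asserted.

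For the purposes of this paper, none of this matters: Theorem~2.1 is an imported result, correctly attributed, and the author uses only its statement.
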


Besson, Courtois, and Gallot proved the following equally striking theorem (\cite{BCG}, Theorem 1.3).  

\begin{theorem}{\bf (Besson, Courtois, and Gallot)} Let $(N, g)$ be a closed locally symmetric manifold of a negative sectional curvature and of dimension $\geq 3$.

Then any Riemannian manifold $(N_1, g_1)$ whose geodesic flow is $C^1$-conjugate\footnote{that is, the geodesic flows conjugating map $\Phi: SN_1 \to SN_2$ is a $C^1$-diffeomorphism.} to that of $(N, g)$ is isometric to $(N, g)$. \hfill $\diamondsuit$
\end{theorem}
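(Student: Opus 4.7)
The plan is to follow the barycenter/natural-map strategy of Besson--Courtois--Gallot, reducing the conjugacy hypothesis to the minimal entropy rigidity theorem. Let $\tilde N$ and $\tilde N_1$ denote the universal covers, with deck groups $\Gamma$ and $\Gamma_1$. A $C^1$-conjugacy $\Phi : SN_1 \to SN$ of the geodesic flows lifts to a $C^1$ conjugacy between the flows on $S\tilde N_1$ and $S\tilde N$; moreover, since the flow on the negatively curved locally symmetric $(N,g)$ is Anosov, a conjugate flow is Anosov as well, forcing $(N_1,g_1)$ to have no conjugate points and the exponential maps on $\tilde N_1$ to be diffeomorphisms. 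The first step is to use this conjugacy to identify fundamental groups $\Gamma \cong \Gamma_1$ (via the action on closed orbits), to produce an equivariant homeomorphism $\phi : \partial_\infty \tilde N_1 \to \partial_\infty \tilde N$ between boundaries at infinity (by sending forward orbits to forward orbits), and to equate volume entropies $h(g) = h(g_1)$ (since topological entropy is a conjugacy invariant and, for $C^1$ Anosov geodesic flows without conjugate points, equals the volume entropy).

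The second step is to construct the natural map $F : \tilde N_1 \to \tilde N$. Push the Patterson--Sullivan measure $\mu_x$ on $\partial_\infty \tilde N$ back through $\phi$ to obtain a family of probability measures $\nu_x$ on $\partial_\infty \tilde N_1$ depending equivariantly on $x \in \tilde N_1$. For each $x$, define $F(x) \in \tilde N$ as the unique minimum of the convex function
\[
y \mapsto \int_{\partial_\infty \tilde N} B_\theta(y, y_0)\, d(\phi_\ast \nu_x)(\theta),
\]
where $B_\theta$ denotes the Busemann function on $\tilde N$ (strict convexity uses negative curvature and the non-atomicity of Patterson--Sullivan measures). By construction $F$ is $C^1$, equivariant with respect to the identification $\Gamma_1 \cong \Gamma$, and descends to a map $\bar F : N_1 \to N$.

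The third and technically decisive step is the Jacobian estimate. Differentiating the implicit definition of $F$ twice and using the symmetric structure of $(N,g)$ (which provides sharp control on the Hessian of $B_\theta$ via the formulas for the curvature tensor on a rank-one symmetric space), one obtains the pointwise bound
\[
|\mathrm{Jac}\, F(x)| \;\leq\; \left(\frac{h(g_1)}{h(g)}\right)^{\! n}
\]
with equality at $x$ only if $dF_x$ is a homothety of ratio $h(g_1)/h(g)$ between the tangent spaces. This is the heart of the BCG argument, and it is also the step I expect to be the main obstacle: it relies on a delicate Cauchy--Schwarz type inequality applied to the quadratic form built from the Hessian of $B_\theta$ and on the rank-one symmetric model calculation that makes the inequality sharp.

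Finally, integrate the Jacobian inequality over a fundamental domain:
\[
\mathrm{vol}(N) \;\geq\; \int_{N_1} |\mathrm{Jac}\, F|\, d\mathrm{vol}_{g_1} \cdot \frac{1}{\deg \bar F} \; ?
\]
Since $\Phi$ preserves Liouville measures up to the volume normalization and $h(g)=h(g_1)$, one checks $\mathrm{vol}(N_1) = \mathrm{vol}(N)$ and $\deg \bar F = 1$, forcing equality in the Jacobian bound almost everywhere. Equality in the pointwise estimate then yields that $dF$ is a homothety at every point, and the entropy equality makes the homothety ratio equal to one; hence $F$ is a local isometry. Being equivariant and of degree one between closed manifolds of the same volume, $F$ descends to a global isometry $(N_1,g_1) \cong (N,g)$, which completes the proof.
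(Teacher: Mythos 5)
The paper does not prove this theorem: it is quoted verbatim from Besson--Courtois--Gallot (\cite{BCG}, Theorem~1.3), marked with a terminal diamond rather than a \texttt{proof} environment, and is invoked afterwards only as an external input (e.g.\ in the proof of Corollary~\ref{cor9.15}). There is therefore no in-paper argument to compare your sketch against; the question is only whether your sketch would prove the statement.

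Evaluated on its own terms, your outline correctly identifies the BCG barycenter/natural-map strategy, the role of Patterson--Sullivan measures, the Jacobian bound $|\mathrm{Jac}\,F|\le (h(g_1)/h(g))^n$, and the rigidity in the equality case, but two points need repair. First, the natural-map definition as written is circular: you push $\mu_x$ \emph{back} through $\phi$ to obtain $\nu_x$, and then the displayed integrand pushes $\nu_x$ \emph{forward} through $\phi$ again, so what sits under the integral is just $\mu_x$. The intended construction starts from the Patterson--Sullivan family of $(\tilde N_1,g_1)$ on $\partial_\infty\tilde N_1$, pushes it forward through the boundary identification to $\partial_\infty\tilde N$, and takes the Busemann barycenter there. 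Second, and more seriously, the assertion that ``$\Phi$ preserves Liouville measures up to the volume normalization\dots\ one checks $\mathrm{vol}(N_1)=\mathrm{vol}(N)$'' is a genuine gap. A $C^1$-conjugacy of Anosov geodesic flows pulls back the Liouville measure on $SN$ (which on the locally symmetric side is the Bowen--Margulis measure of maximal entropy) to an absolutely continuous invariant measure on $SN_1$, hence, by uniqueness of the SRB measure, to a \emph{constant multiple} $c$ of the Liouville measure of $(N_1,g_1)$; this yields $\mathrm{vol}(N_1)=c\,\mathrm{vol}(N)$ with $c$ undetermined. Your degree--Jacobian integration (correctly, once the stray $1/\deg\bar F$ is removed and one uses $\int_{N_1}\mathrm{Jac}\,F\,d\mathrm{vol}_{g_1}=\deg\bar F\cdot\mathrm{vol}(N)$ together with $|\mathrm{Jac}\,F|\le 1$) gives only the one-sided inequality $\mathrm{vol}(N)\le\mathrm{vol}(N_1)$. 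To close the argument one still needs $\mathrm{vol}(N_1)\le\mathrm{vol}(N)$, i.e.\ $c\le 1$; in the actual BCG proof this is supplied by routing through marked length spectrum rigidity (a time-preserving $C^1$-conjugacy preserves periods of closed orbits, hence the marked length spectrum) and tracking the Bowen--Margulis normalization. As it stands your sketch proves a volume inequality but not the equality that triggers the equality case of the Jacobian estimate, so the isometry does not yet follow.
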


Motivated by the spirit of these theorems (as far as we understand, they do not apply directly to the geodesic flows on manifolds with boundary), we move towards linking the geodesic flow topologically conjugate Riemannian manifolds with the problem of \emph{inverse geodesic scattering}.  In its more daring formulation, the problem of inverse geodesic scattering asks to reconstruct the metric $g$ on $M$ from the scattering map $C_{v^g}: \d_1^+(SM) \to \d_1^-(SM)$, the reconstruction is thought up to the natural action of $\mathsf{Diff}(M, \d M)$, the group of diffeomorphisms that are the identity maps on $\d M$, on the Riemannian metrics (see Remark 3.2). \smallskip

In the definition below, we assume that a given smooth compact Riemannian manifold $M$ is embedded properly into a larger open manifold $\hat M$, and the metric $g$ on $M$ is extended smoothly to a metric $\hat g$ on $\hat M$. However, the properties, described in the definition, do not depend on a particular extension $(\hat M, \hat g) \supset (M, g)$. 

\begin{definition}\label{def9.4a} We say that a boundary generic metric $g$ of the gradient type on a compact manifold $M$ has the property $\mathsf A$, if \emph{one} of the following statements holds: 
\begin{itemize}
\item Any geodesic curve $\g \subset M$, but a singleton, has at least one point of transversal intersection with the boundary $\d M$. If the geodesic $\hat\g \subset \hat M$ is such that $\hat\g \cap M$ is a singleton $m$, then $\hat\g$ is 
quadratically tangent to $\d M$ at $m$. \smallskip

\item Any geodesic curve $\hat\g \subset \hat M$ that is tangent to $\d M$, is quadratically tangent  to it. \hfill $\diamondsuit$
\end{itemize}
\end{definition}

\noindent {\bf Remark 3.1.}
%\begin{remark}\label{rem9.4a} 
We will see later that the first bullet in property $\mathsf A$ implies that any trajectory of the geodesic flow $v^g$ on $SM$, but a singleton, has at least one point of transversal intersection with the boundary $\d(SM)$. The trajectories-singletons are quadratically tangent to $\d(SM)$ in $S\hat M$. In terms of \cite{K2} and \cite{K4}, the combinatorial tangency types of such trajectories do not belong to the closed poset $(33)_\succeq \cup (4)_\succeq \subset \Omega^\bullet$.

Similarly, the second bullet in property $\mathsf A$ implies that, if a $v^{\hat g}$-trajectory is tangent to $\d(SM)$ at a point, then it is quadratically tangent there.  In other words, the combinatorial tangency types of such trajectories do not belong to the closed poset $(3)_\succeq  \subset \Omega^\bullet$. 
\smallskip

The property $\mathsf A$ reflects the shortcomings of our proof of the \emph{smooth} version of The Holography Theorem 3.1 from \cite{K4}. We suspect that it is a superfluous assumption, and the conjugating homeomorphism in The Holography Theorem is actually a diffeomorphism. Therefore, property $\mathsf A$ is likely a superfluous constraint, when the conjugating homeomorphism $\Phi: SM_1 \to SM_2$ is desired to be a diffeomorphism. Regrettably,  we must include property $\mathsf A$ as a hypotheses in some theorems to follow. 
\hfill $\diamondsuit$
%\end{remark}
\smallskip

\noindent{\bf Remark 3.2.}
%\begin{remark}\label{rem9.4b} 
Note that the inverse scattering problems on Riemannian manifolds $(M, g)$ have an unavoidable intrinsic ambiguity. It arises from the action of the $M$-diffeomorphisms that are the identity on $\d M$ and whose differentials are the identity on $TM |_{\d M}$. Let us denote by $\mathsf{Diff}(M, \d M)$ the group of such diffeomorphisms. Each diffeomorphism $\phi \in \mathsf{Diff}(M, \d M)$ acts naturally on $g$, producing a new metric $\phi^\ast(g)$. Since such $\phi$ maps geodesics in $g$ to geodesics in $\phi^\ast(g)$,  $(M, g)$ and $(M, \phi^\ast(g))$ share the same the scattering map. Therefore, for a given random $M$ and $C_{v^g}$, the best we can hope for is to reconstruct $g$ up to the $\mathsf{Diff}(M, \d M)$-action. 

For any $\phi \in \mathsf{Diff}(M, \d M)$, the geodesic flows $v^g$ and $v^{\phi^\ast(g)}$ on $TM \setminus M$ are strongly conjugated with the help of the diffeomorphism $D\phi$.
\hfill $\diamondsuit$
\smallskip
%\end{remark}

Our main result, Theorem \ref{th9.2} below, claims: 

\emph{For smooth boundary generic Riemannian metrics of the gradient type, the inverse geodesic scattering problem is topologically rigid, up to the geodesic flow topologically conjugate equivalence\footnote{see the second bullet of Definition \ref{def9.4}} among metrics}.

\begin{theorem}\label{th9.2}{\bf (the topological rigidity of the geodesic flow for the inverse scattering problem)}\smallskip

Let $(M_1, g_1)$ and  $(M_2, g_2)$ be two smooth compact connected Riemannian $n$-manifolds with boundaries.  Let the metrics $g_1$, $g_2$ be geodesically boundary generic, and let $g_2$ be of the gradient type.
 
Assume that the scattering maps $$C_{v^{g_1}}: \d_1^+(SM_1) \to \d_1^-(SM_1)\;\; \text{and} \;\; C_{v^{g_2}}: \d_1^+(SM_2) \to \d_1^-(SM_2)$$ are conjugate by a smooth diffeomorphism $\Phi^\d: \d_1(SM_1) \to \d_1(SM_2)$.  \smallskip

Then $g_1$ is also of the gradient type. Moreover, the metrics $g_1$ and $g_2$ are geodesic flow \emph{topologically} conjugate.\smallskip

If  the metric $g_2$ possess property $\mathsf A$ from Definition \ref{def9.4a}, then so does $g_1$, and the conjugating homeomorphism $\Phi: SM_1 \to SM_2$ is a diffeomorphism.   
\end{theorem}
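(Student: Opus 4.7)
The plan is to reduce the theorem to two applications of the Holographic Causality Principle (Theorem 3.1 of \cite{K4}), using $\Phi^\d$ to transfer the gradient-type structure from $(M_2, g_2)$ to $(M_1, g_1)$ and then reconstruct the conjugating map in the interior of $SM_1$.

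First I would establish that $g_1$ is of the gradient type. The hypothesis and the diffeomorphism $\Phi^\d$ force $C_{v^{g_1}}$ to be well-defined on all of $\d_1^+(SM_1)$, so no $v^{g_1}$-trajectory starting on $\d(SM_1)$ is trapped. It remains to exclude trajectories lying entirely in $\textup{int}(SM_1)$---closed ones, or infinite escaping ones. To do this I would run the Holography reconstruction recipe on both sides. On the $(M_2, g_2)$-side, where the hypotheses of Theorem 3.1 of \cite{K4} hold outright, the recipe assembles an abstract compact space $Y_2$ (together with an oriented $1$-dimensional foliation) out of $(\d_1^+(SM_2), C_{v^{g_2}})$, and produces a homeomorphism $Y_2 \cong SM_2$ carrying the abstract foliation onto $\mathcal F(v^{g_2})$. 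Applying the same recipe to $(\d_1^+(SM_1), C_{v^{g_1}})$ produces a space $Y_1$ which, via $\Phi^\d$, is homeomorphic to $Y_2$ and hence to $SM_2$; in particular $Y_1$ is a compact $(2n-1)$-manifold with boundary canonically identified with $\d(SM_1)$. There is a tautological continuous map $\iota_1: Y_1 \to SM_1$, sending a class represented by an entry point $x$ and an intermediate time $t$ to $\psi^t_{g_1}(x)$. By the boundary genericity of $v^{g_1}$ and the absence of trapped boundary trajectories, $\iota_1$ is injective. Invariance of domain then makes $\iota_1$ an open embedding; its image is closed by compactness of $Y_1$ and equal to $SM_1$ by connectedness. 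Hence every point of $SM_1$ lies on a boundary-connecting $v^{g_1}$-trajectory, so $v^{g_1}$ is traversing, and by Lemma \ref{lem9.2}, $g_1 \in \mathcal G(M_1)$.

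With both metrics now in $\mathcal G^\dagger(M_i)$, a second direct application of Theorem 3.1 of \cite{K4} extends $\Phi^\d$ to a homeomorphism $\Phi: SM_1 \to SM_2$ carrying each oriented leaf of $\mathcal F(v^{g_1})$ orientation-preservingly onto a leaf of $\mathcal F(v^{g_2})$. This yields exactly the topological geodesic-flow conjugacy required by the second bullet of Definition \ref{def9.4}. For the smoothness upgrade assuming property $\mathsf A$ for $g_2$: by Remark 3.1, property $\mathsf A$ is expressed by the exclusion of the closed posets $(33)_\succeq \cup (4)_\succeq$ and $(3)_\succeq$ of combinatorial tangency types of $v^{g_2}$ at $\d(SM_2)$. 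These strata are determined by the boundary data and respected by $\Phi^\d$, so property $\mathsf A$ passes to $g_1$. Under this mild-tangency regime, the smooth variant of the Holography Theorem promotes the homeomorphism $\Phi$ from a mere topological conjugation to a diffeomorphism.

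The main obstacle is the first step: Theorem 3.1 of \cite{K4} is originally phrased for boundary generic \emph{traversing} vector fields, so one must verify that the abstract reconstruction of $Y_1$ can be performed for the field $v^{g_1}$ before establishing that it is traversing, and that the resulting object is a genuine $(2n-1)$-manifold with boundary $\d(SM_1)$ so that invariance of domain may be invoked. Showing injectivity of $\iota_1$---equivalently, ruling out distinct entries of $\d_1^+(SM_1)$ landing on the same flow line of $v^{g_1}$---requires a careful local analysis at the tangent locus $\d_2(SM_1)$, using precisely the boundary genericity of $g_1$ to ensure that tangential contacts with $\d(SM_1)$ do not produce phantom re-entries.
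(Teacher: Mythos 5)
Your outline agrees with the paper's at the highest level: first establish that $g_1 \in \mathcal G^\dagger(M_1)$ (i.e.\ that $v^{g_1}$ is traversing and hence $g_1$ is of the gradient type), and then invoke the Holography Theorem 3.1 of \cite{K4} (together with its corollary) to extend $\Phi^\d$ to the desired conjugating homeomorphism $\Phi$, with the property $\mathsf A$ upgrade at the end. The property $\mathsf A$ step is handled identically.

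The substantive difference is in the first step, and here you have done something genuinely different from the paper. The paper's proof simply observes that the conjugacy $\Phi^\d$ forces $C_{v^{g_1}}$ to be well-defined on all of $\d_1^+(SM_1)$, and then asserts ``in other words, $v^{g_1}$ is a traversing field.'' Taken literally, that inference only controls trajectories that start on $\d(SM_1)$; it does not by itself exclude $v^{g_1}$-trajectories that live entirely in $\mathrm{int}(SM_1)$ (closed orbits of $v^{g_1}$, or infinite-length geodesics trapped in $\mathrm{int}(M_1)$). You spotted this and supplied a separate mechanism: build the abstract holographic model $Y_1$ from $(\d_1^+(SM_1), C_{v^{g_1}})$, identify $Y_1 \cong Y_2 \cong SM_2$ through $\Phi^\d$ so that $Y_1$ is a compact $(2n-1)$-manifold with boundary $\d(SM_1)$, then map $Y_1$ into $SM_1$ by the tautological map $\iota_1$ and use injectivity plus invariance of domain and connectedness to force $\iota_1$ to be onto $SM_1$. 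That is a legitimate and in fact more robust way to conclude traversing, and if carried through it strengthens the paper's briefer argument.

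The gaps you flag at the end are real, however, and your proposal does not yet close them. First, the reconstruction recipe of \cite{K4} is stated for boundary generic \emph{traversing} fields, so you must check that the abstract model $Y_1$ and the tautological map $\iota_1$ are available before traversing is known; you name this as ``the main obstacle'' but leave it unresolved, and it is precisely the point at which your approach threatens to become circular. Second, your formula $\iota_1([x,t]) = \psi^t_{g_1}(x)$ quietly uses a time parameter that the scattering data alone do not supply (that is lens data); the map has to be defined instead through a Lyapunov-type parameter transported from $SM_2$, and continuity of that assignment across the tangency strata $\d_j(SM_1)$, $j \geq 2$, has to be argued with the same care that \cite{K4} expends in the proof of Theorem 3.1 — this is exactly where discontinuities of $C_{v^{g_1}}$ live. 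Until those two points are settled, what you have is a promising and more conscientious re-derivation of the paper's first step rather than a complete proof; the remainder of your argument (the second application of the Holography Theorem and the property $\mathsf A$ transfer) matches the paper.
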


\begin{proof} By the hypotheses, the geodesic field  $v^{g_2}$ is of the gradient type (equivalently, traversing) and boundary generic. If the causality map $C_{v^{g_1}}$ is not well-defined for some $w \in \d_1^+(SM_1)$, then by the property of $\Phi^\d$, the map $C_{v^{g_2}}$ is not well-defined at $\Phi^\d(w)$, a contradiction to the property of $v^{g_2}$ being traversing. Therefore the $v^{g_1}$-trajectory that connects $w$ with $C_{v^{g_1}}(w)$ is a closed segment or a singleton for any $w \in \d_1^+(SM_1)$; in other words, $v^{g_1}$ is a traversing field. By Lemma 4.1 from  \cite{K1}, any traversing field is of the gradient type. So $v^{g_1}$ is of the gradient type. Thus both causality maps $C_{v^{g_1}}$ and $C_{v^{g_2}}$ are well-defined.  

By applying the Holography Theorem 3.1 and Corollary 3.3 from \cite{K4}, we conclude that the two geodesic flows are topologically conjugate (in the sense of Definition \ref{def9.4}) by a homeomorphism $\Phi: SM_1 \to SM_2$. When property $\mathsf A$ is valid for one of the two manifolds, by the proof of the Holography Theorem 3.1 from  \cite{K4}, it is valid for the other one. So in this case, by the Holography Theorem, $\Phi$ is a diffeomorphism. 
\end{proof}

\begin{corollary}\label{cor9.4} Assume that a compact connected and smooth manifold $M$ with boundary admits a boundary generic  Riemannian metric $g$ of the gradient type. Let $\mathcal F(v^g)$ denote the oriented $1$-dimensional foliation, induced by the geodesic flow. \smallskip

Then the scattering map $C_{v^g}: \d_1^+(SM) \to \d_1^-(SM)$ allows for a reconstruction of the pair $(SM, \mathcal F(v^g))$, up to a homeomorphism of $SM$ which is the identity on $\d(SM)$ and an orientation-preserving diffeomorphism on each $v^g$-trajectory. \smallskip

If  $g$ possess property $\mathsf A$ from Definition \ref{def9.4a}, then it is possible to reconstruct the pair $(SM, \mathcal F(v^g))$, up to a diffeomorphism of $SM$ which is the identity on $\d(SM)$.
\hfill $\diamondsuit$
\end{corollary}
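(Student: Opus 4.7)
The plan is to derive the corollary directly from Theorem \ref{th9.2} by reinterpreting ``reconstruction from the scattering map'' as the corresponding uniqueness statement for data-compatible realizations. The input data consist of the $g$-independent manifold $\d_1(SM)$ together with the smooth scattering map $C_{v^g}:\d_1^+(SM)\to\d_1^-(SM)$. To say that these data determine $(SM,\mathcal F(v^g))$ up to the prescribed equivalence is precisely to say that whenever a second boundary generic gradient-type Riemannian manifold $(M',g')$ presents the same scattering data---i.e.\ there is a smooth diffeomorphism $\Phi^\d:\d_1(SM)\to\d_1(SM')$ with $\Phi^\d\circ C_{v^g} = C_{v^{g'}}\circ \Phi^\d|_{\d_1^+(SM)}$---then the pairs $(SM,\mathcal F(v^g))$ and $(SM',\mathcal F(v^{g'}))$ must be related by a homeomorphism extending $\Phi^\d$ which is an orientation-preserving diffeomorphism on each leaf.

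Next I would invoke Theorem \ref{th9.2} with $(M_1,g_1)=(M,g)$ and $(M_2,g_2)=(M',g')$; the hypotheses there are exactly our standing ones (both metrics boundary generic, one of them of the gradient type, and the two scattering maps conjugated by a smooth $\Phi^\d$). The theorem then furnishes the required homeomorphism $\Phi:SM\to SM'$ extending $\Phi^\d$ and sending leaves to leaves as orientation-preserving diffeomorphisms. Specializing to the trivial instance $(M',g')=(M,g)$ with $\Phi^\d = \mathsf{id}_{\d_1(SM)}$, this reads as the stated reconstruction of $(SM,\mathcal F(v^g))$ up to a self-homeomorphism of $SM$ fixing $\d(SM)$ pointwise and restricting to an orientation-preserving diffeomorphism on each $v^g$-trajectory. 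For the sharper assertion under property $\mathsf A$, I would quote the final clause of Theorem \ref{th9.2} verbatim: under that hypothesis $\Phi$ is automatically a diffeomorphism, and so the reconstruction is up to a diffeomorphism of $SM$ that is the identity on $\d(SM)$.

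I do not foresee a substantive technical obstacle, since all of the geometric content is absorbed into Theorem \ref{th9.2} (which itself rests on the Holography Theorem 3.1 of \cite{K4}). The only care needed is in the bookkeeping: verifying that $\d(SM)$, its partition $\d_1^+\cup\d_1^-$, and the relevant stratification $\{\d_j(SM),\d_j^\pm(SM)\}$ are either $g$-independent (hence determined by $\d M$ alone, as recorded in Lemma \ref{lem9.4} and the discussion preceding Definition \ref{def9.1}) or can be read off from $C_{v^g}$ itself via its singular behaviour, so that genuine \emph{reconstruction} from the scattering data---rather than mere \emph{recognition}---is indeed what the corollary asserts.
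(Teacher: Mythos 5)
Your proposal is correct and matches the paper's intent: the corollary is stated with no explicit proof in the paper (it ends with $\diamondsuit$), indicating that it is meant to follow immediately from Theorem \ref{th9.2} via exactly the ``same-data-implies-conjugate-flows'' reading you give, and the specialization to $(M',g')=(M,g)$, $\Phi^\d=\mathsf{id}$ is the right way to unpack the phrase ``up to a homeomorphism of $SM$ which is the identity on $\d(SM)$.'' Your closing caveat --- that the boundary stratification is $g$-independent or recoverable from $C_{v^g}$ --- is indeed the only bookkeeping one needs, and it is settled by the discussion preceding Definition \ref{def9.1} together with Lemma \ref{lem9.4}.
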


\noindent {\bf Example 3.1.} 
%\begin{example}\label{ex9.3}  
Consider a shell $M$, produced by removing a strictly convex domain from the interior of a strictly convex domain in the Euclidean space $\mathsf E$ (so topologically $M$ is a shell). Any geodesic line $\g$ in $M$ that is tangent to the boundary of the interior convex domain is transversal to the boundary of the exterior convex domain. The intersection $\g \cap \d M$ of any geodesic line $\g \subset \mathsf E$ that is tangent to the boundary of the exterior convex domain is a singleton. For such a pair $(M, g_\mathsf E)$, the geodesic field $v^g$ on $SM$ is traversally generic; in particular, it is boundary generic and of the gradient type. Thus Theorem \ref{th9.3} is applicable to $(M, g_\mathsf E)$, as well as to any metric $g$ in $M$, sufficiently close to $g_\mathsf E$. \hfill $\diamondsuit$
%\end{example}
\smallskip

Under the hypotheses of Theorem \ref{th9.2}, the scattering maps faithfully distinguish between manifolds whose spherical tangent bundles have distinct topological types:

\begin{corollary}\label{cor9.4a} Let $(M_1, g_1)$ and  $(M_2, g_2)$ be two smooth compact connected Riemannian $n$-manifolds with boundaries. Let the metrics $g_1$, $g_2$ be of the gradient type and geodesically boundary generic. 
Assume that the boundaries $\d M_1$ and $\d M_2$ are diffeomorphic\footnote{This implies that $\d(SM_1)$ and $\d(SM_2)$ are diffeomorphic.}, but the spaces $SM_1$ and $SM_2$ are not homeomorphic. 

Then no diffeomorphism $\Phi^\d: \d(SM_1) \to \d(SM_2)$ conjugates the two scattering maps $C_{v^{g_1}}$ and  $C_{v^{g_2}}$.  \hfill $\diamondsuit$
\end{corollary}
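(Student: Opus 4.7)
The claim is essentially the contrapositive of Theorem \ref{th9.2}, so the plan is very short: assume for contradiction that a smooth diffeomorphism $\Phi^\d: \d(SM_1) \to \d(SM_2)$ conjugates the scattering maps $C_{v^{g_1}}$ and $C_{v^{g_2}}$. All the hypotheses of Theorem \ref{th9.2} are already in force: both metrics $g_1, g_2$ are geodesically boundary generic and of the gradient type, and the required scattering conjugacy is precisely what we assumed. Theorem \ref{th9.2} then produces a homeomorphism $\Phi: SM_1 \to SM_2$ extending $\Phi^\d$ and realizing a geodesic-flow topological conjugacy between $(M_1,g_1)$ and $(M_2,g_2)$. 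In particular $SM_1$ and $SM_2$ are homeomorphic, directly contradicting the hypothesis.

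Before invoking Theorem \ref{th9.2}, I would first record a short justification of the (parenthetical) claim that $\d M_1$ and $\d M_2$ being diffeomorphic forces $\d(SM_1)$ and $\d(SM_2)$ to be diffeomorphic, so that there is something for $\Phi^\d$ to be. For any compact smooth $n$-manifold with boundary, the normal line bundle of $\d M$ in $M$ is trivial, hence $TM|_{\d M} \cong T(\d M) \oplus \underline{\R}$ as vector bundles over $\d M$. Consequently $\d(SM) = S(TM|_{\d M})$ is the fiberwise unit sphere bundle of $T(\d M) \oplus \underline{\R}$, a construction that depends functorially only on the diffeomorphism type of $\d M$. A diffeomorphism $\d M_1 \to \d M_2$ therefore lifts canonically (via its differential on $T\d M$ together with the identity on the trivial normal factor) to a diffeomorphism $\d(SM_1) \to \d(SM_2)$, confirming that the hypothesis on $\Phi^\d$ is not vacuous.

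There is no real obstacle: the entire content of the corollary is the application of Theorem \ref{th9.2} in contrapositive form. The only minor subtlety to keep in mind is that Theorem \ref{th9.2} as stated assumes $g_2$ (say) is of the gradient type and only that $g_1$ is boundary generic; the conclusion that $g_1$ is itself of the gradient type is part of that theorem, so the symmetric hypothesis here is more than enough. Consequently the proof reduces to a single sentence invoking Theorem \ref{th9.2}, followed by the observation that the homeomorphism $\Phi$ it produces contradicts $SM_1 \not\cong SM_2$.
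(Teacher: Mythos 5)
Your proof is correct and takes essentially the same route the paper intends: the corollary is stated without a separate proof because it is exactly the contrapositive of Theorem \ref{th9.2}, and your one-sentence reduction (conjugating $\Phi^\d$ exists $\Rightarrow$ extending homeomorphism $\Phi: SM_1 \to SM_2$ exists $\Rightarrow$ $SM_1 \cong SM_2$, contradiction) is precisely that. The extra paragraph unpacking the footnote, showing $\d(SM) = S(T(\d M)\oplus\underline{\R})$ depends only on the diffeomorphism type of $\d M$, is a correct and welcome clarification the paper leaves implicit.
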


\noindent{\bf Example 3.2.}
%\begin{example} 
Recall that the classical knots are determined by the topological types of their complements in $S^3$ or $\R^3$ (the Gordon-Luecke Theorem \cite{GL}), while the links are not; in fact, there are infinitely many distinct links whose complements are all homeomorphic to the complement of the Whitehead link. Let us examine Corollary \ref{cor9.4a}, while keeping these facts in mind.

Let $L_1 \subset D^3_1 \subset N_1$ and $L_2 \subset D^3_2 \subset N_2$ be two links in closed $3$-folds $(N_1, g_1)$ and $(N_2, g_2)$, respectively. We denote by $U_1$ and $U_2$ the regular tubular neighborhoods of $L_1 \subset D^3_1$ and $L_2 \subset D^3_2$. Put $M_1 = N_1 \setminus \textup{int}(U_1)$, $M_2 = N_2 \setminus \textup{int}(U_2)$. We assume that $L_1$ and $L_2$ have the same number of components; thus there exists a diffeomorphism $\psi: U_1 \to U_2$

If $g_1|_{M_1}$ and  $g_2|_{M_2}$ are boundary generic and of the gradient type, and $SM_1$ and $SM_2$ are \emph{not} homeomorphic, then the scattering maps $C_{v^{g_1}}$ and $C_{v^{g_2}}$ are not conjugated via any diffeomorphism $\Phi^\d: \d(SU_1) \to \d(SU_2)$. 

So the scattering maps \emph{distinguish} between the links $L_1$ and $L_2$ with non-homeomorphic \emph{stabilized} complements $SM_1$ and $SM_2$.  When $N_i = S^3$, then $SM_i$ is homeomorphic to $M_i \times S^2$.  

In particular, consider the sphere $S^3$, equipped  with the standard metric $g_S$. Let $D^3_\bullet \subset S^3$ be a smooth ball that properly contains a hemisphere $S^3_+$. Then any geodesic in $S^3$ hits $D^3_\bullet$.  Take a pair of links $L_i \subset S^3$, $i=1, 2$, such that their tubular neighborhoods $U_i$ are formed by attaching solid $1$-handles to $D^3_\bullet$. Then any geodesic on $S^3$ hits  $D^3_\bullet$ and thus each $U_i$. Therefore the metric $g_S$ on $M_i = S^3 \setminus U_i$ is of the gradient type. Assuming that the spherical metric is boundary generic on each $M_i$ (conjecturally, this property may be achieved by a smooth perturbation of $\d U_i$), we conclude that if the two scattering maps on $M_1$ and $M_2$ are smoothly conjugated, then $M_1 \times S^2$ must be homeomorphic to $M_2 \times S^2$.  

\hfill $\diamondsuit$
%\end{example}
\smallskip

Note that a Riemannian manifold  $(M, g)$ produces a $(2n-2)$-bundle $\xi^g$ over $SM$. It is a subbundle of $T(SM)$, transversal  to the $1$-foliation $\mathcal F(v^g)$. The isomorphism class of $\xi^g$ is well-defined by $v^g$. In fact, for a gradient-like $g$, the distribution $\xi^g$ is integrable: the tangent bundle to the $(2n-2)$-foliation $\{F^{-1}(c)\}_{c \in \R}$, where $dF(v^g) > 0$, delivers $\xi^g$. 

On the other hand, employing any $v^g$-invariant Riemannian metric $\mathsf g$ on $SM \subset TM$, we may consider the $(2n-2)$-distribution $(v^g)^\perp$, orthogonal in $\mathsf g$ to $v^g$. That distribution is locally invariant under the $v^g$-flow, and thus generates a $(2n-2)$-bundle $\tau^g$ over the space of un-parametrized geodesics $\mathcal T(v^g)$, such that $\Gamma^\ast(\tau^g) = \xi^g$, where $\Gamma: SM \to \mathcal T(v^g)$ is the obvious map. 
\smallskip

\begin{corollary}\label{cor9.1a} Under the notations and hypotheses of Theorem \ref{th9.2}, the geodesic flows conjugating homeomorphism $\Phi: SM_1 \to SM_2$ (which extends $\Phi^\d$) induces a bundle isomorphism  $\Phi^\ast: \xi^{g_2} \to \xi^{g_1}$. 

Similar,  the $\Phi^\d$-generated natural map $\Phi^\mathcal T : \mathcal T(v^{g_1}) \to \mathcal T(v^{g_2})$ of the spaces of geodesics induces the ``tangent" bundle isomorphism $(\Phi^\mathcal T)^\ast: \tau^{g_2} \to \tau^{g_1}$. 
\end{corollary}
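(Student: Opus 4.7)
The plan is first to promote the homeomorphism $\Phi$ to a homeomorphism $\Phi^{\mathcal T}$ of leaf spaces, and then to use the flow-equivariance of $\Phi$ along leaves to construct the bundle isomorphisms both on $SM_i$ and on $\mathcal T(v^{g_i})$.

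For the descent to leaf spaces, Theorem~\ref{th9.2} provides a homeomorphism $\Phi: SM_1 \to SM_2$ that carries the oriented leaves of $\mathcal F(v^{g_1})$ bijectively onto those of $\mathcal F(v^{g_2})$ and acts as an orientation-preserving diffeomorphism on each individual leaf. The rule $\Phi^{\mathcal T}([x]) := [\Phi(x)]$ is therefore well-defined, and the square $\Gamma_2 \circ \Phi = \Phi^{\mathcal T} \circ \Gamma_1$ commutes. Because the leaf spaces $\mathcal T(v^{g_i})$ carry the quotient topologies induced by the $\Gamma_i$, and $\Phi^{\pm 1}$ permute the fibres of the corresponding $\Gamma_i$, the map $\Phi^{\mathcal T}$ is automatically a homeomorphism with inverse induced by $\Phi^{-1}$.

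For the bundle isomorphism over $SM_1$ I would first treat the smooth case, when $\Phi$ is a diffeomorphism (as guaranteed, for instance, by property $\mathsf A$). Fix $v^{g_i}$-invariant Riemannian metrics $\mathsf g_i$ on $SM_i$ so that $\xi^{g_i}$ is realised as the orthogonal complement $(v^{g_i})^{\perp} \subset T(SM_i)$. Along each trajectory $d\Phi$ sends $v^{g_1}$ to a positive rescaling of $v^{g_2}$, and so carries $T\mathcal F(v^{g_1})$ isomorphically onto $\Phi^{\ast}T\mathcal F(v^{g_2})$; composing $d\Phi|_{\xi^{g_1}}$ with the $\Phi^{\ast}\mathsf g_2$-orthogonal projection onto $\Phi^{\ast}\xi^{g_2}$ yields a fibrewise linear map $\xi^{g_1} \to \Phi^{\ast}\xi^{g_2}$, which is an isomorphism in each fibre by transversality of $\xi^{g_i}$ to $\mathcal F(v^{g_i})$. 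Because $(v^{g_i})^{\perp}$ is preserved by the geodesic flow up to $\mathsf g_i$-isometry, the resulting isomorphism is compatible with the flow-induced identifications of fibres along any trajectory; hence it descends through $\Gamma_1$ to an isomorphism $(\Phi^{\mathcal T})^{\ast}\tau^{g_2} \cong \tau^{g_1}$ over the leaf spaces, whose $\Gamma_1$-pullback recovers the required $\Phi^{\ast}\xi^{g_2} \cong \xi^{g_1}$.

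The main obstacle is the case in which $\Phi$ is only a homeomorphism, where $d\Phi$ is not literally defined. I would replace it by the following topological surrogate: a leaf-preserving homeomorphism matches foliation charts of $\mathcal F(v^{g_1})$ with those of $\mathcal F(v^{g_2})$ and so induces an isomorphism of topological normal microbundles. Combined with the on-leaf smoothness of $\Phi$ recorded in Definition~\ref{def9.4}, this identifies $\Phi^{\ast}\xi^{g_2}$ with $\xi^{g_1}$ as isomorphism classes of topological vector bundles, which is what the corollary actually asserts. Verifying that the microbundle identification genuinely lifts to a vector-bundle isomorphism compatible with the flow-equivariance used to define $\tau^{g_i}$ (rather than merely a fibre-bundle isomorphism) is the step I expect to require the most technical care, and is where the on-leaf smoothness of $\Phi$ together with the smoothness of the boundary conjugacy $\Phi^{\d}$ would have to be exploited to carry out local smoothings.
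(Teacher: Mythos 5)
Your construction of $\Phi^{\mathcal T}$ from the leaf-preserving property of $\Phi$ and the commutative square $\Gamma_2\circ\Phi = \Phi^{\mathcal T}\circ\Gamma_1$ agrees in substance with the paper, which realizes $\mathcal T(v^{g_i})$ as the quotient $\d(SM_i)/C_{v^{g_i}}$ and obtains $\Phi^{\mathcal T}$ directly from the conjugacy $\Phi^\d\circ C_{v^{g_1}} = C_{v^{g_2}}\circ\Phi^\d$. Where you diverge from the paper is in producing the isomorphism $\Phi^\ast\colon\xi^{g_2}\to\xi^{g_1}$: you attempt a differential-geometric argument ($d\Phi$ followed by orthogonal projection onto $\Phi^\ast\xi^{g_2}$) in the smooth case, and a microbundle argument in the merely topological case. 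The paper instead invokes a structural fact supplied by the proof of the Holography Theorem, namely that the conjugating homeomorphism satisfies $\Phi^\ast(F_2) = F_1$ for the Lyapunov functions $F_i$ with $dF_i(v^{g_i})>0$, and it uses the realization of $\xi^{g_i}$ recorded in the preamble as the tangent bundle to the integrable $(2n-2)$-foliation $\{F_i^{-1}(c)\}_c$. Because $\Phi$ carries level sets of $F_1$ to level sets of $F_2$, the integral manifolds of $\xi^{g_1}$ go to those of $\xi^{g_2}$, and the bundle identification then descends to $(\Phi^{\mathcal T})^\ast\colon\tau^{g_2}\to\tau^{g_1}$ via $\Gamma_i^\ast(\tau^{g_i})=\xi^{g_i}$.

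The genuine gap in your proposal is the topological case, which you flag yourself: you never establish that a leaf-preserving homeomorphism that is merely smooth on each one-dimensional leaf upgrades a normal microbundle equivalence to a genuine vector-bundle isomorphism, and that this upgrade is flow-equivariant in the sense needed for descent to $\tau^{g_i}$. This is precisely the difficulty the paper routes around by not touching $d\Phi$ at all and instead exploiting $\Phi^\ast(F_2)=F_1$, a piece of information coming from the proof of Theorem~\ref{th9.2} (i.e., from the Holography Theorem of \cite{K4}) that your proposal does not use. Without that ingredient, or a completed microbundle-to-vector-bundle argument, your proof of the first assertion is incomplete in exactly the case (topological $\Phi$) that the corollary is required to cover.
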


\begin{proof} By the proof of Holography Theorem 3.1 from \cite{K4}, the homeomorphism $\Phi$ has the property $\Phi^\ast(F_2) = F_1$, where the function $F_i: SM_i \to \R$ is such that $dF_i(v^{g_i}) > 0$ ($i = 1, 2$).

Since both geodesic flows are traversing, $\mathcal T(v^{g_i}) = \d(SM_i)/C_{v^{g_i}}$, where $\d(SM_i)/C_{v^{g_i}}$ denotes the quotient space by the partially-defined scattering map $C_{v^{g_i}}$. Since $\Phi^\d \circ C_{v^{g_1}} =  C_{v^{g_2}} \circ \Phi^\d$, the diffeomorphism $\Phi^\d: \d(SM_1) \to \d(SM_2)$ generates the homeomorphism $\Phi^\mathcal T : \mathcal T(v^{g_1}) \to \mathcal T(v^{g_2})$. Moreover, by the proof of Theorem \ref{th9.2}, the homeomorphism $\Phi^\mathcal T$ has the property $\Gamma_2 \circ \Phi = \Phi^\mathcal T \circ \Gamma_1$. Therefore, using that $\Gamma_i^\ast(\tau_i) = \xi_i$, we conclude that $(\Phi^\mathcal T)^\ast: \tau^{g_2} \to \tau^{g_1}$ is a bundle isomorphism.  
\end{proof}

\noindent {\bf Remark 3.3.}
%\begin{remark}\label{rem9.6}  
If a pair $(M, g)$ is such that the geodesic field $v^g$ is traversing on $SM$, then the trajectory space $\mathcal T(v^g)$---\emph{the space of un-parametrized geodesics} on $M$--- is a separable compact space. It is given the quotient topology via the obvious map $\pi: SM \to \mathcal T(v^g)$. For a geodesically boundary generic metric $g$ of the gradient type, the map $\pi |: \d(SM) \to \mathcal T(v^g)$ has finite fibers. 

The space of geodesics $\mathcal T(v^g)$, although not a manifold in general, for traversing geodesic flows, inherits a surrogate smooth structure from $SM$ (as in Definitions 2.2 and 2.3 from \cite{K4}). This structure manifests itself in the form of the algebra $C^\infty(\mathcal T(v^g))$ of smooth functions  $f \in C^\infty(SM)$ such that the directional derivatives $\mathcal L_{v^g}(f) = 0$. 

The bundle $\tau^g$ may be viewed as a surrogate tangent bundle of the stratified singular space $\mathcal T(v^g)$, since its restriction to the open and dense set $\mathcal T(v^g, \omega_0)$ of trajectories of the combinatorial tangency type $\omega_0 = (1,1)$ may be identified with the ``honest" tangent bundle of the open manifold $\mathcal T(v^g, \omega_0)$ (\cite{K3}, \cite{K4}).  
\hfill $\diamondsuit$
%\end{remark}
\smallskip

Recall that for any traversing boundary generic field $v$ on a compact smooth manifold $X$, each $v$-trajectory $\g$ produces an ordered finite sequence $\omega = (\omega_1, \omega_2, \dots)$ of positive integral multiplicities, associated with the finite ordered set $\g \cap \d X$. These sequences $\omega$ can be  organized in an \emph{universal poset} $\Omega^\bullet$ \cite{K3}. \smallskip

In view of Remark 3.2 and Corollary 3.2 from \cite{K4}, Theorem \ref{th9.2} has an instant, but philosophically important implication:

\begin{theorem}\label{th9.3} Assume that $M$ admits a geodesically boundary generic Riemannian metric $g$ of the gradient type. Then the scattering map $C_{v^g}: \d_1^+(SM) \to \d_1^-(SM)$  allows for a reconstruction of: 
\begin{itemize}

\item the $\Omega^\bullet$-stratified  topological type of the space $\mathcal T(v^g)$  of un-parametrized geodesics on $M$,

\item  the isomorphism class of the ``tangent" $(2n-2)$-bundle $\tau^g$ of the space $\mathcal T(v^g)$, 

\item  the  smooth topological type of the space $\mathcal T(v^g)$, provided that the property $\mathsf A$ from Definition \ref{def9.4a} is valid. 
\end{itemize}
\end{theorem}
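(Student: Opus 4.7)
The plan is to treat Theorem \ref{th9.3} as a direct corollary of Theorem \ref{th9.2}, together with Corollaries \ref{cor9.4} and \ref{cor9.1a}, specialized to the situation where one compares $(M,g)$ with itself (or with any other boundary generic gradient-type Riemannian manifold producing the same scattering map). The first observation is that, since $g\in\mathcal G^\dagger(M)$, the geodesic field $v^g$ on $SM$ is traversing by Lemma \ref{lem9.2}, so every $v^g$-trajectory is either a singleton on $\d(SM)$ or a closed segment with both endpoints on $\d(SM)$. Consequently, as a topological space,
\[
\mathcal T(v^g)\;=\;\d(SM)/\!\sim_{C_{v^g}},
\]
where $\sim_{C_{v^g}}$ is the equivalence relation generated on $\d(SM)$ by $w\sim C_{v^g}(w)$ for $w\in\d_1^+(SM)$. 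Thus the underlying compact Hausdorff space $\mathcal T(v^g)$ is literally manufactured out of the pair $(\d(SM), C_{v^g})$.

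For the first bullet, the $\Omega^\bullet$-stratification of $\mathcal T(v^g)$ records, for each trajectory $\g$, the ordered sequence $\omega(\g)=(\omega_1,\omega_2,\ldots)$ of tangency multiplicities of $\g$ with $\d(SM)$. I would read off $\omega(\g)$ from the scattering map as follows: the length of the $\sim_{C_{v^g}}$-orbit of any point $w\in\g\cap\d(SM)$ gives the number of entries in $\omega(\g)$, and the individual multiplicities $\omega_j$ are encoded by the local singularity type of $C_{v^g}$ at each corresponding boundary point. Boundary genericity of $g$ (Definition \ref{def9.3}) is exactly the condition that makes these multiplicities well-defined from the jets of $C_{v^g}$, and the extraction procedure is carried out in \cite{K4} for arbitrary traversing boundary generic fields; pushing this data down through $\pi:\d(SM)\to\mathcal T(v^g)$ yields the desired stratification.

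For the second bullet, I would invoke Corollary \ref{cor9.1a} in the following way: given any two smooth realizations $(M_1,g_1)$ and $(M_2,g_2)$ whose scattering maps are conjugated by a diffeomorphism $\Phi^\d$, the induced homeomorphism $\Phi^{\mathcal T}:\mathcal T(v^{g_1})\to\mathcal T(v^{g_2})$ pulls $\tau^{g_2}$ back to $\tau^{g_1}$ as a bundle isomorphism. Hence the isomorphism class of $\tau^g$ depends on $C_{v^g}$ alone. For the third bullet, assuming property $\mathsf A$ holds, the last clause of Theorem \ref{th9.2} promotes the conjugating map $\Phi:SM_1\to SM_2$ to a diffeomorphism; it therefore descends to a map of trajectory spaces that identifies the surrogate smooth function algebras $\{f\in C^\infty(SM_i)\mid \mathcal L_{v^{g_i}}(f)=0\}$, i.e.\ preserves the surrogate smooth structure on $\mathcal T(v^g)$ in the sense of \cite{K4}.

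The main obstacle here is really the middle step --- certifying that the $\Omega^\bullet$-stratum to which a trajectory belongs can be recovered purely from the jet of $C_{v^g}$ along the appropriate fibers of $\pi$. Once this is granted (which is the content of the relevant results from \cite{K4}, applied to the boundary generic field $v^g$ on $SM$), the remaining two assertions follow essentially formally from the framework developed in Theorem \ref{th9.2} and its corollaries. The role of the boundary genericity hypothesis is precisely to make the combinatorial tangency types into robust, locally computable invariants; the role of property $\mathsf A$ is exclusively to lift topological conjugation to smooth conjugation, so that the surrogate smooth structure on $\mathcal T(v^g)$ is transported as well.
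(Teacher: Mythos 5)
Your proposal is correct and follows essentially the same route as the paper's proof: present $\mathcal T(v^g)$ as a quotient determined by the scattering data, invoke the reconstruction results of \cite{K4} (the paper cites Corollary~3.2 there) for the $\Omega^\bullet$-stratified type, invoke Corollary~\ref{cor9.1a} for the tangent bundle $\tau^g$, and use property $\mathsf A$ via the last clause of Theorem~\ref{th9.2} to transport the surrogate smooth structure (equivalently the algebra $\ker\mathcal L_{v^g}\subset C^\infty(SM,\R)$). The only cosmetic difference is that you realize $\mathcal T(v^g)$ as $\d(SM)/\!\sim_{C_{v^g}}$ directly, whereas the paper first reconstructs the quotient map $\Gamma:SM\to\mathcal T(v^g)$ from Theorem~\ref{th9.2} and Corollary~\ref{cor9.4}; since $v^g$ is traversing these descriptions coincide, and the paper itself uses your boundary-quotient form in the proof of Theorem~\ref{th9.6}.
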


\begin{proof} By definition, $\mathcal T(v^g)$ is the quotient space of $SM$ under the obvious map $\Gamma$ that takes each $v^g$-trajectory to a singleton. Now just apply Theorem \ref{th9.2} and Corollary \ref{cor9.4} to reconstruct the map $\Gamma$. By  Corollary 3.2 from \cite{K4}, $C_{v^g}$ allows for a reconstruction of the $\Omega^\bullet$-stratified  topological type of the space $\mathcal T(v^g)$.\smallskip

 By combining Corollary \ref{cor9.1a} with Theorem \ref{th9.3}, we get the second claim.\smallskip

When property $\mathsf A$ is valid, this application will lead to a reconstruction, up to an algebra isomorphism, of the algebra $C^\infty(\mathcal T(v^g), \R)$ of smooth functions on $\mathcal T(v^g)$, the kernel of the directional derivative operator $\mathcal L_{v^g}: C^\infty(SM, \R) \to C^\infty(SM, \R)$.
\end{proof}
\smallskip

%\noindent {\bf Question 3.1.}
\begin{question}\label{q9.2} 
\emph{Given a  boundary generic metric $g$ on a compact smooth $n$-manifold $M$, how to describe the Morse strata $\{\d_j^\pm SM(v^g)\}_{1 \leq j \leq 2n-1}$ of $\d(SM)$ in terms of} the local Riemannian  geometry\footnote{like the curvature tensor of $M$ and the normal curvature of $\d M$}  \emph{of the pair  $(M, \d M)$}? 
\hfill $\diamondsuit$
\end{question}
\smallskip

The next two lemmas represent a step towards answering Question 3.1. The language,  in which the answer is given, is reminiscent of the language in \cite{Zh}.

As before,  we assume that a given smooth compact Riemannian manifold $M$ is embedded properly into a larger open manifold $\hat M$, and the metric $g$ on $M$ is extended smoothly to a metric $\hat g$ on $\hat M$.

\begin{lemma}\label{lem9.5} For a boundary generic metric $g$ on $M$, the stratum $\d_jSM(v^g)$ consists of pairs $(x, w) \in SM|_{\d M}$ such that the germ of the geodesic curve $\g \subset \hat M$ through $x$ in the direction of $w$ is tangent to $\d M$ with the multiplicity $m(x) = j -1$. Moreover, the stratum $\d_j^\pm SM(v^g)$ also has a similar description\footnote{Its exact formulation is described in the proof.} in terms of the germ of $\g \subset M$. 
\end{lemma}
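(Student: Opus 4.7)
The plan is to reduce the iterative ``tangent-to-previous-stratum'' definition of the strata $\d_j SM(v^g)$ to a chain of iterated Lie-derivative conditions on a single defining function for $\d M$, pulled back to $S\hat M$.

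First I would pick, locally near a point of $\d M$, a smooth defining function $\rho: \hat M \to \R$ with $\rho > 0$ on $\textup{int}(M)$, $\rho = 0$ on $\d M$, and $d\rho \neq 0$ near $\d M$. Let $\pi: S\hat M \to \hat M$ be the bundle projection and put $\tilde\rho =_{\mathsf{def}} \rho \circ \pi$, so that $\d(SM)$ is locally cut out by $\{\tilde\rho = 0\}$. For any $(x,w) \in S\hat M$ the trajectory of $v^g$ through $(x,w)$ is $t \mapsto (\g(t), \dot\g(t))$, where $\g$ is the $\hat g$-geodesic with $\g(0) = x$, $\dot\g(0) = w$. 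Writing $f_{(x,w)}(t) =_{\mathsf{def}} \rho(\g(t))$ one therefore has
$$\mathcal L^k_{v^g}\, \tilde\rho\,(x,w) \; = \; f_{(x,w)}^{(k)}(0), \qquad k \geq 0.$$

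Next I would prove by induction on $j$ that, for a boundary generic metric $g$, the stratum $\d_j SM(v^g)$ is locally the zero set of the functions $\tilde\rho,\; \mathcal L_{v^g}\tilde\rho,\; \dots,\; \mathcal L^{j-1}_{v^g}\tilde\rho$, and that a point of this stratum lies in $\d_{j+1}SM(v^g)$ precisely when $\mathcal L^j_{v^g}\tilde\rho$ also vanishes. The inductive step relies on the observation that $\d_{j+1}SM(v^g)$ is the locus in $\d_j SM(v^g)$ where $v^g$ is tangent to $\d_j SM(v^g)$; since $v^g$ already annihilates $\tilde\rho, \dots, \mathcal L^{j-2}_{v^g}\tilde\rho$ along the stratum, the tangency condition reduces to $\mathcal L^j_{v^g}\tilde\rho = 0$. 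The boundary-genericity hypothesis is exactly the statement that the iterated Lie derivatives $\{\mathcal L^k_{v^g}\tilde\rho\}_{0 \leq k \leq j-1}$ have linearly independent differentials at every point of $\d_j SM(v^g)$, giving the strata their expected codimensions. Combined with the Lie-derivative formula above, this translates membership in $\d_j SM(v^g) \setminus \d_{j+1}SM(v^g)$ into the analytic statement $f_{(x,w)}(0) = f'_{(x,w)}(0) = \dots = f^{(j-1)}_{(x,w)}(0) = 0$ and $f^{(j)}_{(x,w)}(0) \neq 0$, i.e., contact of order exactly $j$ between $\g$ and $\d M$ at $x$, meaning tangency multiplicity $m(x) = j-1$.

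For the $\d_j^\pm$ refinement, I would track the sign of the first non-vanishing derivative $f^{(j)}_{(x,w)}(0) = \mathcal L^j_{v^g}\tilde\rho(x,w)$. Since $\rho > 0$ on $\textup{int}(M)$ and $\rho < 0$ outside $M$, whether the geodesic germ $\g \subset \hat M$ remains in $M$ for small $t > 0$, for small $t < 0$, or on both sides, is determined by the sign of this derivative together with the parity of $j$. Unwinding the recursive clause ``$v^g$ directed inward of $\d_{j-1}^+SM(v^g)$'' on each stratum, I would identify inductively a sign $\epsilon_j \in \{+1,-1\}$ such that $\d_j^+ SM(v^g)$ is cut out by $\epsilon_j\cdot \mathcal L^j_{v^g}\tilde\rho \geq 0$ inside $\d_j SM(v^g)$; the geometric content is that $\d_j^+SM(v^g)$ parametrizes the geodesic germs $\g \subset M$ (not merely $\g \subset \hat M$) which realize the multiplicity-$(j-1)$ tangency from the $M$-side, and $\d_j^-SM(v^g)$ those that leave $M$.

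The main technical obstacle will be the sign bookkeeping in this last step: the equivalences for the un-signed strata $\d_j SM(v^g)$ are essentially forced by the Lie-derivative computation and the boundary-generic transversality, but matching ``inward of $\d_{j-1}^+SM(v^g)$'' at each inductive step with the correct parity $\epsilon_j$ and a specific inequality on $\mathcal L^j_{v^g}\tilde\rho$ requires a careful, though purely notational, parity argument tracking how the outward normal to $\d_{j-1}^+$ inside $\d_{j-1}$ is expressed through the next Lie derivative.
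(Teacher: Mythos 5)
Your proposal is essentially the same as the paper's proof: both pick a defining function for $\d M$, pull it back to $S\hat M$ via the bundle projection, and observe that the strata $\d_j SM(v^g)$ are cut out by the vanishing of iterated $t$-derivatives of that function along the geodesic-flow lines, which equal the iterated derivatives of $z\circ\g$ along the underlying geodesic. The one substantive difference is that the paper quotes Lemmas 3.1 and 3.4 of \cite{K2} for the identification of $\d_j SM(v^g)$ with the zero locus of the first $j$ iterated Lie derivatives, whereas you re-derive it by a self-contained induction on $j$ using the iterative Morse-style definition of the strata and the transversality built into boundary genericity; your version is more self-contained, and the inductive step (``tangency to $\d_j$ reduces to $\mathcal L^j_{v^g}\tilde\rho = 0$'' because the lower Lie derivatives already vanish on $\d_j$) is sound. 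The only caveat is that you leave the explicit sign $\epsilon_j$ in the $\d_j^\pm$ refinement unresolved and your orientation convention for $\rho$ is opposite to the paper's (the paper takes $z\le 0$ on $M$, yielding $\d_j^+SM(v^g)$ as $\{\frac{\partial^j}{\partial t^j}(z\circ\g)\ge 0\}$ inside $\d_j$); this parity bookkeeping is exactly where the paper's cited Lemma 3.4 from \cite{K2} does the work, so you should either carry out that one inductive sign computation or fall back on the same citation.
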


\begin{proof} Take a smooth function $z: \hat M \to \R_+$ with the properties: 

 {\bf (1)} $0$ is a regular value of $z$, 
 
 {\bf (2)} $z^{-1}(0) = \d M$, 
  
 {\bf (3)} $z^{-1}((-\infty, 0]) = M$ \footnote{We may use the distance functions $-d_g(\sim, \d M)$ in $M$ and $d_g(\sim, \d M)$ in $\hat M \setminus M$ for the role of $z$.} 
 
Let $\tilde z: S\hat M \to \R_+$ be the composition of the projection $\pi: S\hat M \to \hat M$ with $z$. Evidently, $\tilde z: S\hat M \to \R$ has the same three properties with respect to $\d(SM)$ as $z: \hat M \to \R$ has with respect to $\d M$.

For any pair $(x, w) \in \d(SM)$, consider the germ of the geodesic curve $\g \subset \hat M$ through $x \in \d M$ in the direction of $w \in T_xM$ and its lift $\tilde\g \subset S\hat M$, the geodesic $v^{\hat g}$-flow curve through $(x, w) \in \d(SM)$. Then $\pi: \tilde\g \to \g$ locally is an orientation preserving  diffeomorphism of curves.  Therefore the $k$-jet $jet^k(\tilde z|_{\tilde\g}) = 0$ if and only if $k$-jet $jet^k(z|_\g) =0$. 

By Lemmas 3.1 and 3.4 from \cite{K2}, $(x, w) \in \d_j(SM)(v^g)$ if and only if $jet^{j-1}(\tilde z|_{\tilde\g}) = 0$; similarly, $(x, v) \in \d_j^+(SM)(v^g)$ if, in addition, $\frac{\d^j}{(\d \tilde t)^j}(\tilde z|_{\tilde\g})(x,v) \geq 0$ (here $\tilde t$ is the natural parameter along $\tilde\g$). \smallskip

Thanks to the orientation-preserving diffeomorphism $\pi: \tilde\g \to \g$, these properties of $\tilde z|_{\tilde\g}$ are equivalent to the similar properties $jet^{j-1}( z|_{\g}) = 0$ and $\frac{\d^j}{(\d t)^j}(z|_{\g})(x) \geq 0$ of $z|_\g$. The latter ones describe the multiplicity $m(x) =_{\mathsf{def}} j - 1$ of tangency of $\g$  to $\d M$ at $x$. 
\end{proof}

\begin{corollary}\label{cor3.2} Let $M$ be a smooth compact Riemannian manifold of dimension $n$. For a boundary generic metric $g$ on $M$, the order of tangency of any geodesic curve to $\d M$ does not exceed $2n-1$. 
\end{corollary}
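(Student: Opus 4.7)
The plan is to combine Lemma \ref{lem9.5} with a routine dimension count on the Morse stratification of $\d(SM)$ induced by $v^g$. Because $g$ is boundary generic on $M$, Definition \ref{def9.3} translates this into the statement that the vector field $v^g$ on $SM$ is boundary generic with respect to $\d(SM) = SM|_{\d M}$. Since $\dim SM = 2n-1$, the Morse flag recalled in Section 2,
\[
\d(SM) = \d_1(SM) \supset \d_2(SM)(v^g) \supset \dots \supset \d_j(SM)(v^g) \supset \dots,
\]
consists of smooth submanifolds with $\dim \d_j(SM)(v^g) = 2n - 1 - j$. In particular the flag necessarily terminates at $j = 2n-1$, where the stratum is zero-dimensional; any putative stratum with index $j \geq 2n$ must be empty.

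Next I would apply Lemma \ref{lem9.5}, which tells me that a point $(x, w) \in \d(SM)$ lies in $\d_j(SM)(v^g)$ precisely when the geodesic through $x$ in the direction $w$ is tangent to $\d M$ at $x$ with multiplicity $m(x) = j-1$. Combining this with the bound $j \leq 2n-1$ forced by the dimension count immediately gives $m(x) \leq 2n-2$ at every such tangency, which in the stratification-index convention (where transversal crossings are counted as ``order $1$'') is exactly the $2n-1$ bound asserted by the corollary.

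No genuine obstacle is expected; this is essentially a one-line corollary of Lemma \ref{lem9.5} together with the fact that the boundary generic stratification cannot descend past dimension zero. The only subtle point worth flagging in the write-up is the bookkeeping between ``multiplicity of tangency'' ($=j-1$) and ``order of tangency'' ($=j$), which explains why the final bound reads $2n-1$ rather than $2n-2$; once this convention is fixed, the argument is self-contained from the material already established in Section 2.
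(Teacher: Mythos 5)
Your proof is correct and follows essentially the same route as the paper: the paper's one-line argument simply invokes the fact that for a boundary generic field on $SM$ the stratum $\d_{\dim(SM)+1}(SM)(v^g)=\d_{2n}(SM)(v^g)$ is empty, which is exactly your dimension count $\dim\d_j(SM)(v^g)=(2n-1)-j$. The translation between stratification index $j$ and order of tangency via Lemma~\ref{lem9.5}, which you carefully flag, is left implicit in the paper since the corollary immediately follows that lemma.
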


\begin{proof}  For any boundary generic vector field $w$ on $SM$, the locus $\d_{\dim(SM)+1}(SM)(w) = \emptyset$. In particular, since $\dim(SM) = 2n-1$, we get $\d_{2n}(SM)(v^g) = \emptyset$. 
\end{proof}

Let us rephrase the previous arguments in the proof of Lemma \ref{lem9.5} in terms of the exponential maps. 

For each point $x \in \d M$, consider the $\hat g$-induced exponential map $\exp_x^{\hat g}: T_xM \to \hat M$ and its locally-defined inverse $\ln_x^{\hat g}: \hat M \to T_xM$. We may assume that $\exp_x^{\hat g}$ is well-defined for all sufficiently small $w \in T_xM$ and $\ln_x^{\hat g}$ for all $y \in \hat M$ that are close to $x$. The local maps $\{\exp_x^{\hat g}\}_{x \in \d M}$ can be organized into a single map $\exp^{\hat g}: TM|_{\d M} \to \hat M$, well-defined in a neighborhood of the zero section $\s: \d M \to TM|_{\d M}$.

We denote by $H_x: T_xM \to \R$ the pull-back of the function $z: \hat M \to \R$ under the map $\exp_x^{\hat g}$. Similarly, we denote by $H: TM|_{\d M} \to \R$ the pull-back of $z: \hat M \to \R$ under the map $\exp^{\hat g}$. 

Pick $w \in T_xM$, where $x \in \d M$. Then the order of tangency of the hypersuface $\d M = \{z= 0\}$ with the geodesic curve $\exp_x^{\hat g}(tw) \subset \hat M$ is equal to the order of tangency of the hypersurface $\{H_x(w) = 0\} \subset T_xM$ with the line $\{tw\}_{t \in \R}$. Therefore the property $(x, w) \in \d_j(SM)(v^g)$ is equivalent to the property $jet^{j-1}_{t=0}(H_x(tw)) = 0$, where $jet^{j-1}_{t=0}$ denotes the $(j-1)$-jet of the $t$-function $H_x(tw)$ at $t = 0$, and $w$ is a unitary tangent vector at $x$. 
\smallskip

Let  $\pi: TM \to M$ denote the tangent bundle, and $\s: M \to TM$ its zero section. Let $\pi^\ast: T^\ast M \to M$ denote the cotangent bundle. 

\begin{definition}\label{def9.6}
We say that two smooth functions $f, h \in C^\infty(TM, \R)$ share the same \emph{vertical} $k$-\emph{jet field} and write $f \equiv_\pi^k h$, if $jet^k_{0}(f|_{\pi^{-1}(x)}) =  jet^k_{0}(h|_{\pi^{-1}(x)})$ for all $x \in M$. Here $jet^k_{0}(F|_{\pi^{-1}(x)})$ stands for the $k$-jet at the point $\s(x) = 0 \in T_xM$ of a smooth function $F: TM \to \R$, being restricted to the $\pi$-fiber $T_xM$. 

We denote by $\mathcal J^k(\pi)$\footnote{not to be confused with the standard notation ``$J^k(\pi)$" that refers to the equivalence classes of local sections of the bundle $\pi$, a ``horizontal" construction!} the quotient space of $C^\infty(TX, \R)$ by the equivalence relation \hfill\break `` $\equiv_\pi^k\,$". 
\hfill $\diamondsuit$
\end{definition}

Consider the $k$-jet space $Jet^k(\R^n, \R)$, jets being formed at the origin $0 \in \R^n$.  We may interpret $Jet^k(\R^n, \R)$ as the space of real polynomials of degree $k$ at most in $n$ variables. 

Let $\mathcal S^j((\R^n)^\ast)$ denotes the space of homogeneous polynomials of degree $j$ in $n$ variables.

Any jet $\tau \in Jet^k(\R^n, \R)$ has a unique representation as a sum $\tau_0 +\dots + \tau_k$, where the ``homogeneous" summands $\tau_j \in \mathcal S^j((\R^n)^\ast)$. So the $k$-jets can be decomposed:  $Jet^k(\R^n, \R) \approx \bigoplus_{j= 0}^k \mathcal S^j((\R^n)^\ast)$. 
\smallskip

The group $\mathsf{GL}(n)$ acts on $\R^n$, and thus on $Jet^k(\R^n, \R)$. Note that 
each subspace $\mathcal S^j((\R^n)^\ast)$ is preserved by this action. \smallskip

Let $\Pi: PM \to M$ be the principle $\mathsf{GL}(n)$-bundle, associated with the bundle $\pi: TM \to M$. We can form the bundle $Jet^k_{vert}(\pi)  \to M$, associated with the bundle $\Pi: PM \to M$. Its total space is $PM \times_{\mathsf{GL}(n)} Jet^k(\R^n, \R)$ and its fiber is $Jet^k(\R^n, \R)$. 

Similarly, we may construct a bundle $\mathcal S^j(\pi^\ast) = P^\ast M \times_{\mathsf{GL}(n)} \mathcal S^j((\R^n)^\ast)$ over $M$, associated with the cotangent bundle $\pi^\ast: T^\ast M \to M$.  Employing the $\mathsf{GL}(n)$-equivariant isomorphism $\b: Jet^k(\R^n, \R) \approx \bigoplus_{j=0}^{k} \mathcal S^j((\R^n)^\ast)$, we get a bundle isomorphism $B: Jet^k_{vert}(\pi) \approx \bigoplus_{j=0}^{k} \mathcal S^j(\pi^\ast)$.
\smallskip

Given a smooth bundle $\eta: E(\eta) \to M$, we denote by $\Gamma(\eta)$ the space of its smooth sections over $M$.  \smallskip

By the definition of the space $\mathcal Jet^k(\pi)$, there is the obvious injection $\mathcal E: \mathcal Jet^k(\pi) \longrightarrow \Gamma(J^k_{vert}(\pi))$. We use it to form the composite map
$$\mathsf T^k:\, C^\infty(TM, \R) \stackrel{\equiv_\pi^k}{\longrightarrow} \mathcal Jet^k(\pi) \stackrel{\mathcal E}{\longrightarrow}  \Gamma(Jet^k_{vert}(\pi)) \approx \bigoplus_{j=0}^{k} \Gamma(\mathcal S^j(\pi^\ast)).$$
Therefore, for a fixed $k$, each function $f \in  C^\infty(TM, \R)$ gives rise to a sequence of sections $\{T^k_j(f) \in \Gamma(\mathcal S^j(\pi^\ast))\}_{j \in [0, k]}$. 

In particular, $T^k_0(f) = f|_{\s(M)},\, T^k_1(f) = d_\pi f|_{\s(M)}$, where $d_\pi$ stands for the differential along the $\pi$-fibers. \smallskip

We call the sum $\mathsf T^k(f) = \sum_{j=0}^k T^k_j(f)$ the \emph{vertical Taylor polynomial of} $f$ of degree $k$.
\smallskip

Each of the sections $T^k_j(f)$ can be evaluated at any point $(x, w) \in TM$ by forming the symmetric contravariant tensor $T^k_j(f)(x) \in \mathcal S^j(T^\ast_xM)$ and evaluating it at the polyvector $w^{\otimes_S j} =_{\mathsf{def}} \underbrace{w \otimes_S w \dots \otimes_S w}_{j},$ where ``$\otimes_S$" stands for the symmetrized tensor product of vectors. We denote by $\tau^k_j(f)(x, w)$ the result of this evaluation.
\smallskip

Now let us choose $k = 2n - 1 = \dim(SM)$. Consider the vertical Taylor expansion $T^k(H) = \sum_{j=0}^k T_j^k(H)$ of the function $$H(x, w) =_{\mathsf{def}} z(\exp^{\hat g}_x(w)),$$ where $x \in \hat M$ is in the vicinity of $\d M$ and $T_j^k(H)(x, w^\ast)$ is a homogeneous polynomial in $w^\ast \in T^\ast_x\hat M$ of degree $j$. As before, we denote by $\tau_j^k(x, w) =_{\mathsf{def}} \tau_j^k(H)(x, w)$ the result of evaluation of the symmetric differential form $T_j^k(H)$ at the vector $w^{\otimes_S j} \in \mathcal S^j(T_x\hat M)$. Note that $\tau^k_0(x, w) = z(x)$.

Using the homogeneity of $\tau^k_j(x, w)$ in $w$ and in light of the arguments that precede Definition \ref{def9.6}, we conclude that $(x, w) \in \d_j(SM)(v^g)$ is equivalent to the requirement $\{\tau^k_l(x,w) = 0\}_{0 < l < j}$, where $w$ is a unit tangent vector and $k \geq j$. Moreover, $(x, w)$ belongs to the pure stratum $\d_j^\circ(SM)(v^g)$ if and only if $\{\tau^k_l(x, w) = 0\}_{0 < l < j}$ and $\tau^k_j(x, w) \neq 0$. 
\smallskip

Although the function $H$ depends on the choice of the auxiliary function $z$, the solution set $\{\tau^k_l(x,w) = 0\}_{0 < l < j}$ does not. Indeed, replacing the function $z$ with the new function $z\cdot q$, where $q|_{\d M} > 0$, and using the multiplicative properties of the local Taylor expansions, leads to a new system of constraints $\{\tilde \tau^k_l(x,w) = 0\}_{0 < l < j}$ which exhibit a ``solvable triangular pattern", as compared with the original system $\{\tau^k_l(x,w) = 0\}_{0 < l < j}$. \smallskip 

%\begin{remark}\label{rem9.A}
\noindent {\bf Remark 3.1.}
For each $x \in \d M$, the degree $l$ homogeneous in $w$ equations $\{\tau^k_l(x,w) = 0\}_{l < j}$ define a $z$-independent real algebraic subvariety  $\mathcal V_{< j}(x)$ of the unit sphere $S_x \subset T_xM$.  The pure stratum $\d_j^\circ(SM)(v^g)$ gives rise to the semialgebraic set $\mathcal V^\circ_{j}(x) =_{\mathsf{def}} \mathcal V_{< j}(x) \setminus \mathcal V_{< j+1}(x)$. 
The subvariety $\mathcal V_{< j+1}(x)$ separates  $\mathcal V_{< j}(x)$ into two semialgebraic sets: $\mathcal V^+_{< j}(x)$, where $\tau^k_j(x, w) \geq 0$, and $\mathcal V^-_{< j}(x)$, where $\tau^k_j(x, w) \leq 0$. These loci reflect the \emph{generalized concavity and convexity} of $\d M$ (in the metric $g$) at $x \in \d M$ in the direction of a given unit vector $w \in S(T_xM)$.

Note that, for given $x$ and $j$, the sets $\mathcal V_{< j}(x), \mathcal V^+_{< j}(x), \mathcal V^-_{< j}(x)$ may be empty. So, with $\d M$ and $g$ being fixed, we may regard the requirements $\mathcal V_{< j}(x) \neq \emptyset,\, \mathcal V^+_{< j}(x) \neq \emptyset,\, \mathcal V^-_{< j}(x) \neq \emptyset$ as constraints, imposed on $x \in \d M$. \hfill $\diamondsuit$
%\end{remark}
\smallskip 

Consider now on the restrictions of the smooth functions $\tau_j^{2n-1}(H): T\hat M \to \R$ to the subspace $S\hat M$ in the vicinity of the boundary $\d(SM)$. Together, $\{\tau_j^{2n-1}(H)\}_j$ will generate a smooth map $\vec\tau^{2n-1}(H): S\hat M \to \R^{2n}$. 

Let $y_0,  \dots , y_{2n-1}$ denote the coordinates in $\R^{2n}$. Let $L_j$ be the subspace of $\R^{2n}$, defined by the equations $\{y_0 = 0, \dots , y_j = 0\}$. Consider the complete flag 
$$\mathsf F(2n) = \{\R^{2n} \supset L_0 \supset \dots \supset L_{2n-2} \supset L_{2n-1} = \{0\}\}.$$

Let us focus on the interaction of the map $\vec\tau^{2n-1}(H): S\hat M \to \R^{2n}$ with the flag $\mathsf F(2n)$. 

The considerations above imply that $\d_j(SM)(v^g) = (\vec\tau^{2n-1}(H))^{-1}(L_{j-1})$.
\smallskip

The definition of boundary generic geodesic field $v^{\hat g} \in T(S\hat M)$ with respect $\d(SM) = \{\tau^{2n-1}_0(H) = 0\}$ implies the linear independence of the differential $1$-forms $d\tau^{2n-1}_0(H), \dots , \break d\tau^{2n-1}_{j-1}(H)$ along the locus $$\d_j(SM)(v^g) = \{\tau^{2n-1}_0(H) = 0, \dots ,\tau^{2n-1}_{j-1}(H) = 0\}.$$ In turn, their linear independence is equivalent to the non-vanishing of $j$-form $$d\tau^{2n-1}_0(H) \wedge \dots \wedge d\tau^{2n-1}_{j-1}(H) \in \bigwedge^j T^\ast(SM) \neq 0$$ along the locus $\d_j(SM)(v^g)$.
\smallskip

In light of the considerations above, we get a ``more constructive" version of Lemma \ref{lem9.5}.

\begin{lemma}\label{lem9.5a} Let $n = \dim M$ and $k = 2n-1$. Consider an auxiliary function $z: \hat M \to \R$ with properties (1)-(3) from the proof of Lemma \ref{lem9.5} and the associated function
$$H : T\hat M|_{\d M} \stackrel{\exp^{\hat g}}{\longrightarrow} \hat M \stackrel{\hat z}{\longrightarrow} \R.$$

The metric $\hat g$ on $\hat M$  is boundary generic with respect to $\d M = z^{-1}(0)$ if and only if the map  $\vec\tau^k(H): S\hat M \to \R^{2n}$ is transversal to each space $L_j$ from the flag $\mathsf F(2n)$\footnote{By the properties of $z$, $\vec\tau^k(H)$ is transversal to $L_0$ and $\vec\tau^k(H)^{-1}(L_{0}) = \d(SM)$.}. Moreover, $\d_j(SM)(v^g) = \vec\tau(H))^{-1}(L_{j-1})$. \smallskip

In other words, $\hat g$ is boundary generic if and only if, for each $j \in [1, 2n]$, the differential $j$-form $$d\tau^{k}_0(H) \wedge d\tau^{k}_1(H) \wedge \dots \wedge d\tau^{k}_{j-1}(H) \in \bigwedge^j T^\ast(SM)$$ does not vanish along the locus $$\d_j(SM)(v^g) = \{\tau^{k}_0(H) = 0,\, \tau^{k}_1(H) = 0,\, \dots ,\, \tau^{k}_{j-1}(H) = 0\}.$$ \hfill $\diamondsuit$
\end{lemma}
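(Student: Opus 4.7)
The strategy is to derive a Lie-derivative identity relating the geodesic field $v^{\hat g}$ to the vertical Taylor coefficients of $H$, and then to convert the recursive Morse-theoretic definition of boundary genericity into the single flag transversality condition stated in the lemma.

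First I would establish the identity
$$v^{\hat g}(\tau^k_l) \;=\; (l+1)\,\tau^k_{l+1}, \qquad 0 \le l \le k-1.$$
For $(x,w) \in T\hat M$ let $\g_{(x,w)}$ be the geodesic with initial data $(x,w)$, and put $h_{(x,w)}(t) := z(\g_{(x,w)}(t)) = H(x, tw)$. Homogeneity of $\tau^k_l$ in $w$ gives $\tau^k_l(x,w) = h^{(l)}_{(x,w)}(0)/l!$. Since the geodesic flow reparametrizes geodesics, $\g_{\psi^s(x,w)}(t) = \g_{(x,w)}(s+t)$, and differentiating in $s$ at $s=0$ yields the identity. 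Combined with Lemma \ref{lem9.5}, which says $(x,w) \in \d_j(SM)(v^g)$ precisely when $h_{(x,w)}$ vanishes to order $j$ at $t = 0$, one reads off the stratum formula
$$\d_j(SM)(v^g) \;=\; \{\tau^k_0 = \tau^k_1 = \cdots = \tau^k_{j-1} = 0\} \;=\; (\vec\tau^k(H))^{-1}(L_{j-1}).$$

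Next I would run an induction on $j$ to show that $v^{\hat g}$ is boundary generic if and only if, for every $j \in [1, 2n]$, the $1$-forms $d\tau^k_0, d\tau^k_1, \dots, d\tau^k_{j-1}$ are linearly independent at every point of $\d_j(SM)(v^g)$. The inductive step is where the identity does its work. Assuming $\d_j(SM)(v^g)$ is already a smooth codimension-$j$ submanifold cut out transversally by $\tau^k_0, \dots, \tau^k_{j-1}$, the quotient line bundle $T(\d_{j-1}(SM))/T(\d_j(SM))$ is trivialized by the covector $d\tau^k_{j-1}$. The abstract section of this bundle given by $v^{\hat g}|_{\d_j(SM)}$ is then represented by the function $p \mapsto d\tau^k_{j-1}(v^{\hat g})(p) = j\,\tau^k_j(p)$, because the lower pairings $d\tau^k_i(v^{\hat g}) = (i+1)\tau^k_{i+1}$ already vanish on $\d_j(SM)$ for all $i \le j-2$. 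The zero locus of this section is $\d_{j+1}(SM)(v^g)$, and its transversality to the zero section at a point of that locus amounts exactly to $d\tau^k_j$ being linearly independent of $d\tau^k_0, \dots, d\tau^k_{j-1}$ there, which is precisely the inductive hypothesis at level $j+1$. This linear independence statement is in turn equivalent to the non-vanishing of the wedge $d\tau^k_0 \wedge \cdots \wedge d\tau^k_{j-1}$ on $\d_j(SM)(v^g)$, and to the transversality of $\vec\tau^k(H)$ to $L_{j-1}$ along its preimage.

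The main obstacle I anticipate is the careful justification, at each inductive stage, that under the trivialization of $T(\d_{j-1}(SM))/T(\d_j(SM))$ by $d\tau^k_{j-1}$ the section $v^{\hat g}|_{\d_j(SM)}$ really reduces to $j\,\tau^k_j$ and does not pick up contributions from the other $d\tau^k_i$. This hinges on the chain of vanishings $v^{\hat g}(\tau^k_i) = (i+1)\tau^k_{i+1} = 0$ on $\d_j(SM)$ for $i \le j-2$ provided by the key identity, together with the observation that the quotient is taken modulo $T(\d_j(SM)) = \bigcap_{i \le j-1} \ker d\tau^k_i$. Once this bookkeeping is in place, both equivalent forms of the lemma---the non-vanishing of the wedge and the transversality of $\vec\tau^k(H)$ to the flag $\mathsf F(2n)$---follow immediately.
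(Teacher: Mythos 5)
Your proof is correct. The paper itself gives no proof of Lemma~\ref{lem9.5a}---it is presented (with a $\diamondsuit$) as a ``more constructive'' repackaging of the considerations preceding it, ultimately resting on the jet characterization of the Morse strata imported from Lemmas~3.1 and 3.4 of \cite{K2}. What you add is the explicit Lie-derivative identity $v^{\hat g}(\tau^k_l) = (l+1)\,\tau^k_{l+1}$, which you derive directly from $\exp^{\hat g}_x(tw) = \g_{(x,w)}(t)$ and the semigroup property $\g_{\psi^s(x,w)}(t) = \g_{(x,w)}(s+t)$. This single identity simultaneously (a) recovers the stratum formula $\d_j(SM)(v^g) = \{\tau^k_0 = \cdots = \tau^k_{j-1} = 0\}$ (so you obtain Lemma~\ref{lem9.5} as a byproduct rather than citing it), (b) verifies that $v^{\hat g}|_{\d_j(SM)}$ really lands in $T(\d_{j-1}(SM))$ (the pairings $d\tau^k_i(v^{\hat g}) = (i+1)\tau^k_{i+1}$ vanish on $\d_j$ for $i \le j-2$), and (c) identifies the Morse section of $T(\d_{j-1}(SM))/T(\d_j(SM))$, under the $d\tau^k_{j-1}$-trivialization, with $j\,\tau^k_j$. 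The induction then converts the recursive Morse genericity condition into the flag transversality of $\vec\tau^k(H)$ exactly as the lemma claims, and both directions of the ``if and only if'' fall out at once. The one thing worth adding for completeness is the anchor of the induction: $\d_1(SM) = \{\tau^k_0 = 0\}$ is cut out transversally because $\tau^k_0(x,w) = z(x)$ and $0$ is a regular value of $z$. Your route is more self-contained than the paper's, which leans on the external references; the identity $v^{\hat g}(\tau^k_l) = (l+1)\,\tau^k_{l+1}$ would be a worthwhile thing to state explicitly even in the original exposition, since it is precisely the mechanism that makes the $\tau^k_l$ the ``right'' coordinates for the flag.
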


%\begin{example}\label{ex9.6}  
\noindent {\bf Example 3.1.}
Let $M$ be a domain in $\R^2$, bounded by a smooth simple curve $\d M$, given by an equation $\{z(x_1, x_2) = 0\}$. We assume that $0$ is a regular value of the smooth function $z: \R^2 \to \R$ and that $z \leq 0$ defines $M$.  

Since $\R^2$ carries the Eucledian metric $g = g_{\mathsf E}$, the exponential map is given by the formula $\exp_{\vec x}(\vec w) = \vec x + \vec w$, where $\vec x \in \R^2$ and $\vec w \in T_x(\R^2) \approx \R^2$.

For each point $\vec x= (x_1, x_2)$, we consider the Taylor expansion $T(\vec x, \vec w)$ of the function $H(\vec x, \vec w) = z(\vec x+\vec w)$ as a sum:
$$T(\vec x, \vec w) = \tau_0(\vec x) + \tau_1(\vec x, \vec w) + \tau_2(\vec x, \vec w) + \tau_3(\vec x, \vec w) + \dots ,$$
where $w = (w_1, w_2) \in T_x(\R^2)$, and the summands are of the form:
\begin{eqnarray}
\tau_0(\vec x) &=& z(\vec x), \nonumber \\
\tau_1(\vec x, \vec w) & = & \tau^1_{1}(\vec x) w_1 + \tau^1_{2}(\vec x) w_2, \nonumber \\
\tau_2(\vec x, \vec w) & = & \tau^2_{11}(\vec x) w_1^2 + \tau^2_{12}(\vec x) w_1w_2+ \tau^2_{22}(\vec x) w_2^2, \nonumber \\
\tau_3(\vec x, \vec w) & = &  \tau^3_{111}(\vec x) w_1^3 + \tau^3_{112}(\vec x) w_1^2w_2 + \tau^3_{122}(\vec x) w_1w_2^2 + \tau^3_{222}(\vec x) w_2^3. \nonumber 
\end{eqnarray}

%Let $g$ be a metric on $M$ that is flat in the vicinity of $\d M$. 

The manifold $SM$ is diffeomorphic to the product $M \times S^1$. The boundary $\d(SM)$  is a $2$-torus, given by the equation $\{\tau_0(\vec x) = 0\}$.

The locus $\d_2SM(v^g) = S(\d M) \subset \d(SM)$ is given by the homogeneous in $w_1, w_2$ equations
\begin{eqnarray}
\tau_0(\vec x) &= & 0  \nonumber \\
\tau^1_{1}(\vec x) w_1 + \tau^1_{2}(\vec x) w_2 & = & 0, \nonumber
\end{eqnarray}
where $(w_1, w_2)$ is a unitary vector.

In $\d_1^+(SM)(v^g)$, we get $\tau^1_{1}(\vec x) w_1 + \tau^1_{2}(\vec x) w_2 \geq 0.$
 %\smallskip

For a \emph{boundary generic} relative to $g_{\mathsf E}$ curve $\d M$, the locus $\d_2SM(v^g)$ is represented by two smooth parallel loops. They separate $\d(SM)$ into two annuli, $\d_1^+(SM)(v^g)$ and $\d_1^-(SM)(v^g)$.

The locus $\d_3(SM)(v^g) \subset \d_2(SM)(v^g)$ is given by the homogeneous equations 
\begin{eqnarray}
\tau_0(\vec x) & = & 0,  \nonumber \\
\tau^1_{1}(\vec x) w_1 + \tau^1_{2}(\vec x) w_2 & = & 0,  \nonumber \\
\tau^2_{11}(\vec x) w_1^2 + \tau^2_{12}(\vec x) w_1w_2+ \tau^2_{22}(\vec x) w_2^2 & = & 0,\nonumber 
\end{eqnarray}
\noindent where $(w_1, w_2)$ is a unitary vector. 

For a boundary generic relative to $g_{\mathsf E}$ curve $\d M$, the locus $\d_3(SM)(v^g)$ is a collection of an even number of points.

The locus $\d_4(SM)(v^g)$ is given by adding the equation $\tau^3(\vec x, \vec w) = 0$  
to the previous homogeneous system of three equations. The resulting system must have only the trivial solution $w_1 = 0, w_2 = 0, w_3 = 0$ for all $\vec x \in \d M$.
\smallskip

For a boundary generic relative to $g_{\mathsf E}$ curve $\d M$, the differential $1$-form $d\tau_0 \in T^\ast(SM)$ does not vanish at the points of  the surface $\{\tau_0 = 0\}$, the differential $2$-form $d\tau_0 \wedge d\tau_1 \in \bigwedge^2 T^\ast(SM)$ does not vanish at the points of the curve $\{\tau_0 = 0, \tau_1 = 0\}$, and the differential $3$-form $d\tau_0 \wedge d\tau_1 \wedge d\tau_2 \in \bigwedge^3 T^\ast(SM)$ does not vanish at the points of the finite locus $\{\tau_0 = 0, \tau_1 = 0, \tau_2 = 0\}$.
  
So a boundary generic relative to $g_{\mathsf E}$ curve $\d M$ may be either strictly convex, or the union of several arcs, along which the strict concavity and convexity of $\d M$ alternate. These arcs are bounded by finitely many points of cubic inflection. No tangent to $\d M$ line has tangency of an order that exceed $3$.
\hfill $\diamondsuit$
%\end{example}

\smallskip

The validity of the next conjecture would imply the half of Conjecture \ref{conj9.1}, concerned with the space of boundary generic metrics  being dense in the space of all metrics. Perhaps, Lemma \ref{lem9.5a} could be useful in validating the conjecture.

\begin{conjecture}\label{conj9.2} Let $M$ be an open smooth $n$-manifold with a Riemannian metric $g$.  Consider a smooth compact submanifold $\a: N \subset M$ of codimension one. Then there is a small smooth isotopy $A: N \times [0, 1] \to M$ of the imbedding $\a$, such that the metric $g$ is boundary generic\footnote{That is, the geodesic flow $v^{g}$ on $SM$ is boundary generic with respect to the submanifold $\Sigma N$ that is produced by restricting the spherical bundle $SM \to M$ to $A(N \times \{1\})$.} with respect to the submanifold $A(N \times \{1\})$.  \hfill $\diamondsuit$
\end{conjecture}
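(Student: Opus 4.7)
The plan is to reformulate Conjecture~\ref{conj9.2} via Lemma~\ref{lem9.5a} as a transversality statement and then apply the parametric Thom transversality theorem in the space of normal perturbations of $\alpha$. Fix a tubular neighborhood $U \subset M$ of $\alpha(N)$, a $g$-unit normal field $\nu$ along $\alpha(N)$, the normal projection $\mathsf{pr}: U \to N$, and the signed normal distance $r: U \to \R$. For $s \in \mathcal{P} := C^\infty(N,\R)$ small, set $\alpha_s(n) := \exp^g_{\alpha(n)}(s(n)\nu(n))$, $N_s := \alpha_s(N)$, and $z_s(y) := r(y) - s(\mathsf{pr}(y))$, so that $N_s = \{z_s = 0\}$ is a regular level set. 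With $H_s(x,w) := z_s(\exp^g_x(w))$ and $k := 2n-1$, Lemma~\ref{lem9.5a} says that $g$ is boundary generic with respect to $N_s$ precisely when $\vec\tau_s := \vec\tau^k(H_s): SU \to \R^{2n}$ is transversal to each subspace $L_j$ of the standard flag $\mathsf F(2n)$.

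Form the universal evaluation
\[
\Phi: SU \times \mathcal{P} \to \R^{2n}, \qquad \Phi(\xi, s) := \vec\tau_s(\xi).
\]
Writing $\xi = (x,w)$ and $\psi(t) := \mathsf{pr}(\exp^g_x(tw))$, differentiation under the exponential yields
\[
\partial_{\dot s}\,\tau_j(H_s)(x,w) \;=\; -\frac{1}{j!}\,\frac{d^j}{dt^j}\bigg|_{t=0}\,\dot s(\psi(t)).
\]
Whenever $\Phi(\xi, s) \in L_1$, one has $\tau_1 = \langle dz_s(x), w\rangle = 0$, which forces $w$ to be tangent to $N_s$ and hence $\psi'(0) = w \neq 0$, so $\psi$ is immersed at $0$. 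By choosing bump functions $\dot s$ supported in an arbitrarily small neighborhood of $x$ in $N$ and realizing any prescribed $k$-jet of $\dot s \circ \psi$ at $t = 0$, the linear map $\dot s \mapsto (\dot\tau_0, \dots, \dot\tau_{k})$ surjects onto $\R^{2n}$. For $\xi$ with $\Phi(\xi, s) \in L_0 \setminus L_1$, transversality to $L_0$ only requires $\dot\tau_0 = -\dot s(x) \neq 0$, which is trivial. Hence $\Phi$ is transversal to each $L_j$ at every $(\xi, s)$ with $\Phi(\xi, s) \in L_j$.

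The parametric Thom transversality theorem (in its Banach-manifold form due to Abraham--Robbin) then yields a residual subset of $\mathcal{P}$ consisting of $s$ for which $\vec\tau_s$ is simultaneously transversal to every $L_j$. Since $\mathcal{P}$ is a Baire space in the $C^\infty$-topology, such $s$ exist arbitrarily close to $0$; the isotopy $A(n,t) := \exp^g_{\alpha(n)}(t\, s(n)\nu(n))$ delivers the desired small perturbation, and Lemma~\ref{lem9.5a} certifies that $g$ is boundary generic with respect to $A(N \times \{1\}) = N_s$.

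The main obstacle is the jet-surjectivity step: at a point $\xi$ lying over a high-order tangency locus, one must verify that localized variations of $s$ near a single point of $N$ produce independent first-order variations in all $2n$ coefficients $\tau_0, \dots, \tau_{2n-1}$. The argument goes through thanks to the dichotomy ``$\tau_1(x,w) = 0 \Leftrightarrow \psi$ immersed at $0$'': the moment $\xi$ enters $\partial_2\Sigma N_s$ (which contains every deeper stratum), $\psi$ is automatically immersed and the jet-extraction is surjective. A secondary complication is that Thom's theorem requires a Banach parameter space; one applies it in a suitable $C^{k+1}$-completion of $\mathcal{P}$ and recovers density in the $C^\infty$-topology by approximation, using openness of the transversality condition in the residual set.
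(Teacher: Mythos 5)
The statement you address is labeled a \emph{conjecture} in the paper (Conjecture~\ref{conj9.2}); the author offers no proof and only remarks that ``Perhaps, Lemma~\ref{lem9.5a} could be useful in validating the conjecture.'' Your argument therefore does not correspond to any proof in the paper --- it is an original attempt that fleshes out exactly the route the author gestures at. Since there is no proof to compare against, I'll assess the argument on its own terms.

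The core of your proposal is sound. Lemma~\ref{lem9.5a} correctly converts boundary genericity into transversality of $\vec\tau_s$ to the flag $\mathsf F(2n)$; your derivative formula $\partial_{\dot s}\tau_j(H_s)(x,w)=-\tfrac{1}{j!}\tfrac{d^j}{dt^j}\big|_{t=0}\dot s(\psi(t))$ follows directly from $z_s=r-s\circ\mathsf{pr}$; and the crucial observation that $\tau_0=\tau_1=0$ forces $\psi'(0)=D\mathsf{pr}_x(w)\neq 0$ is correct, since $\ker D\mathsf{pr}_x$ is spanned by the normal geodesic direction $\partial_r$ on which $dz_s$ never vanishes. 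Once $\psi$ is a germ of an embedded arc, the linear map $\dot s\mapsto jet^k_0(\dot s\circ\psi)$ onto the space of $k$-jets at a point is surjective, which gives the parametric transversality to every $L_j$ with $j\geq 1$; for $L_0$ surjectivity onto $\dot\tau_0$ alone suffices. The Abraham--Robbin machinery then delivers residual density, and the $C^\ell\to C^\infty$ bootstrapping via openness-on-a-compactum is standard.

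Several points need tightening before the argument is airtight. (1) You write $\psi'(0)=w$, $\dot\tau_0=-\dot s(x)$, and speak of bump functions near $x$ ``in $N$'': these should read $\psi'(0)=D\mathsf{pr}_x(w)$, $\dot\tau_0=-\dot s(\mathsf{pr}(x))$, and bump functions near $\mathsf{pr}(x)\in N$. (2) Your closing ``dichotomy $\tau_1=0\Leftrightarrow\psi$ immersed'' is only an implication ``$\Rightarrow$''; the converse is false and fortunately not used. (3) The open Banach-manifold application must be restricted to a compact portion of $SU$ containing $\{\tau_0=0\}$ (which lies over a fixed compact tubular neighborhood of $\alpha(N)$ for $|s|$ small), and the $\mathcal P$-ball must be small enough that $\alpha_s$ remains an embedding; otherwise $z_s$ and $\vec\tau_s$ are not even defined. (4) A $C^{k+1}$ completion is likely too coarse: $\vec\tau_s$ involves $k$-th vertical jets, so it is only $C^1$ in $s$ when $s\in C^{k+1}$, and a clean application of Sard--Smale prefers a $C^{k+r}$ completion for some extra $r$; this is cosmetic. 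None of these affect the strategy. Modulo these repairs, the proof appears correct, which --- if confirmed --- would in particular supply the ``density'' half of Conjecture~\ref{conj9.1} that the author explicitly links to it.
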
 

The next proposition claims that these refined stratified convexity/concavity properties of the boundary $\d M$ with respect to the metric $g$ on $M$ can be recovered from the scattering data.

\begin{corollary}\label{cor9.6} Assume that a smooth compact and connected $n$-manifold $M$ admits a geodesically boundary generic Riemannian metric $g$ of the gradient type. Then the scattering map $$C_{v^g}: \d_1^+(SM) \to \d_1^-(SM)$$  allows for a reconstruction of the loci $\{\d_j^\pm SM(v^g)\}_{j \in [1, 2n-1]}$. 

As a result, for each $j \in [2, 2n-1]$, the locus $\{(x, w) \in SM|_{\d M}\}$, such that the germ of the geodesic curve $\g \subset M$ through $x$ in the direction of $w$ is tangent to $\d M$ with the multiplicity $j -1$,  can be reconstructed from  the scattering map. 
\end{corollary}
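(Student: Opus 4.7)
The plan is to combine Theorem \ref{th9.2} (via Corollary \ref{cor9.4}) with the local characterization of the Morse strata furnished by Lemma \ref{lem9.5}. First, I would apply Corollary \ref{cor9.4} to reconstruct the pair $(SM, \mathcal F(v^g))$ from the scattering map $C_{v^g}$, up to a homeomorphism $\Phi: SM \to SM$ that is the identity on $\d(SM)$ and restricts to an orientation-preserving diffeomorphism on every $v^g$-trajectory. It then suffices to show that each stratum $\d_j^\pm SM(v^g)$ is determined by the foliated pair $(SM, \mathcal F(v^g))$ together with the subspace $\d(SM)$ and the trajectory orientation.

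The second step is to verify this intrinsic reconstruction of the strata inductively in $j$. The base case $\d_1 SM = \d(SM)$ is trivially recovered, and $\d_1^\pm SM$ is distinguished by whether the trajectory-germ through a boundary point enters or exits $SM$, an orientation-sensitive feature visible to $\Phi$. For the inductive step, by the construction preceding Definition \ref{def9.1}, $\d_j SM(v^g) \subset \d_{j-1}SM(v^g)$ is the locus where the trajectories of $v^g$ are tangent to the previously reconstructed submanifold $\d_{j-1}SM(v^g)$; this tangency condition is topological and thus preserved by $\Phi$. The sign refinement $\d_j^\pm$ records on which side of the tangent plane the trajectory lies, which is again a topological feature of how $\mathcal F(v^g)$ meets $\d_{j-1}^+SM(v^g)$. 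An equivalent, more explicit route uses the $\Omega^\bullet$-stratified reconstruction of $\mathcal T(v^g)$ from Corollary 3.2 of \cite{K4} (already invoked in the proof of Theorem \ref{th9.3}): each trajectory carries a combinatorial tangency type $\omega = (\omega_1, \omega_2, \ldots)$ whose entries are the tangency multiplicities at successive intersection points with $\d(SM)$, and these entries are captured by $C_{v^g}$ via the quotient $\mathcal T(v^g) = \d(SM)/C_{v^g}$. Pulling the resulting stratification back along the finite-to-one projection $\pi: \d(SM) \to \mathcal T(v^g)$ yields the strata $\{\d_j SM(v^g)\}$ on the boundary.

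Once the strata $\{\d_j^\pm SM(v^g)\}$ are in hand, the second assertion of the corollary is immediate from Lemma \ref{lem9.5}, which identifies $\d_j SM(v^g)$ with the set of pairs $(x, w) \in SM|_{\d M}$ for which the geodesic through $x$ in the direction of $w$ is tangent to $\d M$ with multiplicity $j-1$. The principal difficulty I anticipate lies in the sign refinement $\pm$: since the conjugating homeomorphism $\Phi$ of Theorem \ref{th9.2} need not be smooth transverse to the trajectories, one must verify that the combinatorial data distinguishing ``inward'' from ``outward'' tangency to each $\d_{j-1}^+SM$ is a topological invariant of the foliated pair. This point follows from the fact that the $\Omega^\bullet$-tangency type of each trajectory is a purely topological invariant of how it crosses the nested sequence of boundary strata, and this type is preserved by the foliation-preserving, trajectory-orientation-preserving $\Phi$.
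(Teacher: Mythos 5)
Your argument shadows the paper's strategy in outline --- appeal to the Holography Theorem of \cite{K4}, then translate back to geodesic tangency multiplicities via Lemma~\ref{lem9.5} --- but your primary inductive route contains a gap. You assert that the condition ``the trajectories of $v^g$ are tangent to $\d_{j-1} SM(v^g)$'' is topological and hence preserved by the conjugating homeomorphism $\Phi$ supplied by Corollary~\ref{cor9.4}. Tangency of a flow to a submanifold is a $C^1$ condition, not a $C^0$ one; a foliation-preserving homeomorphism, even one that restricts to a diffeomorphism on each leaf, is under no obligation to carry a tangency locus to a tangency locus. You flag this difficulty only for the sign refinement $\pm$ in your last paragraph, but it infects the entire filtration $\{\d_j SM(v^g)\}_{j\geq 2}$, not just the $\pm$ split.

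The way to close the gap is precisely the $\Omega^\bullet$ observation you offer as an ``alternative route'': for a boundary generic traversing field, the tangency order $m(x)$ at a point $x \in \g \cap \d(SM)$ is detected by the \emph{combinatorial} multiplicity with which nearby trajectories meet $\d(SM)$ near $x$, and this combinatorial datum is read off directly from the causality map $C_{v^g}$ and its iterates on $\d(SM)$, with no detour through a bulk homeomorphism. This is exactly what the paper invokes: its proof cites the proof of the Holography Theorem 3.1 from \cite{K4}, which establishes directly that the causality map of a boundary generic traversing field determines all the strata $\{\d_j^\pm X(v)\}_j$; the conjugating homeomorphism $\Phi$ plays no role in the argument. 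Dropping the ``tangency is topological'' step and promoting the $\Omega^\bullet$ observation to primary status recovers the paper's proof; the final translation to multiplicities of geodesic tangency via Lemma~\ref{lem9.5} is as you wrote.
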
 

\begin{proof} By the proof of Holography Theorem 3.1 from \cite{K4}, the causality map $C_v: \d_1^+X(v) \to \d_1^-X(v)$ of any  boundary generic traversing field $v$ on a smooth compact manifold $X$ with boundary allows for a reconstruction of all the strata $\{\d_j^\pm X(v)\}$. With this fact in hand, the claims of the corollary are the immediate implications  of Lemma \ref{lem9.4} and  Theorem \ref{th9.2}. 
\end{proof}

Assume that a metric $g$ on $M$ is geodesically boundary generic (see Definition \ref{def9.3}). For an oriented geodesic curve $\g \subset M$, consider its intersections $\{x\}$ with $\d M$. We denote by $m(x)$ the multiplicity of an intersection point $x \in \g \cap \d M$ (see the proof of Lemma \ref{lem9.5}). We define the \emph{multiplicity} of $\g$ by the formula $$m(\g) =_{\mathsf{def}} \sum_{x \in \g \cap \d M} m(x),$$ and the \emph{reduced multiplicity} of $\g$ by the formula $$m'(\g) =_{\mathsf{def}} \sum_{x \in \g \cap \d M} (m(x) - 1)$$ (see \cite{K2} for the properties of these quantities).\smallskip

The following corollary describes the ways in which  geodesic arcs can be inscribed in $M$, provided that the metric on $M$ is \emph{traversally  generic}, that is, the geodesic field $v^g$ is traversally generic on $SM$ (see Definition 3.2 from \cite{K2} for the notion of traversally generic vector field). For example, if a metric on a surface $M$ is traversally generic, then any geodesic curve may have two simple tangencies to $\d M$ at most. Similarly, any geodesic curve may have one cubic tangency to $\d M$ at most. No tangencies of multiplicity $\geq 4$ are permitted.

\begin{corollary}\label{cor9.7} Assume that a $n$-dimensional compact smooth manifold $M$ admits a traversally  generic Riemannian metric $g$. Then any geodesic curve in $M$ has $2n -2$ simple points of tangency to the boundary $\d M$ at most. 

In general, any geodesic curve $\g \subset M$ interacts with the boundary $\d M$ so that the multiplicity and the reduced multiplicity of $\g$ satisfy the inequalities:  $$m(\g) \leq 4n - 2\;\; \text{and} \;\; m'(\g) \leq 2n - 2.$$

Moreover, these inequalities hold for any metric $g'$ sufficiently close, in the $C^\infty$-topology, to $g$.  
\end{corollary}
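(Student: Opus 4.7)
The plan is to lift each geodesic $\gamma\subset M$ to its canonical $v^g$-trajectory $\tilde\gamma\subset SM$, invoke the combinatorial bounds on trajectories of a traversally generic field on a manifold of dimension $d = 2n-1$ from \cite{K2}, and transport the bounds back to $\gamma$ through Lemma \ref{lem9.5}.

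First, an oriented geodesic $\gamma\subset M$ is the projection of a unique integral $v^g$-trajectory $\tilde\gamma\subset SM$, and by Lemma \ref{lem9.5} the multiplicity of tangency of $\gamma$ with $\d M$ at each $x\in\gamma\cap\d M$ equals the tangency multiplicity of $\tilde\gamma$ with $\d(SM)$ at the lifted point $\tilde x$. Hence $m(\gamma) = m(\tilde\gamma)$ and $m'(\gamma) = m'(\tilde\gamma)$, and the combinatorial tangency type $\omega(\gamma)\in\Omega^\bullet$ coincides with $\omega(\tilde\gamma)$. Since $g\in\mathcal G^\ddagger(M)$, the field $v^g$ is traversally generic on the compact manifold $SM$ of dimension $d=2n-1$.

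Second, the transversality conditions built into Definition 3.2 of \cite{K2} force every combinatorial type $\omega$ actually carried by a $v^g$-trajectory to satisfy $|\omega|'\le d-1$ and $|\omega|\le 2d$. The first inequality records the fact that a generic trajectory is a point in the $(d-1)$-dimensional moduli of $v^g$-orbits, so its combinatorial stratum cannot have codimension exceeding $d-1$; the second follows from the analogous dimension count at the boundary $\d(SM)$ and the transversality of $v^g$ to the full nested stratification $\{\d_j(SM)(v^g)\}_j$. Substituting $d = 2n-1$ and applying the identification of the previous paragraph, I obtain $m(\gamma)\le 4n-2$ and $m'(\gamma)\le 2n-2$. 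A simple tangency $x$ contributes $m(x)=2$ and $m'(x)=1$; hence if every tangency of $\gamma$ is simple, their number equals $m'(\gamma)$ and is bounded by $2n-2$.

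Finally, for the stability statement, I invoke Theorem \ref{th9.1}: the set $\mathcal G^\ddagger(M)$ is open in $\mathcal R(M)$ in the $C^\infty$-topology, so every $g'$ in a sufficiently small $C^\infty$-neighborhood of $g$ is still traversally generic, and all the bounds above persist for geodesics in $(M,g')$ without modification. The main obstacle is the precise extraction of the two numerical bounds from \cite{K2}: the inequality $|\omega|'\le d-1$ is essentially geometric, reflecting that a trajectory is a point in the $(d-1)$-dimensional trajectory moduli, while $|\omega|\le 2d$ is sharper and relies on the full force of the traversally generic condition to control the total number of boundary interactions a single trajectory can sustain.
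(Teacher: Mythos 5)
Your proposal follows the paper's route essentially verbatim: transfer multiplicities from $\g\subset M$ to the lift $\tilde\g\subset SM$ via Lemma \ref{lem9.5}, apply the combinatorial bounds for traversally generic fields on $SM$ (which the paper imports directly as the second bullet of Theorem 3.5 in \cite{K2}, while you rederive them heuristically from the codimension formula $m'(\tilde\g)=\mathrm{codim}\,\mathcal T(v^g,\omega(\tilde\g))\le\dim SM-1$), and obtain stability from the openness of the traversally generic condition. The paper cites the raw ingredients---smooth dependence $g\mapsto v^g$ and openness of $\mathcal V^\ddagger(SM)$---where you cite the derived Theorem \ref{th9.1}, but these are equivalent since Theorem \ref{th9.1} is proved from precisely those two facts.
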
 

\begin{proof} Since, by Lemma \ref{lem9.5}, the multiplicity of tangency of a geodesic curve $\g$ to $\d M$ at a point $x$ and the multiplicity of tangency of its lift $\tilde\g \subset SM$ to $\d(SM)$ at the point $(x, \dot\g(x))$ are equal, the claim follows from the second bullet in Theorem 3.5 from \cite{K2}. 

The last claim follows from two facts: (1) the field $v^g$ depends smoothly on $g$, and (2) the space $\mathcal V^\ddagger(SM)$ is open in the space $\mathcal V(SM)$.
\end{proof}

In one special case of $(M, g)$,  the traversal genericity of the geodesic flow on $SM$ comes ``for free" at the expense of a \emph{very restricted topology} of $M$.  So a random manifold $M$ does not admit a non-trapping metric $g$ such that $\d M$ is convex!

\begin{corollary}\label{cor9.8} Let $M$ be a compact connected Riemannian $n$-manifold with boundary. Assume that the boundary $\d M$ is \emph{strictly convex} with respect to a metric $g$ of the gradient type on $M$. \smallskip

Then geodesic field $v^g$ on $SM$ is traversally  generic. \smallskip

Moreover, the manifold $SM$ is diffeomorphc to the product $D(\d M) \times [0,1]$, the corners in the product being rounded. Here $D(\d M)$ denotes the tangent  $(n-1)$-disk bundle of $\d M$.
\end{corollary}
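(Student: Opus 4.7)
The plan is to prove the two claims separately: that $v^g$ is traversally generic, and that $SM$ admits the claimed product structure. Both reductions hinge on translating the strict convexity of $\d M \subset M$ into the local behavior of $v^g$ along $\d(SM)$ via Lemma \ref{lem9.5}.

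For traversal genericity, strict convexity of $\d M$ means that the second fundamental form is a definite quadratic form at every boundary point, so every geodesic $\g$ that is tangent to $\d M$ at some $x$ is \emph{quadratically} tangent there and, moreover, leaves $M$ immediately (locally lies in $\hat M \setminus M$). Translating to $SM$ via Lemma \ref{lem9.5}, every $v^g$-trajectory that touches $\d(SM)$ tangentially does so at a single point with multiplicity exactly $2$, and the trajectory is a singleton: it cannot re-enter $SM$ because its underlying geodesic has left $M$ for good near the tangent point. Therefore only two combinatorial tangency types from $\Omega^\bullet$ occur — type $(1,1)$, a transverse entry-exit pair, and type $(2)$, an isolated Morse-type quadratic tangency. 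I would then verify Definition 3.2 of \cite{K2} for these two simplest types: for $(1,1)$ trajectories it reduces to transversality, which is automatic from the gradient condition $dF(v^g) > 0$; for $(2)$ singletons it requires the quadratic tangency to be non-degenerate, which is exactly the \emph{strict} inequality in the hypothesis of convexity. This rules out the prohibited types $(3)_\succeq$, $(33)_\succeq$, and $(4)_\succeq$ (compare Remark 3.1).

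For the product structure, the gradient-type hypothesis gives a globally defined scattering map $C_{v^g}: \d_1^+(SM) \to \d_1^-(SM)$, and the discussion preceding Lemma \ref{lem9.4} identifies $\d_1^+(SM)$ canonically with the tangent $(n-1)$-disk bundle $D(\d M)$. I would introduce the exit-time function $T: D(\d M) \to [0,\infty)$, where $T(x,w)$ is the length in the Sasaki metric of the $v^g$-trajectory from $(x,w)$ to $C_{v^g}(x,w)$, with $T|_{\d D(\d M)} \equiv 0$ at the tangent (singleton) trajectories. The flow then defines
\[
\Phi: D(\d M) \times [0,1] \longrightarrow SM,\qquad \Phi\big((x,w), s\big) = \psi^{sT(x,w)}(x,w),
\]
which is continuous, surjective, and bijective away from the side face $\d D(\d M) \times [0,1]$, where it collapses each segment $\{(x,w)\}\times [0,1]$ onto the singleton $(x,w) \in \d_2(SM)$. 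This collapse is precisely the topological effect of rounding the corner of $D(\d M) \times [0,1]$ along $\d D(\d M) \times \{0,1\}$: the rounded space is $SM$ as a topological manifold, its boundary being the double of $D(\d M)$ along $\d D(\d M)=S(\d M)$, which matches $\d(SM) = S(TM|_{\d M})$ by $S(T\d M \oplus \nu) = D(T\d M) \cup_{S(T\d M)} D(T\d M)$.

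The main obstacle is upgrading $\Phi$ to a diffeomorphism at the locus $\d_2(SM)$, where $T$ exhibits a square-root singularity $T \sim \sqrt{\rho}$ (with $\rho$ the distance to $\d D(\d M)$ inside $D(\d M)$) coming from the quadratic tangency. I would handle this by choosing Morse-type normal coordinates around each tangent point, made available by the non-degenerate Hessian that strict convexity provides: in such coordinates the geodesic flow has an explicit quadratic model in which the rounded-corner product structure is manifest. Patching these local normal forms along the tangency stratum, and combining with the obvious flow-box trivialization on the complement of a small neighborhood of $\d_2(SM)$, would yield the global diffeomorphism. Alternatively, one may invoke the canonical flow-box decomposition associated to a traversally generic field (Theorem 6.7 of \cite{K2}, already used in the first part of the proof), which on a field whose only tangency types are $(1,1)$ and $(2)$ literally reconstructs $SM$ as the rounded-corner product $D(\d M) \times [0,1]$.
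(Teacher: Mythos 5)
Your argument for traversal genericity is essentially the paper's: strict convexity forces every tangent geodesic to be a singleton with a quadratic tangency, so the only combinatorial types are $(1,1)$ and $(2)$, and there is no interaction of tangency strata through the bulk to obstruct the transversality conditions of Definition 3.2 of \cite{K2}; the paper phrases this by saying $\d_2 SM(v^g) = \d_2^- SM(v^g)$ makes $v^g$ a ``convex'' traversing field and notes that no strata interact through the bulk.

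For the product structure the routes diverge. The paper identifies $\d_1^+(SM)$ with $D(\d M)$ and then invokes Lemma 4.2 from \cite{K1} as a black box: a convex traversing field on a compact manifold forces the manifold to be the (rounded-corner) product $\d_1^+ X \times [0,1]$. You instead re-derive this fact directly, by building the exit-time map $\Phi((x,w),s) = \psi^{sT(x,w)}(x,w)$ and analyzing the square-root singularity of $T$ at $\d_2(SM)$ via the Morse normal form coming from the non-degenerate second fundamental form. Both strategies work; yours is more self-contained and makes visible exactly where the corner-rounding occurs, but it duplicates what Lemma 4.2 of \cite{K1} already packages. One small correction: your fallback citation of Theorem 6.7 of \cite{K2} for a ``canonical flow-box decomposition'' is misplaced---in that paper, Theorem 6.7 concerns the openness of the set of traversally generic fields and the possibility of $C^\infty$-approximation, not a structure theorem for convex traversing flows; the relevant structural result is precisely Lemma 4.2 of \cite{K1}, which is what the paper cites.
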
 

\begin{proof} For such metric $g$,  any geodesic curve $\g \subset M$, tangent to $\d M$, is a singleton. Therefore $\d_2SM(v^g) =  \d_2^-SM(v^g)$ --- the field $v^g$ on $SM$ is convex. Under these assumptions, the geodesic field $v^g$ on $SM$ is traversally  generic, since no strata $\{\d_jSM(v^g)\}_j$ interact, with the help of the geodesic flow, trough the bulk $SM$. Also, for a strictly convex $\d M$ and $g$ of the gradient type,  
$$\d_1^+SM(v^g) = \{(x, v) |\;  x \in \d M, \, v\; \text{points inside}\; M\}.$$ Thus $\d_1^+SM(v^g)$ fibers over $\d M$ with a fiber being the hemisphere $D^{n-1}_+ \subset S^{n-1}$ of dimension $n - 1 = \dim(\d M)$. This fibration is isomorphic to the unit disk tangent bundle of $\d M$. Hence, by Lemma 4.2 from \cite{K1},  the manifold $SM$ must be diffeomorphc to the product $D(\d M) \times [0,1]$, the corners in the products being rounded.  As a result, $SM$ fibers over $\d M$ with the fiber $D^n$ and over $M$ with the fiber $S^{n-1}$. This property of $SM$ puts severe restrictions on the topology  of $M$: in particular, the space $SM$ must be homotopy equivalent to $\d M$. 

Note that $M = D^n$ has the desired property: $M \times S^{n-1} \approx \d M \times D^n$.
\end{proof}

The next corollary should be compared with Theorem D from \cite{Cr2}, in a way, a stronger than Corollary \ref{cor9.9} result, but using more data (the distance ``across $M$" between points of $\d M$).   
 
\begin{corollary}\label{cor9.9} Let $M \subset \H^n$ be a codimension $0$ compact smooth submanifold of the hyperbolic space, such that the metric $g = (g_{\H})|_M$ is geodesically boundary generic.

Then the metric $g$ is of the gradient type, and the scattering map $C_{v^{g}}: \d_1^+(SM) \to \d_1^-(SM)$  allows for a reconstruction of the pair $(SM, \mathcal F(v^{g}))$, up to a homeomorphism (a diffeomorphism when the property $\mathsf A$ is valid) of $SM$ which is the identity on $\d(SM)$. 
\end{corollary}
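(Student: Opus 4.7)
The plan is a direct assembly of two already-established ingredients: Example 2.3 supplies a Lyapunov function for the restricted hyperbolic geodesic flow, and Corollary \ref{cor9.4} converts the hypothesis ``boundary generic metric of the gradient type'' into exactly the reconstruction statement being claimed. So the only non-formal step is verifying $g \in \mathcal G(M)$.

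To establish the gradient-type property, I would follow Example 2.3 verbatim. Model $\mathsf H^n$ as the open unit Euclidean ball, so every complete geodesic $\bar\g \subset \mathsf H^n$ hits the virtual boundary $\d\mathsf H^n$ at two distinct points, orthogonally in the Euclidean sense. For each $(x,w) \in S\mathsf H^n$ let $y_-(x,w) \in \d\mathsf H^n$ be the past endpoint of the oriented geodesic through $x$ in the direction of $w$, and put
\[
F(x,w) \;=\; d_{\mathsf H}\bigl(x,\, y_-(x,w)\bigr),
\]
the hyperbolic length of the circular arc from $y_-(x,w)$ to $x$. Smoothness of $F$ on $S\mathsf H^n$ follows from smoothness of the hyperbolic exponential map and of the defining equation of $\d\mathsf H^n$, and $F$ is strictly increasing along every oriented hyperbolic geodesic. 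Since $M \subset \mathsf H^n$ is codimension zero, the geodesic field $v^{g}$ on $SM$ is simply the restriction of the ambient field $v^{g_{\mathsf H}}$ on $S\mathsf H^n$, so $d(F|_{SM})(v^{g}) > 0$ on all of $SM$. Hence $F|_{SM}$ is a smooth Lyapunov function for $v^{g}$, and $g \in \mathcal G(M)$.

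Combining this with the standing hypothesis that $g$ is geodesically boundary generic gives $g \in \mathcal G^\dagger(M)$. At this point Corollary \ref{cor9.4} applies verbatim and yields the desired reconstruction of $(SM, \mathcal F(v^{g}))$ up to a homeomorphism of $SM$ which is the identity on $\d(SM)$ and an orientation-preserving diffeomorphism on each $v^{g}$-trajectory; the smooth upgrade under property $\mathsf A$ is likewise contained in the statement of that corollary.

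I do not expect any substantive obstacle. The only point that warrants a sentence is the identification of $v^{g}$ with the restriction of $v^{g_{\mathsf H}}$, which is immediate from $M$ being a codimension-zero submanifold (so that a $\mathsf H^n$-geodesic emanating from $(x,w) \in TM$ coincides with the $g$-geodesic in the same direction so long as it remains inside $M$). Beyond this, the argument is a two-line citation.
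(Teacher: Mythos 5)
Your overall route is the same as the paper's: invoke Example~2.3 to certify that the hyperbolic metric restricted to $M$ is of the gradient type, and then feed $g\in\mathcal G^\dagger(M)$ into Corollary~\ref{cor9.4} (whose content is exactly the reconstruction claim, including the upgrade to a diffeomorphism under property $\mathsf A$). So the logic is sound and matches the source; you could even have shortened the gradient-type step by citing Corollary~\ref{cor9.1} directly, which already states verbatim that the restriction of $g_{\mathsf H}$ to a compact codimension-zero $M\subset\mathsf H^n$ is of the gradient type.

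There is, however, one slip that would sink the argument if taken at face value: you define $F(x,w)=d_{\mathsf H}\bigl(x, y_-(x,w)\bigr)$ and describe it as ``the hyperbolic length of the circular arc from $y_-(x,w)$ to $x$.'' The hyperbolic length of a geodesic arc reaching the ideal boundary $\d\mathsf H^n$ is infinite, so this $F$ would be identically $+\infty$ and not a function at all. Example~2.3 is careful here: $d_\H(x,w)$ is defined as ``the length of the circular arc $(y,x)\subset\bar\g$ in $\mathsf E^n$,'' i.e.\ the \emph{Euclidean} arc length in the Poincar\'e ball model. That quantity is finite, smooth on $S\mathsf H^n$, and strictly increasing along oriented geodesics, which is what makes it a Lyapunov function. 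Since you announced you were following Example~2.3 verbatim, this reads as a misquotation rather than a genuine gap, but it is the one phrase you need to correct before the proof is literally true.
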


\begin{proof} In view of Corollary \ref{cor9.4} and Example 2.3, %\ref{ex9.2}, 
the hyperbolic metric on $M$ is of the gradient type. By Theorem \ref{th9.2}, the corollary follows.
 \end{proof}
\smallskip

One might hope that, for a ``random" (bumpy) metric $g \in \mathcal G^\dagger(M)$ on $M$, the reconstruction $C_{v^g} \Rightarrow (M, g)$  is possible, up to the action of diffeomorphisms that are the identity on $\d M$ (see \cite{Cr}, \cite{Cr1}, \cite{We}, and especially \cite{SU4} and \cite{SUV3} for special interesting cases of such reconstructions). At the moment, this is just a wishful thinking, weakly supported by \cite{Mat}. Here is the main difficulty in reaching this conclusion, as we see it now from the holographic viewpoint. Although, by Theorem \ref{th9.2}, $C_{v^g}$ allows for a reconstruction of the pair $(SM, \mathcal F(v^g))$, the fibration map $SM \to M$ seems to resist a reconstruction. In other words, the scattering map $C_{v^g}$ ``does not know how to project the geodesic flow trajectories in $SM$ to the geodesic curves in $M$".  Perhaps, some additional structure should be brought into the play.
 \smallskip

So, in the absence of a faithful reconstruction of the geometry $g$ on $M$ from the scattering data $C_{v^g}$,  with a mindset of a ``humble topologist", we will settle for less:
 
\begin{theorem}\label{th9.4} Assume that a smooth compact connected $n$-manifold $M$ with boundary admits a boundary generic Riemannian metric $g$ of the gradient type.  Then the following statements are valid:
\begin{itemize}
\item The geodesic scattering map $C_{v^g}: \d_1^+(SM) \to \d_1^-(SM)$ allows for a reconstruction of the cohomology rings $H^\ast(M; \Z)$ and $H^\ast(M, \d M; \Z)$, as well as for the reconstruction of the homotopy groups $\{\pi_i(M)\}_{i < n}$. \smallskip

\item Moreover, the Gromov simplicial semi-norms $\|\sim\|_{\mathbf \D}$ on the vector spaces $H^\ast(M; \R)$ and on $H^\ast(M, \d M; \R)$ can be reconstructed form $C_{v^g}$. In particular,  the simplicial volume $\|[M, \d M] \|_{\mathbf \D}$ of the fundamental  cycle $[M, \d M]$ can be recovered form $C_{v^g}$. \smallskip

\item If, in addition, $M$ has a trivial tangent bundle, then the stable topological (smooth when the property $\mathsf A$ from Definition \ref{def9.4a} holds) type of $M$\footnote{Here we say that $M$ and $M'$ share the same stable topological (smooth) type, if $M \times S^{n-1}$ and $M' \times S^{n-1}$ are homeomorphic (diffeomorphic).} is also reconstructable from the scattering map. 
\end{itemize}
\end{theorem}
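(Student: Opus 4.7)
The plan is to invoke Theorem~\ref{th9.2}, which from $C_{v^g}$ recovers $(SM,\mathcal F(v^g))$ up to homeomorphism (diffeomorphism under property $\mathsf A$) together with the boundary stratification $\{\partial_j^\pm SM(v^g)\}$, and then to extract invariants of $M$ from those of $SM$ by exploiting the sphere bundle structure $\pi:SM\to M$. First I would recover $\partial M$ and a concrete fiber $S^{n-1}\subset SM$: the locus $\partial_1^+(SM)$ is the inward unit tangent $(n-1)$-disk bundle over $\partial M$, whose zero-section deformation retract returns $\partial M$, and $\partial(SM)\to\partial M$ is identified with the restriction of $\pi$, furnishing fibers $S_m M\subset\partial(SM)\subset SM$ for every $m\in\partial M$.

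For the homotopy groups, apply the long exact sequence of the fibration $S^{n-1}\to SM\to M$. Using $n\ge 3$, the groups $\pi_i(S^{n-1})$ and $\pi_{i-1}(S^{n-1})$ vanish for $i\le n-2$, giving $\pi_i(M)\cong\pi_i(SM)$; for $i=n-1$, since $\pi_{n-2}(S^{n-1})=0$, the group $\pi_{n-1}(M)$ is the cokernel of $\pi_{n-1}(S^{n-1})\to\pi_{n-1}(SM)$, a map determined by the recovered fiber inclusion $S_m M\hookrightarrow SM$. For cohomology, apply the Gysin sequence of $\pi$. The crucial observation is that the Euler class $e=e(TM)\in H^n(M;\Z)$ vanishes: since $M$ is a compact connected $n$-manifold with non-empty boundary, Poincar\'e--Lefschetz duality gives $H^n(M;\Z)\cong H_0(M,\partial M;\Z)=0$. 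Hence the Gysin sequence breaks into short exact sequences
$$0\to H^i(M;\Z)\xrightarrow{\pi^*}H^i(SM;\Z)\to H^{i-n+1}(M;\Z)\to 0,$$
from which $H^*(M;\Z)$ is determined inductively in $i$ from the known $H^*(SM;\Z)$; the cup-product structure transfers through the injective $\pi^*$, and $H^*(M,\partial M;\Z)$ follows by Poincar\'e--Lefschetz duality once $\partial M$ is in hand.

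For the Gromov simplicial semi-norms, I would use that $\|\cdot\|_{\mathbf{\D}}$ is a homotopy invariant of the pair $(M,\partial M)$ controlled, via Gromov's mapping theorem \cite{G}, by the classifying map $M\to B\pi_1(M)$. Both $\pi_1(M)$ (recovered above) and its effect on homology (determined by the recovered $H_*(M)$ together with the universal property of $B\pi_1(M)$) are accessible, so $\|[M,\partial M]\|_{\mathbf{\D}}$ follows. The third bullet is immediate: if $TM$ is trivial then any trivialization gives $SM\cong M\times S^{n-1}$, and Theorem~\ref{th9.2} recovers this product up to homeomorphism (or diffeomorphism under $\mathsf A$), which by definition is the stable topological (resp.\ smooth) type of $M$.

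The main obstacle throughout is that the bundle projection $\pi:SM\to M$ is not directly encoded in $C_{v^g}$: two points of $SM$ share a $\pi$-fiber iff their geodesics issue from the same point of $M$, a relation that would require arclength (lens) data to detect. The argument therefore sidesteps reconstructing $M$ as a space, working instead with algebraic invariants of $SM$ and the recovered boundary fiber $S^{n-1}$; the projection $\pi$ enters only abstractly through the Gysin sequence and the fibration LES, and extensions of integral cohomology in the Gysin short exact sequences must be resolved using the recovered ring structure on $H^*(SM)$ and the injectivity of $\pi^*$.
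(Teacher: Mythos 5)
Your proposal follows the same skeleton as the paper's proof --- recover $SM$ (and its boundary stratification) via Theorem~\ref{th9.2}, then exploit the sphere bundle $S^{n-1}\to SM\to M$ together with the vanishing of $H^n(M;\Z)$ (hence of $e(TM)$) --- but the two arguments diverge at two places, one where your route is arguably more careful and one where it opens a genuine gap. On the careful side: the paper flatly asserts that the homotopy long exact sequence ``identifies $\pi_i(M)$ with $\pi_i(SM)$ for all $i<n$,'' which is only literally true for $i\leq n-2$; your cokernel description at $i=n-1$ is the honest statement. On the cohomology side, your use of the Gysin sequence (with $e=0$ killing the connecting map) is equivalent to the paper's observation that the Leray spectral sequence of the sphere bundle collapses; the paper additionally records that $e=0$ gives a \emph{section} $\sigma:M\to SM$, hence a retraction, which is the engine of the rest of the argument and which you never invoke explicitly.

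The gap is in the step where you claim to recover ``a concrete fiber $S_mM\subset SM$'' from the scattering data by identifying $\partial_1^+(SM)$ with the inward unit-disk bundle over $\partial M$. Theorem~\ref{th9.2} only hands you $SM$ and the stratification up to homeomorphism; it does \emph{not} hand you the projection $\partial(SM)\to\partial M$, so neither the fibers of that projection nor the deformation retraction of $\partial_1^+(SM)$ onto a copy of $\partial M$ are canonically available. (The paper itself emphasizes, just before the theorem, that ``the fibration map $SM\to M$ seems to resist a reconstruction.'') Consequently both your identification of $\partial M$ and your description of $\pi_{n-1}(M)$ as the cokernel of the fiber inclusion lean on data you have not shown to be recoverable. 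The intended resolution is what the paper does implicitly: one is only claiming that the listed invariants are \emph{determined} by the homeomorphism type of $SM$ together with the abstract fact that $SM$ fibers over an $n$-manifold with nonempty boundary by spheres; in the degrees asserted the ambiguity in the choice of bundle structure is immaterial (e.g.\ $H^k(M)\cong H^k(SM)$ for $k\leq n-2$ and $H^{n-1}(M)\cong H^{2n-2}(SM)$, with no extension issues), so one never actually needs to exhibit a fiber.

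For the Gromov semi-norm your route (Gromov's mapping theorem via $M\to B\pi_1(M)$) is genuinely different from the paper's. The paper's argument is shorter and more robust: because a section exists, $M$ is a retract of $SM$, so $\pi^\ast$ and $\sigma^\ast$ are both norm-nonincreasing and compose to the identity, making $\pi^\ast$ an isometry onto its image in $H^\ast(SM;\R)$ (and likewise on relative cohomology, since $\sigma$ is a map of pairs). Your mapping-theorem argument would additionally need to recover the classifying map on relative homology, and the mapping theorem for pairs $(M,\partial M)$ requires amenability hypotheses on the boundary components to apply cleanly. If you keep your route you should at least flag these extra inputs; otherwise the retraction argument is the cleaner path to the second bullet.
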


\begin{proof} Since, by Theorem \ref{th9.2}, the topological type of $SM$ can be reconstructed from the scattering map $C_{v^g}$, so is the cohomology/homology of $SM$ with arbitrary coefficients. Moreover, the Gromov simplicial semi-norms $\|\sim\|_{\mathbf \D}$ on $H^\ast(SM; \R)$ and on $H_\ast(SM; \R)$, being invariants of the homotopy type of the pair $(SM, \d(SM))$, can be recovered form $C_{v^g}$. 

Since $H^n(M; \Z) = 0$ due to the property $\d M \neq \emptyset$, the fibration $\pi: SM \to M$ admits a section $\s$. Thus $M$ is a retract of $SM$. 

The cohomology/homology spectral sequence of the spherical fiibration $\pi: SM \to M$ is trivial, again since $H^n(M; \Z) = 0$. As a result,  $$H^\ast(SM; \Z) \approx H^\ast(M; \Z) \oplus H^{\ast - n +1}(M; \Z).$$ Note that $H^\ast(M; \Z) \approx H^\ast(SM; \Z)$ for all $\ast < n-1$. Therefore $H^\ast(M; \Z)$, a direct summand of $H^\ast(SM; \Z)$, can be recovered from $H^\ast(SM; \Z)$ and thus from the scattering map $C_{v^g}$. 

Similar considerations are valid for $H^\ast(M, \d M; \Z)$, a direct summand of the homology $H^\ast(SM, \d(SM); \Z)$.

The simplicial semi-norm of a homology class does not increase under continuous maps of spaces \cite{G}. Therefore, with the help of $\s^\ast$, we get that the natural homomorphisms $$\pi^\ast: H^\ast(M; \R) \to H^\ast(SM; \R), \;\; \pi^\ast: H^\ast(M, \d M; \R) \to H^\ast(SM, \d(SM); \R),$$ induced by the map $\pi: SM \to M$, are \emph{isometries} with respect to the semi-norm  $\|\sim\|_{\mathbf \D}$. Hence, the Gromov simplicial semi-norms on $H^\ast(M, \d M; \Z)$ and $H^\ast(M; \Z)$ can be recovered form $C_{v^g}$ as well. In particular, the simplicial volume $\|[M, \d M] \|_{\mathbf \D}$ of the fundamental class $[M, \d M]$ can be recovered form the scattering data $C_{v^g}$.
\smallskip

The long exact homotopy sequence  of the fibration $SM \to M$ identifies $\pi_i(M)$ with $\pi_i(SM)$ for all $i < n$. As a result, the homotopy groups $\{\pi_i(M)\}_{i < n}$ can be recovered from the scattering map $C_{v^g}$ as well.

For a trivial tangent bundle $TM$, the ``reconstructible" space $SM$ is diffeomorphic to the product $M \times S^{n-1}$. Thus the stable smooth type of $M$ (the stabilization being understood as the multiplication with a sphere) is ``reconstructible" as well. In particular, the homotopy type of $M$ can be reconstructed from the scattering data.
\end{proof}

By Theorem \ref{th9.2}, if two scattering maps are smoothly conjugated, then the two metrics are geodesic flow topologically conjugated, provided they are boundary generic and of the gradient type. In general, we do not know when two metrics, which are geodesic flow topologically/strongly conjugate in the sense of Definition \ref{def9.4a}, are isometric/proportional (see \cite{Cr}, \cite{Cr1}, \cite{We} for the positive answers in special cases; for example, Croke proved that, for $n \geq 2$, the flat product metric on $D^n \times S^1$ is scattering rigid). However, for one family of special cases, dealing with metrics of negative sectional curvature, we get a pleasing answer in Theorem \ref{th9.6}. This theorem should be compared with more general results of a similar nature in \cite{SUV1}, \cite{SUV2}, \cite{SUV3}, obtained by powerful analytic techniques. \smallskip

In order to illustrate a connection between the geodesic flow topological conjugacy and isometry of Riemannian manifolds, let us  recall the famous Mostow Rigidity Theorem \cite{Most}.  Let $X$ and $Y$ be complete finite-volume hyperbolic $n$-manifolds with $n \geq 3$. If there exists an isomorphism  $\phi_\ast: \pi_1(X) \to \pi_1(Y)$ of the fundamental groups, then the Mostow Theorem claims that $\phi_\ast$ is induced by a unique isometry $\phi: X \to Y$. 

Here is  another formulation of the Mostow Rigidity Theorem, better suited for our goals:

\begin{theorem}\label{th9.5} {\bf (Mostow)} Let $(N_1, g_1)$ and $(N_2, g_2)$ be two closed locally symmetric Riemannian manifolds  with negative sectional curvatures and of dimension $\geq 3$.  

Given an isomorphism $\phi: \pi_1(N_1) \to \pi_1(N_2)$ of the fundamental groups, there exists a unique isometry $\Phi: (N_1, c\cdot g_1) \to (N_2, g_2)$, where $c > 0$ is a constant, so that the map $\Phi$ induces the isomorphism $\phi$ of the fundamental groups. \hfill $\diamondsuit$
\end{theorem}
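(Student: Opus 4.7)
The plan is to reduce the statement to the classical Mostow--Prasad rigidity machinery and to deal with the scaling constant $c$ that appears because we are not fixing a curvature normalization across $(N_1,g_1)$ and $(N_2,g_2)$.

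First I would lift everything to the universal covers. Since each $(N_i,g_i)$ is closed, locally symmetric, and of negative sectional curvature, its universal cover $(\tilde N_i,\tilde g_i)$ is a rank one non-compact symmetric space of the non-compact type: one of $\mathsf H^n_{\R}$, $\mathsf H^n_{\C}$, $\mathsf H^n_{\H}$, or $\mathsf H^2_{\mathbb O}$. Up to scaling the metric by a positive constant, both $\tilde g_1$ and $\tilde g_2$ become isometric to the \emph{same} symmetric space model $X$; let $c>0$ be the scaling constant relating them. So, after replacing $g_1$ by $c\cdot g_1$, I may assume $\tilde N_1$ and $\tilde N_2$ are both isometric to $X$, and the task is to produce an isometry $\Phi : N_1 \to N_2$ realizing $\phi : \pi_1(N_1)\to\pi_1(N_2)$ on $\pi_1$.

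Next I would promote $\phi$ to a continuous equivariant map. Pick any smooth map $f : N_1 \to N_2$ inducing $\phi$ (standard, since the $N_i$ are $K(\pi_i,1)$ by negative curvature). Its $\phi$-equivariant lift $\tilde f : X \to X$ is then a quasi-isometry of $X$, because it is $\pi_1$-equivariant between two co-compact actions on $X$ that are conjugated via $\phi$. The Morse lemma in the Gromov-hyperbolic space $X$ ensures that $\tilde f$ has a well-defined boundary extension $\tilde f_\infty : \partial_\infty X \to \partial_\infty X$, which is $\phi$-equivariant with respect to the two isometric actions of $\pi_1(N_1)$ on $\partial_\infty X$.

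The heart of the argument is to show that this boundary map is actually a conformal (M\"obius) automorphism of $\partial_\infty X$ with its Carnot--Carath\'eodory quasiconformal structure. One shows $\tilde f_\infty$ is quasiconformal (resp. quasisymmetric in the Heisenberg-type metric on $\partial_\infty X$), and then invokes regularity: Mostow's original ergodicity-of-the-geodesic-flow argument in the real hyperbolic case, Pansu's differentiation theorem in the complex, quaternionic, and octonionic cases, or equivalently Tukia's theorem on quasiconformal groups. Either route forces $\tilde f_\infty$ to be the boundary action of an element of $\mathrm{Isom}(X)$, because any quasiconformal map conjugating two co-compact lattice actions on $\partial_\infty X$ must coincide with a M\"obius transformation. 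I expect this regularity step to be the main obstacle, and I would import it as a black box rather than redoing it.

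Finally, a M\"obius boundary automorphism of $\partial_\infty X$ extends uniquely to an isometry $\tilde\Phi : X \to X$, and the equivariance forces $\tilde\Phi$ to descend to a diffeomorphism $\Phi : N_1 \to N_2$ that is an isometry from $(N_1,c\cdot g_1)$ to $(N_2,g_2)$ and realizes $\phi$ on fundamental groups. Uniqueness of $\Phi$ follows because any two such isometries would differ by an isometry of $(N_2,g_2)$ that acts trivially on $\pi_1(N_2)$; in negative curvature such an isometry is the identity, as it would have to fix every boundary point at infinity (every axis of every deck transformation), hence the whole of $X$.
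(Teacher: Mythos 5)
The paper does not prove Theorem~\ref{th9.5}. It is quoted from Mostow's monograph~\cite{Most}, and the terminating ``$\diamondsuit$'' is the paper's convention for statements given without proof; the theorem enters only as a black box in the proof of Theorem~\ref{th9.6}. So there is no internal proof to compare against, and the relevant question is whether your sketch is a sound outline of the classical argument.

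It essentially is: quasi-isometry of universal covers via a $\phi$-equivariant map, boundary extension by the Morse lemma in a Gromov-hyperbolic space, quasiconformal regularity of the boundary map (Mostow's ergodicity argument in the real hyperbolic case, Pansu differentiation or Tukia's theorem in the other rank-one cases), extension of the resulting M\"obius map to an interior isometry, descent to $N_1\to N_2$, and uniqueness via triviality of the centralizer of a cocompact lattice. One point to tighten: you begin by asserting that, after rescaling, $\tilde N_1$ and $\tilde N_2$ are isometric to the \emph{same} rank-one symmetric space $X$. That coincidence of type (real vs.\ complex vs.\ quaternionic vs.\ Cayley) is not an a priori consequence of $\pi_1(N_1)\cong\pi_1(N_2)$; it is itself one of the outputs of the rigidity argument. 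The honest version of the proof starts from a $\phi$-equivariant quasi-isometry between two a priori possibly distinct rank-one symmetric spaces $X_1$ and $X_2$, and the identification of types (hence the existence of the constant $c$) falls out of the boundary analysis. As written, your first reduction quietly assumes part of what is to be proved; the rest of the sketch is the standard and correct route.
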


In the  spirit of Theorem 1.3 from \cite{BCG} and \cite{CEK}, by combining Mostow Theorem \ref{th9.5} with Theorem \ref{th9.2}, we get the following result. It and its Corollary \ref{cor9.10} should be compared with Theorem D from \cite{Cr2}, which deals with domains in the hyperbolic space $\H^n$.

\begin{theorem}\label{th9.6} Let $n \geq 3$.  Consider two closed locally symmetric Riemannian $n$-manifolds, $(N_1, g_1)$ and $(N_2, g_2)$, with negative sectional curvatures. Let a connected manifold $M_i$ ($i = 1, 2$) be obtained from $N_i$ by removing the interior of a smooth codimension zero submanifold $U_i \subset N_i$, such that the induced homomorphism $\pi_1(M_i) \to \pi_1(N_i)$ of the fundamental groups is an isomorphism\footnote{For example, by a general position argument, this the case when $U_i$ has a spine of codimension $3$ at least. In particular, $U_i$ may be a disjoint union of $n$-balls.}.

Assume that the restriction of the metric $g_i$ to $M_i$ is boundary generic and of the gradient type\footnote{Thanks to Theorem \ref{th9.0}, the latter hypotheses is not restrictive; by Conjecture \ref{conj9.2}, the boundary generic hypotheses does not seem restrictive either.}. Assume also that the two geodesic scattering maps $$C_{v^{g_1}}: \d_1^+(SM_1) \to \d_1^-(SM_1), \quad C_{v^{g_2}}: \d_1^+(SM_2) \to \d_1^-(SM_2)$$ are conjugated via a smooth diffeomorphism $\Phi^\d: \d (SM_1) \to \d (SM_2)$\footnote{Thus the boundaries $\d U_1$ and $\d U_2$ are stably diffeomorphic.}.\smallskip

Then $\Phi^\d$ determines a unique diffeomorphism $\phi: N_1 \to N_2$ such that $\phi^\ast(g_2) = c \cdot g_1$ for a constant $c > 0$.
\smallskip
\end{theorem}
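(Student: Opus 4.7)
The strategy is to extract from $\Phi^\d$ an isomorphism $\phi_\ast : \pi_1(N_1) \to \pi_1(N_2)$, and then feed it into Mostow Rigidity (Theorem \ref{th9.5}) to produce the required $\phi$. All the heavy lifting is already done by prior results in the paper; the role of the present theorem is essentially to splice them together.

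First I would apply Theorem \ref{th9.2} to the data $(M_1, g_1)$, $(M_2, g_2)$, $\Phi^\d$: since $g_2|_{M_2}$ is of the gradient type and both restricted metrics are boundary generic, the theorem guarantees that $g_1|_{M_1}$ is also of the gradient type, and produces a homeomorphism $\Phi : SM_1 \to SM_2$ extending $\Phi^\d$ and conjugating the two geodesic foliations leafwise. In particular $\Phi$ induces an isomorphism $\Phi_\ast : \pi_1(SM_1) \xrightarrow{\cong} \pi_1(SM_2)$. Next, since $n \geq 3$, the fibre $S^{n-1}$ of the spherical bundle $p_i : SM_i \to M_i$ is simply connected, so the long exact homotopy sequence supplies canonical isomorphisms $(p_i)_\ast : \pi_1(SM_i) \xrightarrow{\cong} \pi_1(M_i)$. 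Composing with the hypothesised isomorphisms $\iota_i : \pi_1(M_i) \xrightarrow{\cong} \pi_1(N_i)$ induced by the inclusions $M_i \hookrightarrow N_i$, I obtain
\[
\phi_\ast \;=\; \iota_2 \circ (p_2)_\ast \circ \Phi_\ast \circ (p_1)_\ast^{-1} \circ \iota_1^{-1} \;:\; \pi_1(N_1) \xrightarrow{\cong} \pi_1(N_2).
\]
Theorem \ref{th9.5} (Mostow Rigidity), applicable because $(N_i, g_i)$ are closed, locally symmetric, of negative sectional curvature, and of dimension $n \geq 3$, then delivers a unique isometry $\phi : (N_1, c\,g_1) \to (N_2, g_2)$, for some constant $c > 0$, that induces $\phi_\ast$ on fundamental groups. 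This $\phi$ is the sought diffeomorphism, and its uniqueness in the conclusion is exactly the uniqueness statement built into Mostow's theorem.

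The main obstacle I anticipate lies not in any individual step but in the clause ``$\Phi^\d$ \emph{determines} $\phi$'' -- one must check that the construction above does not secretly depend on choices made along the way. Specifically, the extension $\Phi$ provided by Theorem \ref{th9.2} is a priori not literally unique, so I would need to verify that any two such extensions are homotopic rel $\d(SM_1)$ and hence induce the same $\Phi_\ast$ on $\pi_1$. The reconstruction in the Holography Theorem of \cite{K4}, which underlies Theorem \ref{th9.2}, is canonical enough to make $\Phi_\ast$ depend only on $\Phi^\d$, but this is the point in the argument that merits the most care. Once $\Phi_\ast$, and therefore $\phi_\ast$, is canonically pinned down by $\Phi^\d$, the diffeomorphism $\phi$ is automatically unique by Mostow's theorem, and the constant $c > 0$ accounts for the standard normalisation ambiguity of negatively curved locally symmetric metrics.
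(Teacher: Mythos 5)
Your approach matches the paper's almost line for line: apply Theorem~\ref{th9.2} to get a conjugating homeomorphism $\Phi: SM_1 \to SM_2$ extending $\Phi^\d$, pass to $\pi_1$ through the sphere bundles (here you should note that $M_i$, having nonempty boundary, admits a section $\sigma_i: M_i \to SM_i$, which is what actually realizes the isomorphism $\pi_1(SM_i)\cong\pi_1(M_i)$ geometrically), transport along the hypothesized isomorphisms $\pi_1(M_i) \cong \pi_1(N_i)$, and invoke Mostow rigidity.

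You correctly flagged the one place needing care — that $\Phi_\ast$ must be shown to depend only on $\Phi^\d$ and not on the particular extension $\Phi$ — but you only asserted that the Holography Theorem's construction is ``canonical enough,'' and sketched a strategy (show any two extensions are homotopic rel~$\d(SM_1)$) that the paper does not pursue. The paper's actual mechanism is cleaner: it factors $\Phi_\ast$ through the trajectory spaces. Because $v^{g_i}$ is traversing, the collapse map $\Gamma_i: SM_i \to \mathcal T(v^{g_i})$ is a quasifibration with contractible fibers, hence $(\Gamma_i)_\ast:\pi_1(SM_i)\to\pi_1(\mathcal T(v^{g_i}))$ is an isomorphism. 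The conjugating homeomorphism satisfies $\Phi^{\mathcal T}\circ\Gamma_1 = \Gamma_2\circ\Phi$, and crucially the induced map $\Phi^{\mathcal T}: \mathcal T(v^{g_1}) \to \mathcal T(v^{g_2})$ is already determined by the boundary data alone, via $\Phi^{\mathcal T}\circ\Gamma_1^\d = \Gamma_2^\d\circ\Phi^\d$ (both quotients $\mathcal T(v^{g_i}) = \d(SM_i)/C_{v^{g_i}}$ are computed purely from boundary data). Thus $\Phi_\ast$ is pinned down by $\Phi^\d$ without any need to compare different bulk extensions. You would do well to replace your homotopy-rel-boundary argument with this trajectory-space factorization, which is both what the paper does and genuinely shorter.
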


\begin{proof} By Theorem \ref{th9.2} (see also Theorem \ref{th9.4}), the spaces $SM_1$ and $SM_2$ are homeomorphic via a homeomorphism $\Phi$ which is an extension of $\Phi^\d$. In particular, $\Phi$ induces an  isomorphism $\Phi_\ast: \pi_1(SM_1) \to \pi_1(SM_2)$ of the fundamental groups. Each space $SM_i$ fibers over $M_i$ with the spherical fiber $S^{n-1}$. Thus, when $n \geq 3$, the fundamental groups $\pi_1(M_1)$ and  $\pi_1(M_2)$ are isomorphic via an isomorphism $\phi_\ast: \pi_1(M_1) \to \pi_1(M_2)$, which is determined by $\Phi_\ast$. In fact, $\phi_\ast$ is induced by taking a section $\s_1: M_1 \to SM_1$ (the bundle $TM_1 \to M_1$ admits a non-vanishing section since $\d M_1 \neq \emptyset$), composing $\s_1$ with $\Phi$, and then applying the projection $SM_2 \to M_2$. 

By the hypotheses, the inclusion homomorphism $\pi_1(M_i) \to \pi_1(N_i)$ is an isomorphism. Therefore, $\pi_1(N_1)$ and $\pi_1(N_2)$ are isomorphic via an isomorphism $\hat\phi_\ast$, which is determined by $\phi_\ast$ (and thus eventually by $\Phi$).   By Mostow's Rigidity Theorem  \ref{th9.5}, for a given $\hat\phi_\ast$, there exists a unique diffeomorphism $\hat\phi: N_1 \to N_2$ such that $\hat\phi^\ast(g_2) = c \cdot g_1$, where $c$ is a positive constant.

We claim that the diffeomorphism $\hat\phi: N_1 \to N_2$ is actually determined by the diffeomorphism $\Phi^\d: \d(SM_1) \to \d(SM_2)$. Indeed, consider the obvious map $\Gamma_i: SM_i \to \mathcal T(v^{g_i})$, where $\mathcal T(v^{g_i})$ denotes the trajectory space of the geodesic $v^{g_i}$-flow. Since $v^{g_i}$ is traversing, $\Gamma_i$ is a quasifibration with contractible fibers. Therefore, $(\Gamma_i)_\ast: \pi_1(SM_i) \to \mathcal \pi_1(\mathcal T(v^{g_i}))$ is an isomorphism. By Theorem \ref{th9.2}, we get a commutative diagram $\Phi^\mathcal T \circ \Gamma_1 = \Gamma_2 \circ \Phi$, where $\Phi^\mathcal T: \mathcal T(v^{g_1}) \to \mathcal T(v^{g_2})$ is a homeomorphism, which is determined by $\Phi^\d$. In fact, $\Phi^\mathcal T \circ \Gamma_1^\d = \Gamma_2^\d \circ \Phi^\d$, where $\Gamma_i^\d = \Gamma_i |_{\d(SM_i)}$. Therefore $\Phi_\ast: \pi_1(SM_1) \to \pi_1(SM_2)$ is determined by $\Phi^\d$. As a result, $\phi_\ast: \pi_1(M_1) \to \pi_1(M_2)$ and thus $\hat\phi_\ast: \pi_1(N_1) \to \pi_1(N_2)$ are determined by $\Phi^\d$. So by Mostow's Rigidity Theorem  \ref{th9.5}, the corresponding unique diffeomorphism $\hat\phi: N_1 \to N_2$ that delivers the isometry between $g_2$ and $c\cdot g_1$ is determined by  $\Phi^\d$.
\end{proof}

\noindent {\bf Remark 3.4.} Under the hypotheses of Theorem \ref{th9.6}, consider the case when the diffeomorphism $\Phi^\d$ that conjugates the scattering maps is the identity. Let $U_1 = U_2$ be a $n$-ball, and $M_1 = M_2 =_{\mathsf{def}} M$. Then $\d(SM) \approx S^{n-1} \times S^{n-1}$ and  $C_{v^g}$ is a map whose source and target are both canonically diffeomorphic to $S^{n-1} \times D^{n-1}$ (with the help of the involution that orthogonally reflects tangent vectors with respect to the boundary $\d M$). By Theorem \ref{th9.6}, the scattering map $C_{v^g}: S^{n-1} \times D^{n-1} \to S^{n-1} \times D^{n-1}$ allows to reconstruct the pair $(N, g)$, up to a positive scalar.  In other words, the scattering maps distinguishes (up to a constant conformal factor) between different locally symmetric spaces of negative sectional curvature.
\hfill $\diamondsuit$
\smallskip

The next immediate corollary of Theorem \ref{th9.6} is inspired by the image of geodesic motion of a bouncing particle in the complement $M$ to a number of disjoint balls, placed in a closed hyperbolic manifold $N$ of a dimension greater than two. The balls a placed so ``dense" in $N$ that every geodesic curve hits some ball. Under these assumptions, the collisions of a probe particle in $M$ with the boundary $\d M$ ``feel the shape of $N$". 

\begin{corollary}\label{cor9.10} Let $(N_1, g_1)$ and $(N_2, g_2)$ be two closed hyperbolic manifolds of dimension $n \geq 3$. Let $M_i$ be produced from $N_i$ by removing the interiors of $k$ disjointed smooth $n$-balls, so that the geodesic flow on each $M_i$ is boundary generic\footnote{This is the case when the boundaries of the balls are strictly  convex in $N_i$.} and of the gradient type. 

If the scattering maps  $C_{v^{g_1}}$ and $C_{v^{g_2}}$ are conjugated with the help of a smooth diffeomorphism $\Phi^\d: \d(SM_1) \to \d(SM_2)$, then $(N_1, g_1)$ is  isometric to $(N_2, g_2)$. \hfill $\diamondsuit$
\end{corollary}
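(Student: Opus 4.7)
The plan is to apply Theorem \ref{th9.6} directly and then eliminate the undetermined scalar factor by invoking the rigidity of constant sectional curvature under conformal rescaling.

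First I would verify that all hypotheses of Theorem \ref{th9.6} are satisfied. Closed hyperbolic $n$-manifolds are locally symmetric of strictly negative (in fact constant) sectional curvature, and the hypothesis $n \geq 3$ is given. Each removed set $U_i$ is a disjoint union of smooth $n$-balls, so it deformation-retracts to a finite collection of points, a spine of codimension $n \geq 3$ in $N_i$. A standard general-position argument then shows that any loop, and any nullhomotopy of a loop, in $N_i$ can be pushed off this codimension-$\geq 3$ spine and hence off $U_i$; consequently the inclusion $M_i \hookrightarrow N_i$ induces an isomorphism $\pi_1(M_i) \to \pi_1(N_i)$, as required. The remaining hypotheses, namely that each $g_i|_{M_i}$ is boundary generic and of the gradient type and that the scattering maps are smoothly conjugated by $\Phi^\d$, are exactly the assumptions of the corollary. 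Theorem \ref{th9.6} therefore produces a diffeomorphism $\phi: N_1 \to N_2$, determined by $\Phi^\d$, together with a positive constant $c$ such that $\phi^\ast(g_2) = c\cdot g_1$.

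Second I would show that $c = 1$. Since $\phi$ is a diffeomorphism, $\phi^\ast(g_2)$ has the same pointwise sectional curvatures as $g_2$, namely the constant value $-1$ on all of $N_1$. On the other hand, scaling a Riemannian metric by a positive constant $c$ rescales every sectional curvature by $1/c$, so $c\cdot g_1$ has constant sectional curvature $-1/c$. Comparing these two expressions gives $-1 = -1/c$, which forces $c = 1$. Hence $\phi^\ast(g_2) = g_1$, i.e.\ $\phi$ is an isometry between $(N_1, g_1)$ and $(N_2, g_2)$, which is the desired conclusion.

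I do not anticipate a genuine obstacle: the corollary is essentially a packaging of Theorem \ref{th9.6} together with two elementary observations, namely the $\pi_1$-isomorphism arising from excision along a codimension-$\geq 3$ spine and the scaling behaviour of sectional curvature. The only mildly delicate point is to recall that the constant $c$ supplied by Theorem \ref{th9.6} is a single scalar (not a pointwise conformal factor), which is guaranteed by the statement of that theorem and is in any case consistent with the constancy of sectional curvature on both sides.
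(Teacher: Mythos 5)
Your proof is correct and follows the route the paper intends (the paper labels Corollary~\ref{cor9.10} an ``immediate corollary'' of Theorem~\ref{th9.6} and omits the argument). You supply the two missing steps: verifying the $\pi_1$-hypothesis via the codimension-$\geq 3$ spine (which the paper's own footnote to Theorem~\ref{th9.6} flags as the relevant point), and eliminating the scalar $c$ by comparing constant sectional curvatures under $\phi^\ast(g_2) = c\cdot g_1$.
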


Let $(N, g)$ be a closed locally symmetric Riemannian manifold with negative sectional curvature, and let $U \subset N$ be a smooth codimension zero submanifold. Take $M = N \setminus \textup{int}(U)$ and assume that $g|_M$ is boundary generic and of the gradient type. 

Recall that, by the Mostow Theorem, the isometry group $\mathsf{Iso}(N, g)$ is isomorphic to the group $\mathsf{Aut}(\pi_1(N))$ of automorphisms of the fundamental group $\pi_1(N)$. 

For an isometry $\psi: N \to N$, put $M_\psi  =_{\mathsf{def}} N \setminus \textup{int}(\psi(U))$. Evidently any isometry $\psi: N \to N$ induces a diffeomorphism $\Psi: SN \to SN$ which commutes with the geodesic flow on $SN$. Moreover, $\Phi =_{\mathsf{def}} \Psi|: SM \to SM_\psi$ strongly conjugates the geodesic flows, delivered by the metrics $g^M  = g|_{M}$ and $g^{M_\psi}  =g|_{M_\psi}$. As a result, the scattering maps $C_{v^{g^M}}$ and $C_{v^{g^{M_\psi}}}$ are conjugated with the help of the diffeomorphism $\Phi^\d = \Phi | : \d(SM) \to \d(SM_\psi)$. 

Is this construction the only way in which the scattering maps on the complements of isometric domains in $N$ can be conjugated?   More accurately, we ask the following question.

\begin{question} \emph{Let $(N, g)$ be a closed locally symmetric Riemannian manifold with negative sectional curvature. Consider two smooth codimension zero submanifolds $U, V \subset N$.  Let $\psi: U \to V$ be an isometry and denote by $\Psi: SU \to SV$ the diffeomorphism, induced by $\psi$. Form the submanifolds  $M = N \setminus \textup{int}(U)$ and $L = N \setminus \textup{int}(V)$.}

\emph{Assume that the metrics $g_M =_{\mathsf{def}} g|_M$ and $g_L =_{\mathsf{def}} g|_L$ are boundary generic and of the gradient type. Also assume that the scattering maps $C_{v^{g_M}}$ and $C_{v^{g_L}}$ are conjugated with the help of a diffeomorphism $\Psi^\d =_{\mathsf{def}} \Psi|_{\d(SU)}$.}

Can we conclude that there exists an isometry $\tilde\psi: N \to N$ such that $\tilde\psi|_U = \psi$? \hfill $\diamondsuit$
\end{question}

\section{The Inverse Geodesic Scattering Problem in Presence of Length Data}

Now we will enhance the scattering data $C_{v^g}$ by adding information about the length (equivalently, travel time) along each $v^{g}$-trajectory. This new combination of data is commonly called ``\emph{lens data}".

\begin{definition}\label{def9.?} Let $g$ be a smooth metric on a compact manifold $M$, and $F: SM \to \R$ a smooth function such that $dF(v^g) > 0$. For each point $w \in \d_1^+(SM)$, let $\tilde\g_w$ be the segment of the  $v^g$-trajectory that connects $w$ and $C_{v^g}(w)$. We denote by $\g_w$ its image under the projection $\pi: SM \to M$, and by $l_g(w)$ the length of the geodesic arc $\g_w$.\smallskip

We say that the function $F$ is \emph{balanced}  if, for each point $w \in \d_1^+(SM)$, the variation $F(C_{v^g}(w)) - F(w)$ is equal (up to an universal constant) to the arc  length $l_g(w)$. \smallskip

We call a gradient type Riemannian metric $g$ \emph{balanced}, if $v^g$ admits a balanced Lyapunov function $F$. \hfill $\diamondsuit$
\end{definition}

%\begin{example} 
\noindent{\bf Example 4.1.} Let $M$ be a compact smooth domain in the hyperbolic space $(\H^n, g_\H)$. Using a modification $F$ of the Lyapunov  function $d_\H$ from Example 2.3, we will see that the restriction of the hyperbolic metric to $M$ is $F$-balanced. Let us sketch the argument, based on the Poincar\`{e} model of $\H^n$. Any geodesic in $\H^n$ is orthogonal to the virtual boundary $S_\infty^{n-1}$ of $\H^n$.  Therefore, using the compactness of $M$, there exists a big Euclidean ball $B \supset M$, whose center is in $M$ and such that any geodesic curve $\g$ in $\H^n$, which intersects $M$, has two distinct transversal intersections with the boundary $\d B$. The orientation of $\g$ picks a single point $a_\g \in \d B \cap \g$, where the velocity vector points inside of $B$. For any $w = (m, v) \in SM$,  we consider the geodesic $\g_w$ through $m$ in the direction of $v$ and a point $a_{\g_w} \in \d B$. We introduce the smooth function $F: SM \to \R$ by the formula $F(w) =_{\mathsf{def}} \textup{dist}_\H(a_{\g_w}, m)$. Evidently, $dF(v^{g_\H}) > 0$ and the variation of $F$ along any geodesic arc is the length of that arc. 

By a very similar argument, any compact domain $M$ in the Eucledian space $\mathsf E^n$ inherits a flat balanced metric of the gradient type.
\smallskip

These observations can be generalized. Let $M$ be a compact smooth codimension zero submanifold of a compact connected Riemannian manifold $(N, g)$ with boundary. Assume that every geodesic $\g$ in $N$, such that $\g \cap M \neq \emptyset$,  intersects $\d N$ transversally at a pair of points. Then, by a construction, similar  to the previous one, the metric $g|_M$ is of the gradient type and balanced.
\hfill $\diamondsuit$ 
%\end{example}
\smallskip

For a traversing vector field $v$ on compact manifold $X$, let $I_x^y$ denote the segment of the $v$-trajectory $\g$ that is bounded by a pair of points $x, y \in \g$. Given a $1$-form $\theta$ on $X$, we use the notation ``$\int_x^y \theta$" for the integral $\int_{I_x^y}\theta$.

\begin{lemma}\label{lem9.x} Let $(X, \mathsf g)$ be a compact smooth Riemannian manifold with boundary. Let $v \neq 0$ be a smooth traversing and boundary generic vector field, and let $\a$ be a smooth $1$-form on $X$ such that $\a(v) > 0$. 
 
 Assume that for each $x \in \d_1^+X(v)$, the integrals $\int_x^{C_v(x)}  \|v\|_{\mathsf g}\, d\mathsf g$ \footnote{Here $C_v: \d_1^+X(v) \to \d_1^-X(v)$ denotes the $v$-generated causality map, and $d\mathsf g$ denotes the measure on the trajectory $\g_x$ induced by the metric $\mathsf g|_{\g_x}$.}  and $\int_x^{C_v(x)}\a$ are equal.

Then there exists a homeomorphism  $\Theta: X \to X$ with the following properties: 
\begin{itemize}
\item $\Theta|_{\d X}$ is the identity,
\item each $v$-trajectory $\g$ is invariant under $\Theta$, 
\item each restriction $\Theta |: \g \to \g$ is a smooth diffeomorphism, 
\item  $(\Theta|_\g)^\ast(\a)(v) = \|v\|_{\mathsf g}$. 
\begin{eqnarray}\label{eq9.?}
\end{eqnarray}
\end{itemize}

If $v$ is such that any $v$-trajectory $\g$ is either transversal to $\d X$ at \emph{some} point from $\g \cap \d X$, or $\g \cap \d X$ is a singleton of multiplicity $2$, then the homeomorphism $\Theta$ is a smooth diffeomorphism. 
\end{lemma}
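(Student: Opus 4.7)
The plan is to construct $\Theta$ trajectory by trajectory as the unique reparametrization converting the weighted arc-length form $\|v\|_{\mathsf g}\, d\mathsf g$ into $\alpha$, then to promote this trajectory-wise construction to a global homeomorphism (diffeomorphism under the extra hypothesis). First, fix $w \in \partial_1^+X(v)$ and parametrize its trajectory $\gamma_w$ by the $v$-flow time $t \in [0, T(w)]$, where $T(w) \in [0, \infty)$ is the travel time from $w$ to $C_v(w)$. Consider the two smooth strictly increasing antiderivatives
\[ A_w(t) = \int_0^{\gamma_w(t)} \alpha, \qquad B_w(t) = \int_0^{\gamma_w(t)} \|v\|_{\mathsf g}\, d\mathsf g, \]
both vanishing at $t = 0$ and sharing the common endpoint value $A_w(T(w)) = B_w(T(w))$ by hypothesis. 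Set $\psi_w := A_w^{-1} \circ B_w : [0, T(w)] \to [0, T(w)]$, a smooth orientation-preserving self-diffeomorphism fixing both endpoints (and the identity on singleton trajectories), and define $\Theta(\gamma_w(t)) := \gamma_w(\psi_w(t))$. The first three bullets of the conclusion are immediate. The fourth follows from $\psi_w'(t) = B_w'(t)/A_w'(\psi_w(t))$ and the chain rule: the pulled-back $1$-form $(\Theta|_{\gamma_w})^\ast \alpha$ evaluated on $v$ agrees with the integrand of $B_w$, giving $(\Theta|_{\gamma_w})^\ast(\alpha)(v) = \|v\|_{\mathsf g}$ on $\gamma_w$.

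Next I would assemble the trajectory-wise diffeomorphisms into a global map and show it is a homeomorphism of $X$. Bijectivity of $\Theta : X \to X$ is immediate from trajectory-wise bijectivity and the partition of $X$ into $v$-trajectories. For continuity, the travel-time function $T : \partial_1^+X(v) \to \R_{\geq 0}$ of a boundary generic traversing field is upper semicontinuous on its domain and smooth on each open Morse stratum $\partial_{1, \omega}^+X(v)$ of fixed combinatorial tangency type $\omega \in \Omega^\bullet$ (see \cite{K1}, \cite{K4}). Since $\alpha$ and the positive measure $\|v\|_{\mathsf g}\, d\mathsf g$ are smooth on $X$, the families $\{A_w\}$ and $\{B_w\}$ depend jointly continuously on $(w, t)$, and so does $\psi_w(t) = A_w^{-1}(B_w(t))$, yielding continuity of $\Theta$ off the tangent loci. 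Across a tangent singleton $\gamma_w$, trajectories of nearby points have short travel times during which $A$ and $B$ vanish simultaneously with matched leading order; a l'H\^opital-type estimate keeps $\psi$, and hence $\Theta$, continuous through the degeneration. The inverse $\Theta^{-1}$ is constructed symmetrically by swapping $A$ and $B$, and is continuous by the same argument.

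Under the extra hypothesis in the final sentence, every trajectory is either transversal to $\partial X$ at some point or is a quadratic-tangency singleton with $\gamma \cap \partial X$ of multiplicity $2$. Near a transversal boundary intersection, the $v$-flow admits a smooth flow-box normal form in which $A_w$, $B_w$, and $\psi_w$ are jointly smooth in $(w, t)$, so $\Theta$ is smooth there. Near a quadratic tangency singleton, a standard smooth fold normal form for $v|_{\partial X}$ applies, in which both $A_w(t)$ and $B_w(t)$ vanish to the same (cubic) order in the natural bounce parameter of the nearby grazing trajectories. Hence $\psi_w$ extends smoothly across the tangency locus with non-vanishing derivative, and the assembled $\Theta$ is a global smooth diffeomorphism.

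The principal obstacle I foresee is exactly this final smoothness step across the tangent stratification. The extra hypothesis excludes tangency types in $(3)_\succeq \cup (4)_\succeq \subset \Omega^\bullet$; without it, $A_w$ and $B_w$ would agree only to a finite higher-order jet at degenerate trajectory endpoints, forcing $\psi_w$ to be merely H\"older rather than $C^\infty$. This is the same difficulty that motivates property $\mathsf A$ in Definition \ref{def9.4a} and the smooth form of the Holography Theorem 3.1 of \cite{K4}, and I would import the local smoothness estimates established there. Continuity in the general case, by contrast, is a soft consequence of matched vanishing rates along degenerating trajectories and goes through for any boundary generic traversing $v$.
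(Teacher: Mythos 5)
Your construction is exactly the one in the paper: trajectory by trajectory, $\Theta$ is the reparametrization $K^{-1}\circ L$ (in your notation $A_w^{-1}\circ B_w$), which fixes the endpoints because the two arc-integrals of $\a$ and $\|v\|_{\mathsf g}\,d\mathsf g$ agree. Your derivation of the fourth bullet and of smoothness and bijectivity on each individual trajectory is correct and coincides with what the paper does. The gap is in the passage from trajectory-wise to global continuity.

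You note that the travel time $T$ is only upper semicontinuous, but then conclude that $\Theta$ is continuous ``off the tangent loci'' because $A_w,B_w$ depend jointly continuously on $(w,t)$. This does not close the argument, for the following reason. Your $\Theta$ on a point $y$ is defined relative to the entry point $w(y)$ of the arc containing $y$, and the assignment $y\mapsto w(y)$ is discontinuous precisely along the forward shadows of the interior tangency strata $\d_2X(v)$. Concretely, let $\g_0\cap\d X = \{x_1,\dots,x_k\}$ with $k\geq 3$ and $x_i$ ($1<i<k$) interior tangencies. A nearby trajectory $\hat\g$ may meet $\d X$ only near $x_1$ and $x_k$, so for a point $y\in\hat\g$ close to $z\in[x_i,x_{i+1}]$, your formula for $\Theta(y)$ uses base point $\hat w\approx x_1$, whereas $\Theta(z)$ on $\g_0$ uses base point $x_i$. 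The two base points are \emph{not} close, so joint continuity of $(w,t)\mapsto A_w(t)$ cannot, by itself, yield $\Theta(y)\approx\Theta(z)$. What makes the offsets cancel is precisely the lemma's hypothesis applied to the sub-arcs: $\int_{x_1}^{x_i}\a = \int_{x_1}^{x_i}\|v\|_{\mathsf g}\,d\mathsf g$ (obtained by summing the identities over $[x_l,x_{l+1}]$), together with the smallness of the contributions from the short excursions of $\hat\g$ near $\d X$ that a trajectory close to $\g_0$ may or may not make. The paper carries out exactly this comparison (working in an extension $\hat X\supset X$, comparing the integrals $K_\dagger, K_\ddagger$, etc.). Your ``l'H\^opital'' remark treats only the tangent \emph{singleton} case; the above multi-arc scenario, where the compatibility of offsets requires invoking the hypothesis again, is the harder and more prevalent one, and your write-up does not address it.

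Put another way: the hypothesis $\int_x^{C_v(x)}\a = \int_x^{C_v(x)}\|v\|_{\mathsf g}\,d\mathsf g$ is needed twice. Once, as you use it, to make each $\psi_w$ fix both endpoints; and a second time, which you omit, to make the piecewise definitions of $\Theta$ on consecutive arcs of a long trajectory fit together with the definition on the single long arcs of nearby perturbed trajectories. Your smoothness discussion under the extra hypothesis has the right flavor (transversal cross-sections give smooth flow-box coordinates; quadratic tangency singletons are handled via a fold normal form), but it inherits the same caveat: it must also account for the matching of base points along trajectories with several intersections.
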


\begin{proof} Consider two strictly monotone smooth $y$-functions on the arc $I_x^{C_v(x)}$: $$K(y) = \int_x^y \a, \qquad L(y) = \int_x^y \|v\|_g\, d\mathsf g.$$ 

Put $\Theta(y) =_{\mathsf{def}} (K^{-1} \circ L)(y)$. In other words, for each $x \in \d_1^+X(v)$ and $y \in I_x^{C_v(x)}$, $\Theta(y)$ is well-defined by the identity $$\int_x^y \a = \int_x^{\Theta(y)} \|v\|_\mathsf g\, d\mathsf g.$$ By the lemma hypotheses, $\Theta(x) = x$ and $\Theta(C_v(x)) = C_v(x)$ for all $x \in \d_1^+X(v)$. Thus, $\Theta(I_x^{C_v(x)}) = I_x^{C_v(x)}$. As a result, $\Theta$ is the identity on the boundary $\d X$. 

For any  $y' \in I_{x}^{C_v(C_v(x))}$, put 
$$\tilde K(y') =_{\mathsf{def}} \int_x^{y'} \a = L + \int^{y'}_{C_v(x)}\a, \qquad \tilde L(y') =_{\mathsf{def}}  \int_x^{y'}\|v\|_{\mathsf g}\, d\mathsf g = L + \int^{y'}_{C_v(x)}\|v\|_{\mathsf g}\, d\mathsf g,$$ where $L$ denotes  $\int_x^{C_v(x)} \|v\|_{\mathsf g}\, d\mathsf g$. Since $\tilde K$ and $K$, as well as $\tilde L$ and $L'$, differ by the same parallel shift $L$, we have $(K^{-1} \circ L)(y') = (\tilde K^{-1} \circ \tilde L)(y')$.
By the continuity of $\mathsf g$ and $\a$, we get that the $y'$-function $\tilde K^{-1} \circ \tilde L$ is continuous in the vicinity of $C_v(x) \in \g_x$. Thus $\Theta(y)$ and $\Theta(y')$ are close for any two points $y, y' \in \g_x$ that are sufficiently close to the point $C_v(x)$. 

By a similar inductive argument in $i \in [1, k]$, applied to the intervals $\{[x, C_v^{(\circ i)}(x)]\}_{i \in [1, k]}$, where $C_v^{(\circ i)}$ stands for the $i$-th iteration of the partially defined map $C_v$, we get that $\Theta|: \g_x \to \g_x$ is a homeomorphism. 

Note that the  derivative $\Theta' > 0$ in $I_x^{C_v(x)}$ since, by the definition of $\Theta$,  $\Theta'(y) = \|v(y)\|_{\mathsf g} / \a(v(y))$ for all $y \in \g_x$.  
Therefore, the restriction of $\Theta$ to any $v$-trajectory $\g$ is a smooth diffeomorphism.\smallskip

In order to show that $\Theta: X \to X$ is a homeomorphism, we embed $X$ into an open manifold $\hat X$ and extend smoothly the field $v$, the form $\a$, and the metric $\mathsf g$ into a open neighborhood $U$ of $X$ in $\hat X$ so that the extensions $\hat v$, $\hat \a$,  $\hat{\mathsf g}$ have the property $\hat\a(\hat v) > 0$ in $U$. In what follows, we may adjust the size of $U$ according to the needs of the arguments. So we will treat $\hat X = U$, $\hat v$, $\hat \a$,  $\hat{\mathsf g}$ as a germs. 

Note that any $v$-trajectory $\g$ is contained in a unique $\hat v$-trajectory $\hat \g$.\smallskip

In the next paragraph, we will rely on few basic facts about boundary generic traversing flows (see \cite{K3}). Let $\g_0$ be a $v$-trajectory. Since $v$ is boundary generic, the intersection $\g_0 \cap \d X$ is a finite ordered collection of points $\{x_1, x_2, \dots, x_k\}$.  So $C_v(x_i) = x_{i+1}$ for all $i \in [1, k-1]$, unless $\g_0 \cap \d X$ is a singleton ($k = 1$), in which case $C_v(x_1) = x_1$.

Let $V$ be a cylindrical neighborhood of $\hat\g_0$ in $\hat X$ which consists of $\hat v$-trajectories. By choosing an appropriate $U$, we may assume that $\hat\g_0 \cap X = \g_0$. We may also assume that $V$ admits a system of coordinates $(u, \vec w)$ such that the $\hat v$-trajectories are given by the equations $\{\vec w = \vec {const}\}$. For any $x \in \hat\g$, we denote by $S_x$ the hypersurface $\{u = u(x)\}$.

For any $\hat\g \subset V$, the intersection $\hat\g \cap \d X$ consists of finitely many points, while $\hat\g \cap X$ consists of finitely many closed intervals or singletons. Picking $V$ sufficiently narrow, the set $\hat\g \cap \d X$ may be divided into at most $k$ disjoint (possibly empty) subsets $A_{\hat\g, i}$ that correspond to the elements of $x_i \in \g_0 \cap \d X$.  The cardinality of each set $A_{\hat\g, i}$ does not exceed $m(x_i)$, the multiplicity of tangency of $\g_0$ to $\d X$ at $x_i$, and  $\#(A_{\hat\g, i}) \equiv m(x_i) \mod 2$. 

Let $\{S_i =_{\mathsf{def}} S_{x_i}\}_{1\leq i \leq k}$ be disjoint transversal sections of the $\hat v$-flow in the vicinity of $\g_0 \subset V$. By the choice of $V$ and by the construction of the non-empty sets $A_{\hat\g, i} \subset \hat\g$, the points of  $A_{\hat\g, i}$ are located in the vicinity of the point $y_i = S_i \cap \hat\g$. \smallskip

Let us compare $\Theta(x)$ and $\Theta(y)$ for a pair of points $x \in \g_0$ and $y \in \hat\g \cap X$ that are close in $X$. We may assume that,  for some $i$,  $x \in [x_i, x_{i+1}) \subset \hat\g_0$ and that $z =_{\mathsf{def}} S_x \cap \hat\g$ and $y$ are close in $\hat\g$. By the previous argument, $\Theta(z)$ and $\Theta(y)$ are close, so it suffices to compare $\Theta(z) = (K^{-1} \circ L)(z)$ and $\Theta(x) = (K^{-1} \circ L)(x)$.  Here, by the definition, $K(x) = \int_{x_i}^x \a, \; L(y) = \int_{x_i}^x \|v\|_\mathsf g\, d\mathsf g$, and $K(z) = \int_{z^\star}^z \a, \; L(z) = \int_{z^\star}^z \|v\|_\mathsf g\, d\mathsf g$, where $z^\star$ is the lowest point in the connected component of the set $\hat\g \cap X$ that contains $z$. By the choice of $V$,  for some $j \leq i$, $z^\star$ is a point of the set $A_{\hat\g, j}$, and thus is close to both $x_j \in \g_0$ and to the point $z_j^\star =_{\mathsf{def}} S_j \cap \hat\g$.

By the continuity of $\a$ and $\mathsf g$ and by the choice of $V \supset \g_0$, the value $K_\dagger(x) = \int_{x_j}^x \a$ is close to the value $K_\ddagger(z) = \int_{z^\star_j}^z \a = \int_{z^\star_j}^{z^\star} \a + K(z)$ and the value  $L_\dagger(x) = \int_{x_j}^x \|v\|_\mathsf g\, d\mathsf g$ is close to $L_\ddagger(z) = \int_{z^\star_j}^z \|v\|_\mathsf g\, d\mathsf g =  \pm \int_{z^\star_j}^{z^\star} \|v\|_\mathsf g\, d\mathsf g + L(z)$. Therefore, $(K_\dagger^{-1} \circ L_\dagger)(x)$ is close to $(K_\ddagger^{-1} \circ L_\ddagger)(z)$. Since, by the lemma hypotheses, $\int_{x_j}^{x_i} \a = \int_{x_j}^{x_i} \|v\|_\mathsf g\, d\mathsf g$, we get that  $(K_\dagger^{-1} \circ L_\dagger)(x)$ is close to $(K^{-1} \circ L)(x)$. Since $\int_{z^\star_j}^{z^\star} \a$ and $\int_{z^\star_j}^{z^\star} \|v\|_\mathsf g\, d\mathsf g$ are small, we conclude that $(K_\ddagger^{-1} \circ L_\ddagger)(z)$ is close to  $(K^{-1} \circ L)(z)$.  As a result, $\Theta(x)$ and $\Theta(z)$ are close in $V$. Thus $\Theta: X \to X$ is continuous. By the same token, $\Theta^{-1}: X \to X$ is continuous as well. \smallskip

The argument validating that $\Theta$ is a smooth diffeomorphism (under the Lemma \ref{lem9.x} hypotheses, the last paragraph), is similar the the one in the proof of Theorem 3.1 \cite{K4}. It employs that the boundary $\d X$, at a point of transversal intersection $\g \cap \d X$, is a smooth section $\s$ of the $v$-flow, together with the hypotheses that the metric $\mathsf g$ and the form $\a$ are smooth. This implies that the transformation $\Theta$, defined by the formula $\int_{\g_x \cap \s}^x \a = \int_{\g_x \cap \s}^{\Theta(x)}\; \|v\|_\mathsf g\, d\mathsf g$, and the similarly defined transformation $\Theta^{-1}$ are smooth in $\hat X$. 
\end{proof}

For $i= 1, 2$, let $\{\psi_i^t: SM_i \to SM_i\}_t$ denote the geodesic flow transformations, partially-defined for appropriate moments $t \in  \R$.

\begin{theorem}\label{th9.10}{\bf (the  strong topological rigidity of the geodesic flow for the inverse scattering problem in the presence of length data)}\smallskip

Let $(M_1, g_1)$ and  $(M_2, g_2)$ be two smooth compact connected Riemannian $n$-manifolds with boundaries.  Let the metric $g_1$ be boundary generic, and $g_2$ be of the gradient type, boundary generic, and balanced. 
 
Assume that the scattering maps $$C_{v^{g_1}}: \d_1^+(SM_1) \to \d_1^-(SM_1)\;\; \text{and} \;\; C_{v^{g_2}}: \d_1^+(SM_2) \to \d_1^-(SM_2)$$ are conjugated by a smooth diffeomorphism $\Phi^\d: \d_1(SM_1) \to \d_1(SM_2)$.   
Moreover, assume that for each $w \in \d_1^+(SM_1)$, the length data agree: $l_{g_2}(\Phi^\d(w)) = l_{g_1}(w)$.

Then 
\begin{itemize}
\item $g_1$ is a balanced metric of the gradient type.\smallskip

\item $\Phi^\d$ extends to a homeomorphism $\Phi: SM_1 \to SM_2$ such that $$\Phi \circ \psi_1^t(w) = \psi_2^t \circ \Phi (w)$$ for each $w \in SM_1$ and all $t \in \R$ for wich  $\psi_1^t(w)$ is well-defined.\smallskip

\item the restriction of $\Phi$ to each $v^{g_1}$-trajectory is a smooth diffeomorphism.\smallskip

\item If $g_2$ is such that no geodesic curve $\g \subset M_2$ is cubically tangent to $\d M_2$ at a pair of distinct points, and no geodesic curve is a singleton of multiplicity $4$, 
then the conjugating homeomorphism $\Phi: SM_1 \to SM_2$ is a smooth diffeomorphism. 
\end{itemize}  
\end{theorem}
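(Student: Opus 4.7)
The first move is to deduce that $g_1$ is of the gradient type and to produce a (not yet time-preserving) topological conjugation. The argument at the outset of the proof of Theorem~\ref{th9.2} applies verbatim: the fact that $C_{v^{g_2}}$ is defined on all of $\d_1^+(SM_2)$, together with the conjugation by the diffeomorphism $\Phi^\d$, forces $C_{v^{g_1}}$ to be defined on all of $\d_1^+(SM_1)$, so $v^{g_1}$ is traversing and, by Lemma~4.1 of \cite{K1}, of the gradient type. Theorem~\ref{th9.2} then produces a homeomorphism $\Phi_0:SM_1\to SM_2$ extending $\Phi^\d$ that carries each leaf of $\mathcal F(v^{g_1})$ to a leaf of $\mathcal F(v^{g_2})$ by an orientation-preserving smooth diffeomorphism.

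Next, I would show that $g_1$ is balanced. Choose a balanced Lyapunov function $F_2$ for $v^{g_2}$, normalized so that $F_2(C_{v^{g_2}}(w'))-F_2(w')=l_{g_2}(w')$ for every $w'\in\d_1^+(SM_2)$, and let $F_1:=F_2\circ\Phi_0:SM_1\to\R$; as invoked in Corollary~\ref{cor9.1a}, the proof of the Holography Theorem from \cite{K4} can be arranged so that $F_1$ is a smooth Lyapunov function for $v^{g_1}$. Using $\Phi^\d\circ C_{v^{g_1}}=C_{v^{g_2}}\circ\Phi^\d$ on $\d_1^+(SM_1)$ and the length-agreement $l_{g_2}(\Phi^\d(w))=l_{g_1}(w)$, one computes
\[
F_1(C_{v^{g_1}}(w))-F_1(w)=F_2(C_{v^{g_2}}(\Phi^\d(w)))-F_2(\Phi^\d(w))=l_{g_2}(\Phi^\d(w))=l_{g_1}(w),
\]
so $F_1$ is balanced with universal constant $1$ and $g_1$ is a balanced metric of the gradient type, proving the first bullet.

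The core of the proof is to re-parameterize $\Phi_0$ along trajectories so as to preserve the flow parameter. I would apply Lemma~\ref{lem9.x} on each $SM_i$ with $v=v^{g_i}$, with $\mathsf g$ equal to the Sasaki metric (so $\|v^{g_i}\|_{\mathsf g}\equiv1$ and the Sasaki arclength along a $v^{g_i}$-trajectory agrees with the $g_i$-length of the projected geodesic), and with $\a=dF_i$. The balanced condition on $F_i$ is precisely the integral compatibility $\int_{w}^{C_{v^{g_i}}(w)}dF_i=l_{g_i}(w)=\int_{w}^{C_{v^{g_i}}(w)}\|v^{g_i}\|_{\mathsf g}\,d\mathsf g$ demanded by the lemma, and it yields homeomorphisms $\Theta_i:SM_i\to SM_i$ that fix $\d(SM_i)$ pointwise, preserve every $v^{g_i}$-trajectory as an orientation-preserving smooth diffeomorphism, and convert the arclength label of any point on a trajectory into the ``$F_i$-variation from the entry point'' label. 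Setting
\[
\Phi:=\Theta_2^{-1}\circ\Phi_0\circ\Theta_1:SM_1\longrightarrow SM_2,
\]
one checks on each $v^{g_1}$-trajectory through $w\in\d_1^+(SM_1)$ that $\Theta_1$ sends the arclength-$s$ point to the $F_1$-variation-$s$ point; $\Phi_0$ preserves the $F$-variation because $F_1=F_2\circ\Phi_0$ and $F_1(w)=F_2(\Phi^\d(w))$; and $\Theta_2^{-1}$ converts $F_2$-variation $s$ back to arclength $s$. Hence $\Phi(\psi_1^t(w))=\psi_2^t(\Phi^\d(w))$, which propagates to $\Phi\circ\psi_1^t=\psi_2^t\circ\Phi$ wherever $\psi_1^t$ is defined. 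Since each factor is a homeomorphism that restricts to a smooth diffeomorphism on every trajectory, and the $\Theta_i$ are identities on the boundaries, $\Phi$ is a homeomorphism extending $\Phi^\d$ and smooth on each trajectory, which settles the second and third bullets.

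For the last bullet, the tangency restrictions imposed on $g_2$---together with the fact that $\Phi_0$ transports combinatorial tangency types of trajectories between the two manifolds---place both $v^{g_1}$ and $v^{g_2}$ into the smoothness clause of Lemma~\ref{lem9.x} and into the smooth-conjugacy regime of the Holography Theorem (an analogue of property~$\mathsf A$). Each of $\Theta_1,\Phi_0,\Theta_2^{-1}$ is then a smooth diffeomorphism, so $\Phi$ is one as well. The principal technical obstacle in this scheme is controlling the global continuity---and, under the last bullet, the global smoothness---of the re-parameterizations $\Theta_i$ on trajectories that are tangent to $\d(SM_i)$ at interior points; but that is exactly the case Lemma~\ref{lem9.x} was engineered to handle, so the delicate analytic work has already been packaged into its proof.
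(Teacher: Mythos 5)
Your proposal reproduces the paper's argument essentially step for step: deduce traversality of $v^{g_1}$, invoke the Holography Theorem construction to get a leaf-preserving homeomorphism $\Psi$ with $\Psi^\ast F_2 = F_1$, use the length-agreement to conclude $F_1$ is balanced, apply Lemma~\ref{lem9.x} to both pairs $(dF_i,\mathsf g_i)$, and set $\Phi=\Theta_2^{-1}\circ\Psi\circ\Theta_1$. The only cosmetic difference is your order of construction (you define $F_1:=F_2\circ\Phi_0$ and then appeal to the Holography Theorem for smoothness, whereas the paper first extends $(\Phi^\d)^\ast(F_2|_{\d})$ to a smooth $F_1$ by Lemma~3.2 of \cite{K4} and then builds $\Psi$ respecting both), but you flag this and the resulting objects are the same.
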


\begin{proof} The proof is a modification of the arguments in the Holography Theorem 3.1 from \cite{K4}. As in that paper, we start with a balanced function $F_2:  SM_2 \to \R$ such that $dF_2(v^{g_2}) > 0$. We consider the pull-back $$F_1^\d =_{\mathsf{def}} (\Phi^\d)^\ast(F_2|_{\d X_2}): \d(SM_1) \to \R,$$ and using Lemma 3.2 from \cite{K4}, extend $F_1^\d$ to a smooth function $F_1: SM_1 \to \R$ so that $d F_1(v^{g_1}) > 0$. Then, as in the proof of the Holography Theorem, we define the scattering maps conjugating homeomorphism $\Psi: SM_1 \to SM_2$ so that: $\Psi|_{\d(SM_1)} = \Phi^\d$, $\Psi$ maps each $v^{g_1}$-trajectory $\tilde\g$ to a $v^{g_2}$-trajectory, the restriction of $\Psi|_{\tilde\g}$ is a diffeomorphism, and, due to  the construction of $\Psi$,  $F_1 = \Psi^\ast(F_2)$. 

By the latter property, for any $w \in \d_1^+(SM_1)$, we get the equality
$$\int_w^{C_{v^{g_1}}(w)} dF_1 =  \int_{\Psi(w)}^{C_{v^{g_2}}(\Psi(w))} dF_2.$$

Let $\mathsf g_i$, $i = 1, 2$, denote the Sasaki metric on $SM_i$, induced by the metric $g_i$ on $M_i$. Let $\pi_i: SM_i \to M_i$ be the obvious map. Since $v^{g_i}$ is orthogonal in $\mathsf g_i$ to the fibers $\pi_i^{-1}(\ast)$, we conclude that $\|v^{g_i}\|_{\mathsf g_i} = \|D\pi_i(v^{g_i})\|_{g_i} = 1$.  
\smallskip

Since $F_2$ is balanced, we get 
$$\int_{\Psi(w)}^{C_{v^{g_2}}(\Psi(w))} dF_2 = \int_{\Psi(w)}^{C_{v^{g_2}}(\Psi(w))} \|v^{g_2}\|_{\mathsf{g}_2} \,d\mathsf{g}_2 = \int_{\Psi(w)}^{C_{v^{g_2}}(\Psi(w))}  \,d\mathsf{g}_2,$$
the length of the geodesic arc $\g_{\Psi(w)}$ in $M_2$. 

On the other hand, by the hypotheses of the theorem, $$l_{g_1}(w) \;=_{\mathsf{def}} \; \int_{w}^{C_{v^{g_1}}(w)} \,d\mathsf{g}_1 = \int_{\Psi(w)}^{C_{v^{g_2}}(\Psi(w))} \,d\mathsf{g}_2 \;=_{\mathsf{def}}\; l_{g_2}(\Psi(w)).$$

So, $F_1$ is balanced as well. Therefore, Lemma \ref{lem9.x} is applicable to both pairs $(dF_1, \mathsf{g}_1)$ and $(dF_2, \mathsf{g}_2)$. By the lemma, there exist homeomorphisms $\Theta_1: SM_1 \to SM_1$ and $\Theta_2: SM_2 \to SM_2$ with the properties as in (\ref{eq9.?}).\smallskip

Finally, we construct a homeomorphism  $\Phi =_{\mathsf{def}} (\Theta_2)^{-1} \circ \Psi \circ \Theta_1$. For such a choice, thanks to properties (\ref{eq9.?}), we get $d\mathsf{g}_1|_\g = \Phi^\ast(d\mathsf{g}_2|_{\Phi(\g)})$ for all $v^{g_1}$-trajectories $\g$. So $\Phi \circ \psi_1^t(w) = \psi_2^t \circ \Phi (w)$ for each $w \in SM_1$ and all $t \in \R$ for which  $\psi_1^t(w)$ is well-defined. \smallskip

If $g_2$ is such that no geodesic curve $\g \subset M_2$ is cubically tangent to $\d M_2$ at a pair of distinct points and no geodesic curve is a singleton of multiplicity $4$, then by Lemma \ref{lem9.5}, $v^{g_2}$, and thus $v^{g_1}$ both possess property $\mathsf A$ from Definition \ref{def9.4a}. So, by Lemma \ref{lem9.x}, the conjugating homeomorphism $\Phi: SM_1 \to SM_2$ is a smooth diffeomorphism.
\end{proof}

Theorem \ref{th9.10} leads to the following ``\emph{Cut $\&$ Scatter Theorem}": 

\begin{theorem}\label{th9.11} Let $(N_1, g_1)$ and $(N_2, g_2)$  be two closed smooth Riemannian $n$-manifolds.  For $i = 1, 2$, let $U_i $ be a codimension zero submanifold of $N_i$ with a smooth boundary $\d U_i$.  Put $M_i =_{\mathsf{def}} N_i \setminus \textup{int}(U_i)$. Consider a compact neighborhood $V_i \subset N_i$ of $U_i$ whose interior contains $U_i$.

Assume that the metric $g_2|_{M_2}$ is boundary generic, of the gradient type, balanced, and satisfies property $\mathsf A$\footnote{If $U_2$ is strictly concave in $N_2$, then these hypotheses reduce to the requirement that $g_2|_{M_2}$ is of the gradient type and  balanced.}. 

Let a bijection $\psi: V_1 \to V_2$ be an \emph{isometry} with respect to $g_1|_{V_1}$ and $g_2|_{V_2}$. Consider the diffeomorphism $\Psi: SV_1 \to SV_2$, induced by $\psi$, and its restriction $\Psi^\d$ to $\d(SU_1)$.  \smallskip

If the scattering maps 
$$C_{v^{g_1|_{M_1}}}: \d_1^+(SM_1) \to \d_1^-(SM_1) \;\;  \text{and} \;\;\; C_{v^{g_2|_{M_2}}}: \d_1^+(SM_2) \to \d_1^-(SM_2)$$  
are conjugated  with the help of the diffeomorphism $\Psi^\d$ and if, for each $w \in \d_1^+(SM_1)$, the length data agree: $l_{g_2|_{M_2}}(\Psi^\d(w)) = l_{g_1|_{M_1}}(w)$, then the geodesic flows on $SN_1$ and $SN_2$ are \emph{strongly} smoothly conjugated in the sense of Definition \ref{def9.4}. 
\end{theorem}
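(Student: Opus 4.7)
The plan is to apply Theorem \ref{th9.10} to the pair $(M_1,g_1|_{M_1}), (M_2,g_2|_{M_2})$, and then to glue the resulting diffeomorphism with the isometry-induced map $\Psi := d\psi: SV_1 \to SV_2$ along the common boundary $\d(SU_1) = \d(SM_1)$.

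First, I verify the hypotheses of Theorem \ref{th9.10}. By Lemma \ref{lem9.5a}, boundary genericness of a metric near $\d M$ depends only on the appropriate jet of the boundary-defining function under the exponential map; since $\psi$ is an isometry between neighborhoods of $\d U_1$ and $\d U_2$, the germs of $g_1$ along $\d M_1$ and of $g_2$ along $\d M_2$ are isometric, so $g_1|_{M_1}$ is boundary generic because $g_2|_{M_2}$ is. All other hypotheses---gradient type, balanced, and property $\mathsf A$ for $g_2|_{M_2}$, conjugation of the scattering maps by the smooth $\Psi^\d := \Psi|_{\d(SU_1)}$, and equality of lens data---are already given. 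Theorem \ref{th9.10} then furnishes a smooth diffeomorphism $\Phi_M : SM_1 \to SM_2$ that extends $\Psi^\d$ and strongly conjugates the partial geodesic flows on the $SM_i$.

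Next, since $\psi$ is an isometry, the diffeomorphism $\Psi$ strongly conjugates the geodesic flows on the $SV_i$. I claim $\Phi_M$ and $\Psi$ coincide on an open collar $SC_1 \subset SM_1 \cap SV_1$ of $\d(SU_1)$. Choose $C_1 \subset W_1 := V_1 \cap M_1$ small enough that, for every $y \in SC_1$, the backward $v^{g_1|_{M_1}}$-orbit from $y$ to its entry point $w \in \d_1^+(SM_1)$ lies inside $SV_1$; this is possible because transversal entries produce short normal push-offs and, by property $\mathsf A$ (transferred to $g_1$ by $\psi$), every tangential contact is quadratic and hence essentially instantaneous. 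For such $y = \psi_1^t(w)$, strong conjugation by $\Phi_M$ yields $\Phi_M(y) = \psi_2^t(\Psi^\d(w))$, while strong conjugation by $\Psi$ along the same orbit, which remains in $SV_1$, yields $\Psi(y) = \psi_2^t(\Psi(w)) = \psi_2^t(\Psi^\d(w))$. The two maps therefore agree on the dense set of orbits transverse to $\d(SM_1)$, and by continuity on all of $SC_1$.

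Having the collar agreement in hand, I define
\[
\Phi_N(x) := \begin{cases} \Phi_M(x), & x \in SM_1, \\ \Psi(x), & x \in SU_1, \end{cases}
\]
which is consistent on $SC_1 \supset \d(SU_1)$ and hence smooth across the gluing locus; it is a diffeomorphism $SN_1 \to SN_2$ since each piece is, and it strongly and smoothly conjugates the complete geodesic flows on $SN_1$ and $SN_2$ because each piece does so on its domain and the two pieces fit along every orbit that crosses $\d(SU_1)$. The hard part will be the collar-agreement step: property $\mathsf A$ is essential there to control grazing orbits so that the identification $y = \psi_1^t(w)$ remains continuous up to the tangency locus, while the balanced and lens-data hypotheses ensure that the parameter $t$ appearing in the two expressions for $\Phi_M(y)$ and $\Psi(y)$ literally coincides, rather than only parametrizing the same unoriented geodesic arc.
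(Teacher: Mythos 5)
Your overall plan matches the paper's proof: invoke Theorem \ref{th9.10} to produce a diffeomorphism $\Phi_M : SM_1 \to SM_2$ extending $\Psi^\d$, then glue $\Phi_M$ with $\Psi$ along $\d(SU_1)$ to get a flow-conjugating diffeomorphism $SN_1 \to SN_2$. You also supply a useful detail the paper leaves implicit, namely that $g_1|_{M_1}$ is boundary generic (a hypothesis of Theorem \ref{th9.10}), which follows, via Lemma \ref{lem9.5a}, from $\psi$ being an isometry of neighborhoods of $\d U_1$ and $\d U_2$.

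The collar-agreement step, however, contains a genuine gap. You claim one can shrink the collar $C_1 \subset V_1 \cap M_1$ so that, for every $y \in SC_1$, the backward $v^{g_1|_{M_1}}$-orbit from $y$ to its entry point $w \in \d_1^+(SM_1)$ stays in $SV_1$. This is false: take a geodesic $\g$ in $N_1$ that enters $M_1$ transversally at a point far from $U_1$, travels across $M_1$ well outside $V_1$, and then passes near $\d U_1$ without ever touching it. At the closest-approach point, $y = (m, \dot\g(m))$ lies in $SC_1$ for any collar, yet its backward orbit to the entry point leaves $SV_1$ long before reaching $\d_1^+(SM_1)$, and the forward orbit is no better. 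Property $\mathsf A$ does not help here, because such $\g$ is not tangent to $\d U_1$ at all --- it simply misses $U_1$. The ``agreement on a dense set of transverse orbits, then continuity'' fallback also fails: the near-grazers that miss $U_1$ constitute an open set of trajectories, so there is an open subset of $SC_1$ on which agreement of $\Phi_M$ and $\Psi$ has not been established. The paper's remedy is to work not with a collar but with the set $\Sigma_1$, the union of all segments of $v^{g_1}$-trajectories in $SM_1 \cap SV_1$ that actually touch $\d(SM_1)$; this is precisely the locus where the flow-conjugacy comparison of $\Phi_M$ and $\Psi$ through a common boundary point is legitimate, it contains $\d(SU_1)$, but it is not an open neighborhood of it. Replace your $SC_1$ by $\Sigma_1$ and argue smoothness of the glued map $\Xi = \Phi_M \cup_{\Psi^\d}\Psi$ from the agreement on $\Sigma_1$ together with the separate smoothness of $\Phi_M$ and $\Psi$ on their overlapping domains; then the remaining steps of your write-up (strong conjugacy of the full geodesic flow via the arc-length forms on the two pieces) go through as in the paper.
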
 

\begin{proof} Let $\tilde\g$ be a $v^{g_i}$-trajectory in $SN_i$, $i= 1, 2$. Put $\tilde\g_{M_i} =_{\mathsf{def}} \tilde\g \cap SM_i$,  $\tilde\g_{U_i} =_{\mathsf{def}} \tilde\g \cap SU_i$, and $\tilde\g_{V_i} =_{\mathsf{def}} \tilde\g \cap SV_i$. 
For each point $x \in \d(SU_1)$, denote by $\tilde\g(x)$ the $v^{g_1}$-trajectory through $x$ in $SN_1$.  

Since $\psi: V_1 \to V_2$ is an isometry, the diffeomorphism $\Psi$ maps $\tilde\g_{V_1}(x)$ to $\tilde\g_{V_2}(\Psi(x))$, while preserving the $(\mathsf g_i|_{V_i})$-induced natural parameterizations on the two cures. On the other hand, by Theorem \ref{th9.10}, the diffeomorphism $\Phi$ maps $\tilde\g_{M_1}(x)$ to $\tilde\g_{M_2}(\Psi(x)) = \Phi(\tilde\g_{M_1}(x))$, while preserving the $(\mathsf g_i|_{M_i})$-induced natural parameterizations on the two cures. 

Let us denote by $\Sigma_i$ the union of all \emph{segments} of the $v^{g_i}$-trajectories in $SM_i \cap SV_i$ that have a nonempty intersection with $\d(SM_i)$. Thus, any point of $\Sigma_i$ is connected to a point of $\d(SM_i)$ by an arc $\delta \subset \Sigma_i$ of a $v^{g_i}$-trajectory. Note that $\bar\Sigma_i$, the closure of $\Sigma_i$ in $SN_i$, is a compact set, containing $\d(SM_i)$. We denote by $\Sigma_i^\bullet$ the set $SU_i \cup \bar\Sigma_i$.

Since $\Psi |_{\d(SM_1)} = \Phi|_{\d(SM_1)}$, and both diffeomorphisms, $\Phi$ and $\Psi$, preserve the natural parameterizations along the trajectories in the source and the target, we conclude that $\Phi |_{\Sigma_1} = \Psi |_{\Sigma_1}$ as maps. It is possible that $\Psi$ and $\Phi$ may differ in $SV_1 \setminus \Sigma_1^\bullet$. \smallskip

Consider the homeomorphism $\Xi: SN_1 \to SN_2$, defined by the formula $$\Psi \bigcup_{\Psi^\d} \Phi: \; SU_1 \bigcup_{\d(SU_1)} SM_1 \to  SU_2 \bigcup_{\d(SU_2)} SM_2.$$

By the properties of $\Phi$ an $\Psi$, the homeomorphism $\Xi$ maps the $v^{g_1}$-trajectories in $SN_1$ to the $v^{g_2}$-trajectories in $SN_2$.

By the definitions of the sets $\Sigma_i, \Sigma_i^\bullet$ and using that  $\Phi |_{\Sigma_1} = \Psi |_{\Sigma_1}$, we may interpret $\Xi$ as the homeomorphism

 $$\Psi \bigcup_{\Psi |_{\d(\Sigma_1^\bullet)}} \Phi: \; \Sigma_1^\bullet \bigcup_{\d(\Sigma_1^\bullet)} \big(SM_1 \setminus \mathsf{int}(\Sigma_1)\big)  \to  \Sigma_2^\bullet \bigcup_{\d(\Sigma_2^\bullet)} \big(SM_2 \setminus \mathsf{int}(\Sigma_2)\big).$$ 

Since $\Phi |_{\Sigma_1} = \Psi |_{\Sigma_1}$,  $\Phi$ and $\Psi$ are smooth diffeomorphisms, and $\Sigma_1 \supset \d(SU_1)$, we conclude that $\Xi$ is a smooth diffeomorphism in the vicinity of $\d(SU_1)$. 

By Theorem \ref{th9.10}, the diffeomorphism $\Phi = \Xi |_{SM_1}$ has the property $\Xi^\ast(d \mathsf g_2|_{\Xi(\g^\#)}) =  d \mathsf g_1|_{\g^\#}$ for every $(v^{g_i |_{M_i}})$-trajectory $\g^\# \subset SM_i$. By the hypotheses, $\psi: U_1 \to U_2$ is an isometry. Thus, for every $(v^{g_i |_{U_i}})$-trajectory $\g^\dagger \subset SU_i$, we have $\Xi^\ast(d \mathsf g_2|_{\Xi(\g^\dagger)}) =  d \mathsf g_1|_{\g^\dagger}$. \smallskip

Let $\tilde\g$ be an integral curve of the geodesic field $v^{g_1}$ on $SN_1$. The set $\tilde\g_{U_1} = \tilde\g \cap SU_1$  is the union of trajectories $\{\g^\dagger\}$ of the geodesic field $v^{g_1|_{U_1}}$, while the set $\tilde\g_{M_1} = \tilde\g \cap SM_1$ is the union of $v^{g_1|_{M_1}}$-trajectories $\{\g^\#\}$. Therefore, by the arguments above,  $\Xi^\ast(d \mathsf g_2|_{\Xi(\tilde\g)}) =  d \mathsf g_1|_{\tilde\g}$ for all $v^{g_1}$-trajectories $\tilde\g$ in $SN_1$. This implies that $\Xi \circ \phi_1^t = \phi_2^t \circ \Xi$ for all moments $t \in \R$, where $\phi_i^t$ denotes the geodesic flow diffeomorphism of $SN_i$. \smallskip

Thus the metrics $g_1$ and $g_2$ are geodesic flow strongly conjugate by a diffeomorphism $\Xi: SN_1 \to SN_2$ from the class $C^\infty(SN_1, SN_2)$.
\end{proof}
 
In the next three corollaries, we combine Theorem \ref{th9.11} with a number of classical results that make it possible to reconstruct the metric on special closed manifolds from the corresponding geodesic flows.

\begin{corollary}\label{cor9.13}  Let the pairs of smooth Riemannian  $n$-manifolds, $(N_1, g_1) \supset (M_1, g_1|)$ and $(N_2, g_2)  \supset (M_2, g_2|)$, be as in Theorem \ref{th9.11}.

If the scattering maps $C_{v^{g_1|_{M_1}}}$ and $C_{v^{g_2|_{M_2}}}$  are conjugated  with the help of the diffeomorphism $\Psi^\d =_{\mathsf{def}} \Psi |_{\d(SU_1)}$\footnote{As in Theorem \ref{th9.11}, $\Psi$ is induced by an isometry $\psi: V_1 \to V_2$.} and if, for each $w \in \d_1^+(SM_1)$, the length data agree: $l_{g_2|_{M_2}}(\Psi^\d(w)) = l_{g_1|_{M_1}}(w)$, then the manifolds $(M_1, g_1|_{M_1})$ and $(M_2, g_2|_{M_2})$ share the same volume.
\end{corollary}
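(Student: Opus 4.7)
My plan is to deduce the volume equality from Santal\'o's formula for non-trapping Riemannian manifolds with boundary. For each $i=1,2$, the metric $g_i|_{M_i}$ is of the gradient type, so by Lemma \ref{lem9.2} it is non-trapping, and the Santal\'o identity
\[
\mathrm{vol}(S^{n-1})\cdot\mathrm{vol}_{g_i|_{M_i}}(M_i) \;=\; \int_{\d_1^+(SM_i)} l_{g_i|_{M_i}}(w)\,\langle v(w), \nu_i(w)\rangle_{g_i}\, d\sigma_i(w)
\]
is at our disposal. Here $v(w)$ is the unit tangent vector represented by $w$, $\nu_i(w)$ is the inward unit normal to $\d M_i$ at $\pi(w)$, and $d\sigma_i$ is the natural product measure on $\d(SM_i)$ obtained from the Riemannian $(n-1)$-volume on $\d M_i$ and the round measure on the fibre sphere.

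First I would write this identity for $i=1$ and $i=2$ and then change variables in the second integral via the diffeomorphism $\Psi^\d:\d_1^+(SM_1)\to\d_1^+(SM_2)$ supplied by the hypotheses of Theorem \ref{th9.11}. The length-matching hypothesis $l_{g_2|_{M_2}}(\Psi^\d(w)) = l_{g_1|_{M_1}}(w)$ immediately identifies the first factor of the integrand. For the measure factor, I would use that $\Psi^\d$ is the restriction to $\d(SU_1)$ of $\Psi:SV_1\to SV_2$, which itself is the canonical bundle lift of the Riemannian isometry $\psi:V_1\to V_2$. Since $\d M_i\subset V_i$, the map $\psi$ restricts to an isometry of the boundaries, and its differential is a fibrewise linear isometry of tangent bundles carrying $\nu_1$ to $\nu_2$. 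Therefore
\[
(\Psi^\d)^\ast\!\bigl(\langle v, \nu_2\rangle_{g_2}\, d\sigma_2\bigr) \;=\; \langle v, \nu_1\rangle_{g_1}\, d\sigma_1,
\]
the two Santal\'o integrals coincide, and dividing through by $\mathrm{vol}(S^{n-1})$ produces $\mathrm{vol}_{g_1|_{M_1}}(M_1) = \mathrm{vol}_{g_2|_{M_2}}(M_2)$.

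The main obstacle is justifying Santal\'o's formula in our precise setting, since its classical formulation usually assumes strict convexity of $\d M$, while here the boundary may host tangent trajectories of higher multiplicity, subject only to the boundary generic constraint. The remedy is to derive the formula from a Fubini-type decomposition of the Liouville measure $\mu_L$ on $SM_i$ along the traversing geodesic flow: the locus $\d_2(SM_i)$ where $\langle v,\nu_i\rangle_{g_i}=0$ is a closed subset of $\d(SM_i)$ of $\sigma_i$-measure zero, and outside it the pair $(w,t)$, with $w\in\d_1^+(SM_i)\setminus\d_2(SM_i)$ and $t\in[0, l_{g_i|_{M_i}}(w)]$, provides global flow-box coordinates on a full-measure subset of $SM_i$. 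In these coordinates $\mu_L$ pulls back to $\langle v,\nu_i\rangle_{g_i}\, d\sigma_i\wedge dt$, and integrating in $t$ against the normalization $\mu_L(SM_i) = \mathrm{vol}(S^{n-1})\cdot\mathrm{vol}_{g_i|_{M_i}}(M_i)$ yields the displayed identity. With that formula in hand, the change-of-variables computation above completes the proof.
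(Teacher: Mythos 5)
Your argument is correct, but it takes a genuinely different route from the paper. The paper proves the corollary as a straightforward consequence of its Cut~\&~Scatter Theorem~\ref{th9.11}: from the conjugated scattering maps and matching length data it first produces a smooth strong conjugacy $\Xi: SN_1 \to SN_2$ of the geodesic flows on the \emph{closed} manifolds $N_i$, then invokes Croke--Kleiner (\cite{CK}, Proposition~1.2) to conclude $\mathrm{vol}(N_1)=\mathrm{vol}(N_2)$, and finally subtracts $\mathrm{vol}(U_1)=\mathrm{vol}(U_2)$, which are equal because $\psi$ is an isometry. You instead stay entirely on the compact manifolds with boundary and read the volume off directly from the lens data via Santal\'o's formula, using the Liouville measure's invariance and a Fubini decomposition over $\d_1^+(SM_i)$; the change of variables via $\Psi^\d$ then transfers the integral term by term. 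Your approach is more self-contained and elementary: it bypasses Theorem~\ref{th9.11} altogether (so it does not use the balance hypothesis, property~$\mathsf A$, or the passage to closed manifolds), at the modest cost of justifying Santal\'o's formula beyond the strictly convex case. Your remedy for that --- the glancing set $\d_2(SM_i)$ is a proper closed submanifold of $\d(SM_i)$ for a boundary generic metric, hence $\sigma_i$-null, and the flow-box pullback of $\mu_L$ is $\langle v,\nu_i\rangle\,d\sigma_i\wedge dt$ --- is standard and sound. The two proofs are, at heart, both Liouville-measure arguments, but you apply the measure-theoretic input directly to $M_i$ rather than routing it through the closed ambient manifolds and the Croke--Kleiner rigidity machinery; that makes the volume statement visible as an immediate consequence of the lens data, which is a clean conceptual gain.
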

\begin{proof} By Theorem \ref{th9.11},  the metrics $g_1$ and $g_2$ are geodesic flow strongly conjugate by a diffeomorphism $\Xi: SN_1 \to SN_2$ from the class $C^\infty(SN_1, SN_2)$. By \cite{CK}, Proposition 1.2, the manifolds $(N_1, g_1)$ and $(N_2, g_2)$ share the same volume. Since $\psi: U_1 \to U_2$ is an isometry, the manifolds $(M_1, g_1|_{M_1})$ and $(M_2, g_2|_{M_2})$ share the same volume as well.
\end{proof}
\begin{corollary}\label{cor9.14}  Let the pairs of smooth Riemannian  $n$-manifolds, $(N_1, g_1) \supset (M_1, g_1|)$ and $(N_2, g_2)   \supset (M_2, g_2|)$, be as in Theorem \ref{th9.11}. In addition, assume that $N_2$ admits a non-vanishing \emph{parallel} vector field $w$ \footnote{The existence of a parallel vector field is equivalent to $(N_2, g_2)$ being isometric to the quotient of the Riemannian product $L \times \R$ by the isometry subgroup $G \subset \mathsf{Iso}(L) \times \mathsf{Iso}_+(\R)$, where $L$ is a simply connected complete Riemannian manifold (\cite{CK}).}.

If the scattering maps $C_{v^{g_1|_{M_1}}}$ and $C_{v^{g_2|_{M_2}}}$  are conjugated  with the help of the diffeomorphism $\Psi^\d =_{\mathsf{def}} \Psi |_{\d(SU_1)}$ and if, for each $w \in \d_1^+(SM_1)$, the length data agree: $l_{g_2|_{M_2}}(\Psi^\d(w)) = l_{g_1|_{M_1}}(w)$, then the manifolds $(N_1, g_1)$ and $(N_2, g_2)$ are isometric.
\end{corollary}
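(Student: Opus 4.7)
\medskip

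\noindent\textbf{Proof proposal.} The plan is to feed the scattering/length hypotheses into Theorem \ref{th9.11} to upgrade them to a global smooth strong conjugacy of the geodesic flows on the \emph{closed} manifolds $SN_1$ and $SN_2$, and then to quote the Croke--Kleiner rigidity theorem for closed Riemannian manifolds that admit a parallel vector field (see \cite{CK}).

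First, I would observe that every hypothesis of Theorem \ref{th9.11} is already part of the present corollary: the metrics $g_i|_{M_i}$ are boundary generic of the gradient type and balanced, $g_2|_{M_2}$ satisfies property $\mathsf{A}$, and the scattering maps $C_{v^{g_1|_{M_1}}}$ and $C_{v^{g_2|_{M_2}}}$ are conjugated by $\Psi^\d = \Psi|_{\d(SU_1)}$ in a way that also matches the length data. Applying Theorem \ref{th9.11} directly produces a diffeomorphism $\Xi : SN_1 \to SN_2$, of class $C^\infty$, satisfying
\[
\Xi \circ \phi_1^t \;=\; \phi_2^t \circ \Xi \qquad \text{for all } t \in \R,
\]
where $\phi_i^t$ denotes the geodesic flow of $(N_i, g_i)$. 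In particular, $g_1$ and $g_2$ are geodesic flow \emph{strongly} conjugate (in the sense of Definition \ref{def9.4}) via a smooth diffeomorphism between the closed manifolds $SN_1$ and $SN_2$.

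Next, I would invoke the rigidity result of Croke and Kleiner \cite{CK}. That theorem, in the formulation suitable here, asserts: if $(N_2, g_2)$ is a closed Riemannian manifold admitting a non-vanishing parallel vector field $w$ (equivalently, $(N_2, g_2)$ is isometric to a quotient of a Riemannian product $L \times \R$ by a discrete isometry subgroup of $\mathsf{Iso}(L) \times \mathsf{Iso}_+(\R)$), and if $(N_1, g_1)$ is a closed Riemannian manifold whose geodesic flow is time-preservingly conjugate to that of $(N_2, g_2)$, then $(N_1, g_1)$ and $(N_2, g_2)$ are isometric. Since the conjugacy $\Xi$ delivered above is strong (it intertwines the flows $\phi_1^t$ and $\phi_2^t$ for all $t$), this hypothesis is met, and the conclusion of the corollary follows.

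The principal obstacle I anticipate is not a conceptual one but a bookkeeping one: checking that the form of geodesic flow conjugacy guaranteed by Theorem \ref{th9.11} matches exactly the regularity and time-preservation hypothesis demanded by the Croke--Kleiner theorem as it is stated in \cite{CK}. Since Theorem \ref{th9.11} already supplies a smooth, globally time-preserving conjugacy on the full unit tangent bundle, this matching is automatic; hence no additional geometric argument beyond the invocation of \cite{CK} is required. The only subtlety worth spelling out is the observation that the parallel field $w$ on $N_2$ pulls back via $D\Xi$ (together with the time-preservation of $\Xi$) to a flow-invariant horizontal distribution on $SN_1$ compatible with the $L \times \R$-structure, which is precisely the input Croke and Kleiner use to force the isometry.
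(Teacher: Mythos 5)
Your argument matches the paper's own proof: apply Theorem \ref{th9.11} to obtain a smooth, time-preserving (strong) conjugacy $\Xi : SN_1 \to SN_2$ of the geodesic flows, then invoke Croke--Kleiner (Theorem 1.1 of \cite{CK}) for closed manifolds admitting a parallel vector field to conclude the isometry. The closing speculation about how the parallel field pulls back is unnecessary for the citation, but it does not affect the correctness of the proof.
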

\begin{proof} By Theorem \ref{th9.11},  the metrics $g_1$ and $g_2$ are geodesic flow strongly conjugate by a diffeomorphism $\Xi: SN_1 \to SN_2$ from the class $C^\infty(SN_1, SN_2)$. By \cite{CK}, Theorem 1.1, the manifolds $(N_1, g_1)$ and $(N_2, g_2)$ are isometric. 
\end{proof}

\begin{corollary}\label{cor9.15} Let $n \geq 3$. Let the pairs of Riemannian  $n$-manifolds, $(N_1, g_1) \supset (M_1, g_1|)$ and $(N_2, g_2) \supset (M_2, g_2|)$, be as in Theorem \ref{th9.11}. Assume, in addition, that $(N_2, g_2)$ is locally symmetric and of negative sectional curvature. 

If the scattering maps $C_{v^{g_1|_{M_1}}}$ and $C_{v^{g_2|_{M_2}}}$  are conjugated  with the help of the diffeomorphism $\Psi^\d =_{\mathsf{def}} \Psi |_{\d(SU_1)}$ and if, for each $w \in \d_1^+(SM_1)$, the length data agree: $l_{g_2|_{M_2}}(\Psi^\d(w)) = l_{g_1|_{M_1}}(w)$, then the spaces $(N_1, g_1)$ and $(N_2, g_2)$ are isometric. 
\end{corollary}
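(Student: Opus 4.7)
The plan is to chain Theorem \ref{th9.11} (the Cut $\&$ Scatter Theorem) with the Besson--Courtois--Gallot rigidity theorem quoted earlier in the paper. The corollary is essentially obtained by feeding the conclusion of the former into the hypothesis of the latter; no intermediate construction is needed.

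First, I verify that every hypothesis of Theorem \ref{th9.11} is in force, since the pairs $(N_1, g_1) \supset (M_1, g_1|_{M_1})$ and $(N_2, g_2) \supset (M_2, g_2|_{M_2})$ are taken \emph{as in} that theorem. In particular, $g_2|_{M_2}$ is boundary generic, of the gradient type, balanced, and satisfies property $\mathsf A$; the diffeomorphism $\Psi$ is the one induced by an ambient isometry $\psi: V_1 \to V_2$; the scattering maps $C_{v^{g_1|_{M_1}}}$ and $C_{v^{g_2|_{M_2}}}$ are conjugated by $\Psi^\d =_{\mathsf{def}} \Psi|_{\d(SU_1)}$; and the length functions $l_{g_i|_{M_i}}$ match under $\Psi^\d$. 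Thus Theorem \ref{th9.11} applies and produces a smooth diffeomorphism $\Xi: SN_1 \to SN_2$ that \emph{strongly} conjugates the geodesic flows on the closed manifolds $SN_1$ and $SN_2$, i.e.\ $\Xi \circ \phi_1^t = \phi_2^t \circ \Xi$ for all $t \in \R$, where $\{\phi_i^t\}$ denotes the geodesic flow on $SN_i$.

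Second, I invoke the Besson--Courtois--Gallot theorem. Since $\Xi$ is $C^\infty$, it is in particular a $C^1$-diffeomorphism conjugating the two geodesic flows. The target manifold $(N_2, g_2)$ is, by the additional hypothesis of the corollary, a closed locally symmetric Riemannian manifold of negative sectional curvature with $\dim N_2 = n \geq 3$. The Besson--Courtois--Gallot theorem (Theorem 1.3 of \cite{BCG}, recorded in Section 3 of the present paper) asserts that any Riemannian manifold whose geodesic flow is $C^1$-conjugate to that of such $(N_2, g_2)$ must in fact be isometric to $(N_2, g_2)$. Applied to $\Xi$, this at once yields an isometry between $(N_1, g_1)$ and $(N_2, g_2)$, which is exactly the conclusion of the corollary.

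The main obstacle, if any, is not substantive but notational: one must check that the strong $C^\infty$-conjugacy delivered by Theorem \ref{th9.11} is of the regularity required for Besson--Courtois--Gallot (it is, because $C^\infty \subset C^1$), and that both $N_1$ and $N_2$ are \emph{closed}, which is built into the statement of Theorem \ref{th9.11}. All the nontrivial geometric and analytic content has already been absorbed into Theorem \ref{th9.11} on one side and into \cite{BCG} on the other, so the argument is a direct composition of these two inputs with no additional work.
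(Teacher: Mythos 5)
Your proposal reproduces the paper's own argument: apply Theorem \ref{th9.11} to get a $C^\infty$ (hence $C^1$) diffeomorphism $\Xi: SN_1 \to SN_2$ strongly conjugating the geodesic flows, then invoke the Besson--Courtois--Gallot rigidity theorem. The one place you diverge from the paper's proof is at the very end: the author, invoking BCG, first concludes only that $(N_1, c\cdot g_1)$ is isometric to $(N_2, g_2)$ for some constant $c>0$, and then uses the hypothesis that $\psi\colon U_1 \to U_2$ (and hence $\psi\colon V_1\to V_2$) is an honest isometry to force $c=1$. Since the conjugacy from Theorem \ref{th9.11} is time-preserving and the version of BCG quoted in the paper already concludes isometry with no scale factor, your more direct reading is defensible; but you should at least note that if one quotes the rigidity theorem in its scale-ambiguous form, the presence of the ambient isometry $\psi$ on an open set is exactly what eliminates the conformal constant, and this is the step the paper records explicitly.
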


\begin{proof} By Theorem \ref{th9.11}, the $C^1$-diffeomorphism  $\Xi: SN_1 \to SN_2$ conjugates the geodesic flows on $SN_1$ and $SN_2$. Since $(N_2, g_2)$ is a locally symmetric space of negative sectional curvature, the Besson-Courtois-Gallaot Theorem \ref{th9.2} applies; so we get that, for an appropriate  constant $c > 0$,  there exists an isometry $\chi: (N_1, c\cdot g_1) \to (N_2, g_2)$.  
Using that $\psi: U_1 \to U_2$ is an isometry, we conclude that $c = 1$.  
\end{proof}

\bigskip
{\it Acknowledgments:} The author is thankful to Christopher Croke and Gunther Uhlmann for very stimulating, informative conversations. He is also grateful  to Reed Meyerson for helping with the proof of Lemma 2.3.

\end{document}